\definecolor{hillencolor}{rgb}{0.66, 0.030, 0.30}
\theoremstyle{thmstyleone}%
\newtheorem{theorem}{Theorem}%  meant for continuous numbers
\newtheorem{proposition}[theorem]{Proposition}% 
\newtheorem{conjecture}[theorem]{Conjecture}%
\newtheorem{lemma}[theorem]{Lemma}%
\theoremstyle{thmstyletwo}%
\newtheorem{remark}{Remark}%
\theoremstyle{thmstylethree}%
\begin{document}

\title[Energy minima in nonlocal advection-diffusion models]{Detecting minimum energy states and multi-stability in nonlocal advection-diffusion models for interacting species}

% 
%Detecting stable states and multi-stability in multi-species nonlocal advection-diffusion models

%%=============================================================%%
%% Prefix	-> \pfx{Dr}
%% GivenName	-> \fnm{Joergen W.}
%% Particle	-> \spfx{van der} -> surname prefix
%% FamilyName	-> \sur{Ploeg}
%% Suffix	-> \sfx{IV}
%% NatureName	-> \tanm{Poet Laureate} -> Title after name
%% Degrees	-> \dgr{MSc, PhD}
%% \author*[1,2]{\pfx{Dr} \fnm{Joergen W.} \spfx{van der} \sur{Ploeg} \sfx{IV} \tanm{Poet Laureate} 
%%                 \dgr{MSc, PhD}}\email{iauthor@gmail.com}
%%=============================================================%%

\author*[1]{\fnm{Valeria} \sur{Giunta}}\email{v.giunta@sheffield.ac.uk}

\author[2]{\fnm{Thomas} \sur{Hillen}}\email{thillen@ualberta.ca}
%\equalcont{These authors contributed equally to this work.}

\author[2,3]{\fnm{Mark A.} \sur{Lewis}}\email{marklewis@uvic.ca}
%\equalcont{These authors contributed equally to this work.}

\author[1]{\fnm{Jonathan R.} \sur{Potts}}\email{j.potts@sheffield.ac.uk}
%\equalcont{These authors contributed equally to this work.}

\affil*[1]{\orgdiv{School of Mathematics and Statistics}, \orgname{University of Sheffield}, \orgaddress{\street{Hicks Building, Hounsfield Road}, \city{Sheffield}, \postcode{S3 7RH}, \country{UK}}}

\affil[2]{\orgdiv{Department of Mathematical and Statistical Sciences}, \orgname{University of Alberta}, \orgaddress{\city{Edmonton}, \postcode{T6G 2G1}, \state{Alberta}, \country{Canada}}}

\affil[3]{\orgdiv{Department of Mathematics and Statistics and Department of Biology},  \orgname{University of Victoria}, \orgaddress{\postcode{PO Box 1700 Station CSC}, \city{Victoria},  \state{BC}, \country{Canada}}}

% 150-250 words
\abstract{
Deriving emergent patterns from models of biological processes is a core concern of mathematical biology.  In the context of partial differential equations (PDEs), these emergent patterns sometimes appear as local minimisers of a corresponding energy functional. 
%attractors, with stable steady states being perhaps the simplest example.  Yet, for many systems of PDEs, it is not straightforward to determine either the spatial structure of stable states or how many exist for a given set of parameter values.  
Here we give methods for determining the qualitative structure of local minimum energy states of a broad class of multi-species nonlocal advection-diffusion models, recently proposed for modelling the spatial structure of ecosystems.  We show that when each pair of species respond to one another in a symmetric fashion (i.e. via mutual avoidance or mutual attraction, with equal strength), the system admits an energy functional that decreases in time and is bounded below.  This suggests that the system will eventually reach a local minimum energy steady state, rather than fluctuating in perpetuity.  We leverage this energy functional to develop tools, including a novel application of computational algebraic geometry, for making conjectures about the number and qualitative structure of local minimum energy solutions. These conjectures give a guide as to where to look for numerical steady state solutions, which we verify through numerical analysis. Our technique shows that even with two species, multi-stability with up to four classes of local minimum energy states can emerge. The associated dynamics include spatial sorting via aggregation and repulsion both within and between species.  The emerging spatial patterns include a mixture of territory-like segregation as well as narrow spike-type solutions. Overall, our study reveals a general picture of rich multi-stability in systems of moving and interacting species.
}

\keywords{Animal movement, energy functional, mathematical ecology, nonlocal advection, partial differential equation, stability}

%%\pacs[JEL Classification]{D8, H51}

\pacs[MSC Classification]{35B36, 35B38, 35Q92, 92D25, 92D40}

%MSC
%	35B36  	Pattern formations in context of PDEs
%	35B38  	Critical points of functionals in context of PDEs (e.g., energy functionals)
%	35Q92  	PDEs in connection with biology, chemistry and other natural sciences
%	92D25  	Population dynamics (general)
%	92D40  	Ecology

\maketitle

\section{Introduction}
%{\color{red}[JRP: There are some good things in this Introduction.  However, it does not do a good job of setting up the rationale behind looking at stable steady states, i.e. why this is an important and difficult question.  This is the purpose of the paper, so the Introduction should be focused on setting up this question.  I will have a go at re-drafting it.]}

A central purpose of mathematical biology is to provide a way of linking biological processes to emergent patterns \citep{Levin92, murray2001mathematical}.  In cell biology, such insights can illuminate the mechanisms behind the growth of cancerous tumours, and inform the development of interventions to slow or halt that growth \citep{altrock2015mathematics, byrne2010dissecting, painter2013mathematical}.  In ecology, the insights on mechanisms behind animal space use can be valuable for species conservation \citep{bellis2004home,macdonald2003modelling,zeale2012home}, ensuring maintenance of biodiversity \citep{hirt2021environmental, jeltsch2013integrating}, and controlling biological invasions \citep{hastings2005spatial, lewis2016mathematics, shigesada1997biological}. 

For partial differential equation (PDE) models of biological systems, one useful method to link process to pattern is to construct an energy functional for a system, if it exists.  Then the local minima of this energy functional give possible final configurations of the system.  Our focus here is to develop techniques for finding such local energy minima in a particular system of PDEs describing symmetric nonlocal multi-species interactions, with the parallel biological aim of being able to detect and describe the possible long-term patterns that may emerge from underlying processes.

The PDE system we focus on is a multi-species system of nonlocal advection diffusion equations recently introduced \citep{PL19} and slightly generalised by \cite{HPLG21}. This system models the spatial structure of ecosystems over timescales where births and deaths are negligible and has the following functional form
\begin{linenomath*}\begin{equation}\label{eq:system}
		\frac{\partial u_i}{\partial t}=D_i \Delta u_i+\nabla \cdot \left(u_i \sum_{j=1}^{N}\gamma_{i j} \nabla (K \ast u_j) \right), 
\end{equation}\end{linenomath*}
for $i\in \{1,\dots,N\}$, where $D_i$ and $\gamma_{ij}$ are constants, and $u_i(x,t)$ is the density of a species of moving organisms in location $x$ at time $t$.  Individuals detect the presence of others over a spatial neighborhood described by spatial averaging kernel $K$, which is a symmetric, non-negative function with $\|K\|_{L^1}=1$. % and $K \in L^{\infty}(\mathbb{T})$.  
The magnitude of $\gamma_{ij}$ gives the rate at which species $i$ advects towards (resp. away) from species $j$ if $\gamma_{ij}<0$ (resp. $\gamma_{ij}>0$).  Whilst the detection of individuals may be direct, e.g. through sight smell or sound, \cite{PL19} showed that the above formalism can also be used when interactions are mediated by marks in the environment or memory of past interactions. Note that, as well as modelling different species of organism, Equation (\ref{eq:system}) can also be used to model $N$ different groups within a species, or to describe more complex situations where organisms may be spatially delineated by something other than species, e.g. mixed-species territorial flocks of birds \citep{mokross2018can}.  However, we use the term `species' for simplicity.

Equation (\ref{eq:system}) generalises a variety of existing models.  In the case $N=1$ and $\gamma_{11}<0$, Equation \eqref{eq:system} is an aggregation-diffusion equation \citep{carrillo2018aggregation, carrillo2019nonlinear} and also arises in model of animal home ranges \citep{briscoe2002home}.  For $N=2$ and $\gamma_{12},\gamma_{21}>0$, Equation \eqref{eq:system} can be related to models of territory formation \citep{ellison2020mechanistic, potts2016territorial, rodriguez2020steady} and cell sorting \citep{burger2018sorting} (the latter also includes $\gamma_{12},\gamma_{21}<0$).  The case of arbitrary $N$ with $\gamma_{ij}=1$ has also been recently studied in the context of territories \citep{ellefsen2021equilibrium}.  Finally, the $N=2$ case with $\gamma_{12}$ and $\gamma_{21}$ having different signs has been studied in the context of predator-prey dynamics \citep{di2016nonlocal}.  So there is a wide range of possible applications arising from Equation \eqref{eq:system}.

Whilst our approach is quite general in potential applicability, there are various specific biological questions that might be addressed by classifying minimum energy solutions.  A simple example is that of animal territory formation. How much avoidance is necessary for segregated territories to form?  Is the emergence of territories history dependent?  Do symmetric avoidance mechanisms always lead to symmetric territories?  As another example, in the case of mutualistic species, we can ask similar questions.  How much attraction is necessary for aggregation?  Is it history dependent?  All of these questions can benefit from the insight provided by classifying minimum energy solutions to Equation \eqref{eq:system}, as well as more complex questions regarding multi-species questions that may exhibit a mixture of attraction and avoidance mechanisms.

The model given by Equation (\ref{eq:system}) has been shown to exhibit rich pattern formation properties, including aggregation, segregation, oscillatory patterns and non-periodic spatio-temporal solutions suggestive of strange attractors \citep{PL19}.  In \cite{PL19}, for the simple case where $N=2$, $\gamma_{ii}=0$, and $\gamma_{12}=\gamma_{21}$, an energy functional was constructed that is decreasing in time, bounded below, and becomes a steady state of Equation (\ref{eq:system}) as $t\rightarrow \infty$.  Furthermore, numerical experiments suggest that only stationary patterns emerge in this case \citep{PL19}.  Here, our first task is to generalise this $N=2$ energy functional to arbitrary $N$, but where $\gamma_{ij}=\gamma_{ji}$ for all $i,j,\in \{1,\dots,N\}$. 
Related work by \cite{Jungel2022} found two more energy functionals which are based on the Shannon entropy on the one hand and a Rao-like entropy on the other.  However, our focus here is on the generalization of the energy function from \cite{PL19}.

Once this energy functional has been constructed, our second task is to minimise it to ascertain the functional form of the local minimum energy solutions.  For this, we work in the local limit, i.e. where $K$ tends towards a Dirac-$\delta$ function.  We give a numerical technique for showing that, if we start with a class of stable steady state solutions for different $K$, then take the local limit, we return a piecewise constant function.  This technique makes use of the theory of Gr\"obner bases and associated methods from computational algebraic geometry.  It is a generalisation of a method first used in \citet{potts2016territorial}.  

In situations where the local limit is piecewise constant, local minima of the energy functional can be found by searching through the space of piecewise constant functions.  We show that this can sometimes be done analytically, using some basic examples in one spatial dimension to illustrate the methods.  Even in case $N=2$, this process reveals a range of situations where there are multiple local energy minima, all of which we verify via numerics away from the local limit.  Overall, the methods presented here enable users to detect local minimum energy states of Equation (\ref{eq:system}), including multiple minima, in any situation where $\gamma_{ij}=\gamma_{ji}$.

This paper is organized as follows.  We begin with linear stability analysis, in Section \ref{sec:linear_analysis}.  This sets the stage by showing that the $\gamma_{ij}=\gamma_{ji}$ case (for all $i,j$) leads to stationary pattern formation at small times (from perturbations of the homogeneous steady state) as long as the species have the same-sized populations. In Section \ref{sec:energy}, we construct an energy functional associated with Equation (\ref{eq:system}) in the case $\gamma_{ij}=\gamma_{ji}$ (for all $i,j$) and analyze its properties, particularly that it decreases in time and is bounded below. Noteably, unlike the linear analysis, this does not require the species to have the same-sized populations.  This section ends with a conjecture about the structure of the attractor, which is somewhat stronger than what we are able to show in this paper, but for which we have numerical evidence to suggest it might be true. In Section \ref{sec:struct}, we describe our technique for finding stable steady states, assuming that the local limit of stable steady states is piecewise constant, generalising a method used in \citet{potts2016memory}.  In Section \ref{sec:grobner}, we give a method for proving that this local limit is piecewise constant, demonstrating our proof for $N=2$ and arbitrary $\gamma_{ij}$, then for $N=3$ with specific examples of $\gamma_{ij}$.

\subsection{Notation and assumptions}\label{sec:assumptions}

We use the following notation conventions throughout. Let $S \subset \mathbb{R}^n$ be a measurable set. Then we denote the measure of $S$ by $\lvert S \lvert$, so that
\begin{linenomath*}\begin{equation}
	\label{eq:def_measure}
	\lvert S\lvert=\int_{S} \mathbf{1}(x) \text{ dx},
\end{equation}\end{linenomath*}
where $\mathbf{1}:\mathbb{R}^n \rightarrow \mathbb{R}$ is the constant function $\mathbf{1}(x)= 1$.

Let $\Omega\subset\mathbb{R}^n$ and $f:L^p(\Omega) \rightarrow \mathbb{R} $. We use the following norms
\begin{itemize}
	\item $ \lVert f \lVert_{L^p(\Omega)} =( \int_{\Omega} \lvert f\lvert ^p)^{1/p} $, where $ 1\leq p < \infty $,
	\item $ \lVert f\lVert_{L^{\infty}(\Omega)} = \inf \{C \geq 0 : \lvert f(x)\lvert  \leq C, \text{ a.e. in } \Omega \} $.
\end{itemize}
Let $ M \in \mathbb{N} $ and $ g=(g_1, g_2, \dots, g_M):(L^p(\Omega))^M \rightarrow \mathbb{R} $. Then we define
\begin{itemize}
	\item $ \lVert g\lVert_{(L^p(\Omega))^M} = \sum_{i=1}^{M}\lVert g_i \lVert_{L^p(\Omega)} $, where $ 1\leq p < \infty $,
	\item $ \lVert g\lVert_{(L^{\infty}(\Omega))^M} = \max_{i=1, 2, \dots, M} \{\lVert g_i\lVert_{L^{\infty}(\Omega)}\} $.
\end{itemize}
To ease notation, we usually write $  \lVert g \lVert_{L^{p}(\Omega)} $ instead of $  \lVert g\lVert_{(L^{p}(\Omega))^M} $, if the meaning is clear from the context.  We also may drop explicit dependence on $\Omega$.

We analyze Equation \eqref{eq:system} on the spatial domain $ \Omega=[0,L_1]\times[0,L_2]\times\dots\times[0,L_n] \subset \mathbb{R}^n $, for $n\geq1$, with periodic boundary conditions
\begin{linenomath*}\begin{equation}\label{eq:pbc}
		\begin{aligned}
		&	u_i (x_1,\dots,x_N,t)\lvert_{x_j=0}=u_i (x_1,\dots,x_N,t)\lvert_{x_j=L_j},\\
		&	\partial_{x_j}u_i (x_1,\dots,x_N,t)\lvert_{x_j=0}=\partial_{x_j}u_i (x_1,\dots,x_N,t)\lvert_{x_j=L_j},
		\end{aligned}\end{equation}
\end{linenomath*}
for all $ i=1,\dots, N $, $ j=1, \dots, n $ and $ t \geq0 $. A spatial domain with these periodic boundary conditions is a torus and we denote it by $ \mathbb{T} $. For the kernel $K$ we assume that $K\in L^s (\mathbb{T})$ with $s=\frac{m}{2}$ for $m\geq 2$ and $s=1$ for $m=1$.  For the non-local terms in Sections \ref{sec:energy} and \ref{sec:struct} (but not Sections \ref{sec:linear_analysis} and \ref{sec:grobner}), we assume a detailed balance  for all $i,j \in \{1,\dots,N\}$, i.e. $\gamma_{ij}=\gamma_{ji}$. Finally, in Sections \ref{sec:struct} and \ref{sec:grobner} we assume $n=1$.
%\begin{equation}\label{assumptiongamma}
%\gamma_{ij}=\gamma_{ji}. 
%\end{equation}

\section{Linear stability analysis}\label{sec:linear_analysis}

Inhomogeneous solutions of PDEs can emerge when a change in a parameter causes the loss of stability of a homogeneous steady state, leading to the formation of inhomogeneous solutions (sometimes referred to as Turing patterns after \citet{turing1952chemical}),  which can be either stationary or periodically oscillating in time. In this Section, we will analyze the linear patterns supported by Equation \eqref{eq:system}.% First, we will identify its homogeneous steady states and then we will examine the situations in which they are linearly stable or unstable to spatially inhomogeneous perturbations.

In Equation \eqref{eq:system}, the total mass of each species $i$ is conserved in time, indeed on the periodic domain $ \mathbb{T} $, on which conditions \eqref{eq:pbc} hold, the following identities are satisfied
\begin{linenomath*}\begin{equation}
		\frac{d}{dt} \int_{\mathbb{T}} u_i(\mathbf{x},t)  \text{d}\mathbf{x}=0, \qquad \text{ for } i=1, \dots, N,
\end{equation}\end{linenomath*}
where $ \mathbf{x}=(x_1, x_2, \dots, x_N)  \in \mathbb{T}$. Hence, for all $ i=1, \dots, N $, 
\begin{linenomath*}\begin{equation}\label{eq:int_cond}
p_i:=\int_{\mathbb{T}} u_i (\mathbf{x},t) \text{d}\mathbf{x}= \int_{\mathbb{T}} u_i(\mathbf{x},0) d \mathbf{x}, \text{ for all }  t \geq0,
\end{equation}\end{linenomath*}
where the constant $ p_i $ is the population size of species $i$. Therefore, Equation \eqref{eq:system} has an homogeneous steady state
\begin{linenomath*}\begin{equation}\label{equilibrium}
		\mathbf{\bar{u}}=(\bar{u}_1, \bar{u}_2, \dots, \bar{u}_N), \quad \text{ where }	\bar{u}_i=\frac{p_i}{\lvert{\mathbb{T}\lvert}}, \text{ for } i=1, \dots, N ,
\end{equation}\end{linenomath*}
unique for each value of $p_i$ (determined by the initial condition). To study the stability of $ \mathbf{\bar{u}}$, we introduce the vector
\begin{linenomath*}\begin{equation}\label{eq:w}
		\mathbf{w}= (u_1 - \bar{u}_1, \dots, u_N-\bar{u}_N) = \mathbf{u}^{(0)} e^{\lambda t + i \bm{\kappa} \cdot \mathbf{x}},
\end{equation}\end{linenomath*}
where $\mathbf{u}^{(0)} $ is a constant vector, $ \lambda \in \mathbb{R} $ is the growth rate of the perturbation, $ \mathbf{x}=(x_1, \dots, x_n) \in \mathbb{T} $ and $ \bm{\kappa}=(\kappa_1, \dots, \kappa_n) $ is the wave vector, whose components are the wave numbers of the perturbation and must satisfy the boundary conditions \eqref{eq:pbc}.  We thus have 
\begin{linenomath*}\begin{equation}
		\kappa_i=\frac{2 \pi q_{i}}{L_i}, \text{ with } q_i \in \mathbb{N}, \text{ for } i=1, \dots, n.
\end{equation}\end{linenomath*}
Substituting Equation \eqref{eq:w} into Equation \eqref{eq:system} and neglecting nonlinear terms, we obtain the following eigenvalue problem
\begin{linenomath*}\begin{equation}
		\lambda(\bm{\kappa}) \mathbf{w} = \lvert\bm{\kappa}\lvert^2 \mathcal{L}(\bm{\kappa}) \mathbf{w}
\end{equation}\end{linenomath*}
where
\begin{linenomath*}\begin{equation}\label{eq:L}
		\mathcal{L}(\bm{\kappa})= \begin{bmatrix}
			-D_1 -\gamma_{11} \bar{u}_1 \hat{K}(\bm{\kappa}) & -\gamma_{12} \bar{u}_1 \hat{K}(\bm{\kappa})  & \dots  & -\gamma_{1N} \bar{u}_1 \hat{K}(\bm{\kappa}) \\
			&  &  &  \\
			-\gamma_{21} \bar{u}_2 \hat{K}(\bm{\kappa}) & -D_2 -\gamma_{22} \bar{u}_2 \hat{K}(\bm{\kappa})  & \dots & -\gamma_{2N} \bar{u}_2 \hat{K}(\bm{\kappa})  \\
			
			\vdots & & & \\
			-\gamma_{N1}\bar{u}_N \hat{K}(\bm{\kappa}) & -\gamma_{N2} \bar{u}_N \hat{K}(\bm{\kappa}) & \dots & -D_N -\gamma_{NN} \bar{u}_N \hat{K}(\bm{\kappa})
		\end{bmatrix},
\end{equation}\end{linenomath*}	
and where
\begin{linenomath*}
$$ \hat{K}(\mathbf{\bm{\kappa}})= \int_{\mathbb{R}^n} K(\mathbf{x}) e^{-i  \bm{\kappa} \cdot \mathbf{x}} \text{d}\mathbf{x} $$
\end{linenomath*}
is the Fourier transform of the kernel $ K $. 

%JRP: This paragraph and the next repeat the same message
%The stability of the homogeneous steady state $ \mathbf{\bar{u}} $ is governed by the eigenvalues of the matrix $ \mathcal{L(\bm{\kappa})} $.  If $ \Re{\lambda(\bm{\kappa})}<0 $ for all eigenvalues $ \lambda $ of $ \mathcal{L} $ and for all $ \bm{\kappa} $, then $ \mathbf{\bar{u}} $ is linearly stable and Turing patterns cannot arise; on the other hand, if for a given $ \bm{\kappa} $ there exists at least one eigenvalue $ \lambda(\bm{\kappa}) $ with positive real part, the homogeneous steady state becomes linearly unstable and spatial perturbations of wavenumber $\bm{\kappa} $ will grow at short times.  

For each $ \bm{\kappa} $, the eigenvalue with greatest real part (called the dominant eigenvalue) determines whether or not non-constant perturbations of the constant steady state at wavenumber $\bm{\kappa}$ will grow or shrink at short times.  If the dominant eigenvalue has positive real part and non-zero imaginary part, then these perturbations oscillate in time as they emerge. If the dominant eigenvalue is real, such oscillations will not occur at short times.

%	If there exist conditions on the parameters such that there is at least a wavenumber $ \kappa $ whose dominant eigenvalue has positive real part then this is called a Turing instability condition. This allows us to identify a critical threshold on the parameter values beyond which the homogeneous equilibrium loses stability and, as a consequence, the system generates imhomogeneous patterns at small times. 

% [JRP: is this necessary?] If $ \gamma_{i j}=0 $ for all $ i,j=1,2, \dots, N $, then $ \mathcal{L} $ admits $ N $ negative eigenvalues, $ \lambda_i=-D_i $.  Consequently, the homogeneous solution $  \mathbf{\bar{u}} $ is linearly stable. Therefore we will henceforth assume that there exists at least a $ \gamma_{ij}\neq0 $, for some $ i,j=1,2, \dots, N $.

Now, if $ \bar{u}_i= \bar{u}_j$ and $ \gamma_{i j}=\gamma_{ji}$ for all $ i,j=1,2, \dots, N$ then $ \mathcal{L} $ is symmetric, so all its eigenvalues are real \citep{Artin}. Therefore  non-constant perturbations of the constant steady state will not oscillate at short times. In practice, situations where the dominant eigenvalue is real and positive are often accompanied by non-constant stable steady states.  Although this does not follow by necessity \citep{GLS21}, this observation nonetheless suggests that the this case provides a good starting point in searching for non-constant stationary patterns. 

In the following sections, we will study the $ \gamma_{i j}=\gamma_{ji} $ case through an energy functional analysis, showing how this can give us insights into the structure of non-constant stable steady states.  It turns out that for this analysis, we do not need the additional assumption $ \bar{u}_i= \bar{u}_j$.

%Before focusing definitively only on the symmetric case, we point out that the system is also able to generate temporal oscillatory patterns and for these solutions to occur it is therefore necessary that $ \gamma_{i j} \neq \gamma_{ji} $ for some $ i,j=1,2, \dots, N $.

%We conclude this Section observing that deriving a general Turing instability threshold with a generic number $ N $ of populations is not possible. Instead, here we write explicitly the eigenvalues of the system for the case $ N=2 $, as it will be useful for the analysis that we will perform in the next sections.

We conclude this section by analysing the $ N=2 $ case in detail, to provide some results required in later sections.  In this case, the characteristic polynomial of the matrix $ \mathcal{L} $ is
\begin{linenomath*}\begin{align}\label{eq:characteristic_poly}
		P(\lambda)=& \lambda^2+((\gamma_{1 1}\bar{u}_1+\gamma_{22}\bar{u}_2)\hat{K}(\bm{\kappa})+(D_1 + D_2))\lambda +(\gamma_{1 1} \gamma_{2 2}-\gamma_{1 2} \gamma_{21})\bar{u}_1 \bar{u}_2\hat{K}(\bm{\kappa})^2
		\\&+(D_1 \gamma_{2 2}\bar{u}_2+D_2 \gamma_{1 1}\bar{u}_1)\hat{K}(\bm{\kappa})+D_1 D_2,
\end{align}\end{linenomath*}
whose roots are
\begin{linenomath*}\begin{align}\label{eq:eigenvalues}\nonumber
		&\lambda^{\pm}(\bm{\kappa})=\frac{1}{2}\left[-(\gamma_{11}\bar{u}_1+\gamma_{22}\bar{u}_2)\hat{K}(\bm{\kappa})-(D_1 +D_2) \pm\left(((\gamma_{11}\bar{u}_1-\gamma_{22}\bar{u}_2)^2 \right.\right.
		\\& \left.\left. +4 \gamma_{12}\gamma_{21}\bar{u}_1 \bar{u}_2)\hat{K}(\bm{\kappa})^2+2(D_1-D_2)(\gamma_{11}\bar{u}_1-\gamma_{22}\bar{u}_2)\hat{K}(\bm{\kappa})+(D_1-D_2)^2\right)^{1/2}\right],
\end{align}\end{linenomath*}
giving the eigenvalues of $ \mathcal{L} $. The condition $ \gamma_{12}=\gamma_{21} $ ensures that the argument of the square root is always positive and therefore the eigenvalues $ \lambda^{\pm} $ are real.	As a concrete  example, if $ p_1=p_2=1 $, $ L_1=\dots=L_N=1 $, $ D_1=D_2 $, $ \gamma_{1 2}=\gamma_{2 1} $ and $ \gamma_{1 1}=\gamma_{2 2} $ then the system admits a linear instability if there exists at least one $ \bm{\kappa} >0$ such that

\begin{linenomath*}\begin{equation}\label{eq:turingcondition}
		-\gamma_{11}\hat{K}(\bm{\kappa}) + \lvert\gamma_{12}\hat{K}(\bm{\kappa})\lvert>D_1.
\end{equation}\end{linenomath*}
%If the Turing condition in \eqref{eq:turingcondition} is verified for some $ \kappa $, then any small random perturbation of the homogeneous steady state $ \mathbf{\bar{u}} $ will evolve towards stationary spatial patterns with wavenumber $ 2\pi / \kappa $.

\section{Energy Functional}\label{sec:energy}

In this section, we will define an energy functional associated to Equation \eqref{eq:system} with $\gamma_{12}=\gamma_{21}$, and show that it is continuous, bounded below, decreases in time, and that its stationary points coincide with those of Equation \eqref{eq:system}.  This gives evidence to suggest that Equation \eqref{eq:system} with $\gamma_{12}=\gamma_{21}$ will tend towards a steady state, 
%admits stable steady state solutions
which will be inhomogeneous in space if the constant steady state $\mathbf{\bar{u}}$ is linearly unstable.  %Indeed we conjecture that all trajectories of this system will tend to a stationary stable steady state, but were ultimately unable to prove this.

During this section, we will assume a positivity result, namely that $ u_i(x,0)>0 $ implies $ u_i(x,t)>0 $, for all $ i=1, \dots, N $, for all $ t>0 $. This result has been already proved in one spatial dimension \citep{HPLG21}. This proof relies on a Sobolev embedding theorem only valid in one dimension, so other tools will be needed to give a proof in arbitrary dimensions.Indeed, at the time of writing, this positivity result has not yet been established in arbitrary dimensions. 

First, we re-write Equation \eqref{eq:system} as follows
\begin{linenomath*}\begin{equation}\label{eq:system2}
		\frac{\partial u_i}{\partial t}=\nabla \cdot \left[u_i \nabla \left(D_i \text{ln}(u_i)+\sum_{j=1}^{N}\gamma_{i j}  K \ast u_j \right)\right], \, i=1,\dots, N.
\end{equation}\end{linenomath*}
Then we define the following energy functional
\begin{linenomath*}\begin{equation}\label{eq:energy}
		E[u_1, \dots, u_N]=\int_{\mathbb{T}} \sum_{i=1}^{N} u_i \left(D_i \text{ln}(u_i)+\frac{1}{2}\sum_{j=1}^{N}\gamma_{i j} K \ast u_j\right) dx,
\end{equation}\end{linenomath*}
where $ x=(x_1, x_2, \dots, x_n) $. 
%\textcolor{hillencolor}
{The first term $\sum D_i u_i \ln u_i$ is the entropy of each of the populations on their own and the second term $\sum \gamma_{ij} (K\ast u_j) u_i$ denotes the interaction energy between the populations (\cite{carrillo2020}).} The factor $\frac{1}{2}$ before the sum is required so that we can leverage the $\gamma_{ij}=\gamma_{ji}$ symmetry later on.

\begin{proposition}\label{pr:Econtinuous} %Let $\lVert K\lVert_{L^1}<\infty $.
	%Suppose $ \lvert \lvert K\lvert \lvert _{L^1}<\infty $ {\color{red}[JRP: we should state this when we introduce our model in section 2]}. 
	The energy functional $ E $, defined in Equation \eqref{eq:energy}, is a continuous function of the variables $ u_1, u_2, \dots, u_N $.	
\end{proposition}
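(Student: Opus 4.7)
The plan is to decompose $E = E_{\mathrm{ent}} + E_{\mathrm{int}}$, where
$$E_{\mathrm{ent}}[\mathbf{u}] = \sum_{i=1}^{N} D_i \int_{\mathbb{T}} u_i \ln u_i \, dx$$
collects the pure entropy terms and
$$E_{\mathrm{int}}[\mathbf{u}] = \frac{1}{2}\sum_{i,j=1}^{N} \gamma_{ij}\int_{\mathbb{T}} u_i \, (K\ast u_j) \, dx$$
is the bilinear interaction energy, and then to prove continuity of each piece separately on the class of non-negative densities uniformly bounded in $L^\infty(\mathbb{T})$, which is consistent with the positivity hypothesis invoked at the start of this section.

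First I would treat the interaction term by exploiting its bilinearity. Adding and subtracting a cross term yields
$$u_i(K\ast u_j) - v_i(K\ast v_j) = (u_i - v_i)(K\ast u_j) + v_i\bigl(K\ast (u_j - v_j)\bigr).$$
H\"older's inequality, combined with Young's convolution inequality $\|K\ast w\|_{L^q} \leq \|K\|_{L^s}\|w\|_{L^r}$ for exponents satisfying $1 + 1/q = 1/s + 1/r$, then produces a bound of the form $|E_{\mathrm{int}}[\mathbf{u}] - E_{\mathrm{int}}[\mathbf{v}]| \leq C\,\|\mathbf{u} - \mathbf{v}\|$, where $C$ depends on $\|K\|_{L^s}$ and on fixed bounds for $\mathbf{u}$ and $\mathbf{v}$, and the exponents are chosen compatibly with the hypothesis $K \in L^s(\mathbb{T})$ stated in Section \ref{sec:assumptions}. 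Thus the interaction term is in fact locally Lipschitz, and continuity is immediate.

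For the entropy term I would rely on the fact that $h(x) = x\ln x$ is continuous on $[0,\infty)$ with $h(0)=0$, and hence uniformly continuous on any bounded interval $[0, M]$. Given a uniform $L^\infty$ bound on the densities, this uniform continuity lets us make $|u_i \ln u_i - v_i \ln v_i|$ arbitrarily small pointwise by shrinking $\|u_i - v_i\|_{L^\infty}$; integrating over the bounded torus $\mathbb{T}$ then transfers the pointwise control to the integral, yielding continuity of $E_{\mathrm{ent}}$.

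The hard part will be the behaviour of $h$ near the origin: because $h'(x) = \ln x + 1$ blows up as $x \to 0^+$, one cannot hope for a global Lipschitz bound on $E_{\mathrm{ent}}$, so the clean functional-analytic argument that worked for $E_{\mathrm{int}}$ does not apply. The workaround is to fall back on uniform continuity of $h$ on compact subsets of $[0, \infty)$ rather than on a differential bound, which is enough once a uniform $L^\infty$ envelope is imposed on the densities under consideration. This is compatible with the positivity result cited from \cite{HPLG21} in one spatial dimension and, more generally, with the physically meaningful regime of non-negative, bounded densities in which the proposition is intended.
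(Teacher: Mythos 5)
Your proposal is correct, and for the interaction term it is essentially the paper's own argument: both hinge on Young's convolution inequality together with H\"older's inequality to show that $u\mapsto K\ast u$ is Lipschitz, whence the bilinear term is locally Lipschitz in the densities. The genuine divergence is in the entropy term. The paper dismisses $u_i\mapsto u_i\ln u_i$ as the product of continuous functions and then passes to the integral via $\lvert G(u)-G(v)\rvert\leq\lVert g(u)-g(v)\rVert_{L^p}$; you instead note, correctly, that $h(x)=x\ln x$ is not Lipschitz near $0$, and replace any differential bound by uniform continuity of $h$ on a compact interval $[0,M]$, at the price of imposing a uniform $L^\infty$ envelope on the densities and measuring closeness in the $L^\infty$ norm. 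This is more careful than the paper's treatment --- continuity of the Nemytskii map $u\mapsto u\ln u$ between $L^p$ spaces is not as automatic as the paper suggests --- but it proves a formally weaker statement: continuity with respect to the (stronger) $L^\infty$ topology, and only on uniformly bounded sets, neither restriction appearing in the proposition nor being supplied by the positivity result you invoke, since positivity gives a lower bound on the densities, not an upper one. Because the paper never fixes the topology in which $E$ is claimed continuous, this is a defensible variant rather than a gap; just flag the $L^\infty$ bound as an added hypothesis rather than a consequence of positivity.
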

\begin{proof}
	First we show that the following functions are continuous as long as $u_i$ is positive across space and time %(which we have assumed)
	\begin{linenomath*}\begin{align}
			\label{eq:continuity1}
			&u_i\longmapsto u_i \ln(u_i), \\
			\label{eq:continuity2}
			& (u_i ,u_j)\longmapsto u_iK\ast u_j.
	\end{align}\end{linenomath*}
	Equation \eqref{eq:continuity1} is continuous since it is the product of continuous functions.	For Equation \eqref{eq:continuity2}, we first observe that if $ K \in L^1 $ and $ u \in L^p $, with $1\leq p\leq \infty $, then
	\begin{linenomath*}\begin{equation}\label{eq:prop_conv}
			\lVert K\ast u \lVert_{L^p}  \leq \lVert K\lVert_{L^1} \lVert u\lVert_{L^p},
	\end{equation}\end{linenomath*}
	by Young's convolution inequality.  Moreover, since $ K \ast u - K\ast v= K\ast(u-v) $, we have
	\begin{linenomath*}\begin{equation}\label{eq:lips}
			\lVert K\ast u -K \ast v\lVert_{L^p} = \lVert K \ast (u-v) \lVert_{L^p} \leq \lVert K\lVert_{L^1} \lVert u-v \lVert_{L^p}=\lVert u-v \lVert_{L^p},
	\end{equation}\end{linenomath*}
	where the last equality uses $\lVert K \lVert_{L^1}=1$.
	Equation \eqref{eq:lips} shows that $ u\mapsto K\ast u $  is a Lipschitz function and thus a continuous function. Therefore Equation \eqref{eq:continuity2} is continuous because it is the product of continuous functions.  This shows that the integrand in Equation \eqref{eq:energy} is continuous.
	
	Now let $ 1\leq p\leq \infty $ and $ g:L^p(\Omega)\rightarrow L^p(\Omega) $ be a continuous function. Define a function $ G: L^p(\Omega)\rightarrow\mathbb{R} $ by
	\begin{linenomath*}\begin{equation}
			G(u)=\int_\Omega g(u) dx.
	\end{equation}\end{linenomath*}
	It remains to show that $G$ is continuous.	To this end, let $\epsilon>0$ and $ u \in L^p(\Omega) $.  Then since $g$ is continuous, there exists $\delta_{\epsilon}>0$ such that for any $v \in L^p(\Omega)$ with $\lVert v-u\lVert_{{L^p}}<\delta_{\epsilon}$, we have $\lVert g(v)-g(u)\lVert_{L^{p}}<\epsilon$.  Since 
	$ \lvert G(u)-G(v)\lvert \leq \lVert g(u)-g(v)\lVert_{L^{p}}$ for all $u, v \in L^{p}(\Omega)$, we have $\lvert G(v)-G(u)\lvert\leq \lVert g(v)-g(u)\lVert_{L^{p}}<\epsilon$. 
\end{proof}
\begin{remark} Note that whilst we have used $\lVert K \lVert_{L^1}=1$, the previous proposition also holds for any $K \in L^1$.\end{remark}

\begin{proposition}\label{pr:E_non_incr}
	Suppose $ \gamma_{ij}=\gamma_{ji} $, for all $ i,j=1, \dots, N $. For any positive (for each component) initial data $  (u_{1,0}, \dots, u_{N,0})$, the energy functional $ E[u_1(x,t), u_2(x,t), \dots, u_N(x,t)] $ is non-increasing over time, where $ (u_1, u_2, \dots, u_N) $ is the trajectory of Equation \eqref{eq:system} starting from $ (u_{1,0}, \dots, u_{N,0}) $. Moreover, if $E$ is constant then we are at a steady state of Equation \eqref{eq:system}.
\end{proposition}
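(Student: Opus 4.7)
The plan is to differentiate $E$ in time, use the flux form \eqref{eq:system2} to rewrite $\partial_t u_i = \nabla \cdot (u_i \nabla \mu_i)$ with the ``potential''
\begin{linenomath*}\begin{equation*}
\mu_i := D_i \ln(u_i) + \sum_{j=1}^{N} \gamma_{ij} K \ast u_j,
\end{equation*}\end{linenomath*}
and then recognise $\tfrac{d}{dt}E$ as an integral of $\partial_t u_i \cdot \mu_i$, which integration by parts turns into a manifestly non-positive dissipation.

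First I would carefully compute $\tfrac{d}{dt}E$ by differentiating under the integral. Applying the product rule to $u_i \ln u_i$ gives a $D_i \partial_t u_i (\ln u_i + 1)$ contribution; the $D_i$ part integrates to $D_i \tfrac{d}{dt}\int u_i \, dx = 0$ by the mass conservation identity \eqref{eq:int_cond}, so only $D_i \partial_t u_i \ln u_i$ survives. For the interaction term I would pick up two contributions, one from differentiating $u_i$ and one from differentiating $K \ast u_j$. On the latter, I would use Fubini together with the symmetry of $K$ (i.e.\ $\int f (K \ast g)\,dx = \int g (K \ast f)\,dx$) to move the time derivative onto $u_i$, then relabel $i \leftrightarrow j$ and invoke $\gamma_{ij}=\gamma_{ji}$ to see that these two contributions are equal. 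The factor $\tfrac{1}{2}$ in \eqref{eq:energy} is precisely what collapses them into a single sum, giving
\begin{linenomath*}\begin{equation*}
\frac{d}{dt} E = \int_{\mathbb{T}} \sum_{i=1}^{N} \mu_i \, \partial_t u_i \, dx.
\end{equation*}\end{linenomath*}

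Next, substituting \eqref{eq:system2} and integrating by parts, with the boundary terms vanishing thanks to the periodic boundary conditions \eqref{eq:pbc}, yields
\begin{linenomath*}\begin{equation*}
\frac{d}{dt} E = \int_{\mathbb{T}} \sum_{i=1}^{N} \mu_i \nabla \cdot (u_i \nabla \mu_i) \, dx = -\int_{\mathbb{T}} \sum_{i=1}^{N} u_i |\nabla \mu_i|^2 \, dx \le 0,
\end{equation*}\end{linenomath*}
where non-positivity uses the assumed positivity of each $u_i$ (the hypothesis highlighted before the proposition). This gives the non-increasing claim. For the second statement, if $E$ is constant in time then the dissipation integrand $u_i |\nabla \mu_i|^2$ vanishes almost everywhere; positivity of $u_i$ forces $\nabla \mu_i \equiv 0$, i.e.\ each $\mu_i$ is spatially constant, and then $\partial_t u_i = \nabla \cdot (u_i \nabla \mu_i) = 0$, so $(u_1,\dots,u_N)$ is a steady state of \eqref{eq:system}.

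The main obstacle I anticipate is purely technical: justifying differentiation under the integral and the integration by parts requires enough regularity of the trajectory (so that $\ln u_i$, $\nabla \mu_i$, and $u_i|\nabla \mu_i|^2$ are all integrable and boundary terms really do cancel on $\mathbb{T}$). The cleanest route is to invoke the well-posedness and regularity framework of \cite{HPLG21} together with the standing positivity assumption, and note that the symmetry manipulation $\int u_i (K \ast \partial_t u_j)\,dx = \int (K \ast u_i) \partial_t u_j \,dx$ rests on $K$ being symmetric (assumed from the outset) and on $\partial_t u_j \in L^1$. Once these are in place, the algebraic core of the argument — the cancellation enabled by $\gamma_{ij}=\gamma_{ji}$ and the squared-gradient form of the dissipation — goes through directly.
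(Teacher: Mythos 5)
Your proposal is correct and follows essentially the same route as the paper's proof: differentiate $E$, use the symmetry of $K$ and $\gamma_{ij}=\gamma_{ji}$ to collapse the interaction terms (exactly what the factor $\tfrac12$ is for), substitute the flux form \eqref{eq:system2}, and integrate by parts on $\mathbb{T}$ to obtain the non-positive dissipation $-\int_{\mathbb{T}}\sum_i u_i\lvert\nabla\mu_i\rvert^2\,dx$. The only (cosmetic) difference is that you discard the $D_i\,\partial_t u_i$ contribution via mass conservation, whereas the paper carries it along as an additive constant $D_i$ inside the potential, which is then killed by the gradient.
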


\begin{proof}
	Examining the time-derivative of the energy functional in Equation \eqref{eq:energy} gives
	\begin{linenomath*}\begin{equation}\label{eq:dEdt}
			\begin{aligned}		
				\frac{dE}{dt}=&\int_{\mathbb{T}} \sum_{i=1}^{N} \left[\frac{\partial u_i}{\partial t} \left(D_i \text{ln}(u_i)+\frac{1}{2}\sum_{j=1}^{N}\gamma_{i j} K \ast u_j\right)+u_i\left(\frac{D_i}{u_i}\frac{\partial u_i}{\partial t}+\frac{1}{2} \sum_{j=1}^{N}\gamma_{i j} K \ast \frac{\partial u_j}{\partial t}\right) \right] dx \\
				=&\int_{\mathbb{T}} \sum_{i=1}^{N} \left[\frac{\partial u_i}{\partial t} \left(D_i \text{ln}(u_i)+\frac{1}{2}\sum_{j=1}^{N}\gamma_{i j} K \ast u_j +D_i\right)+\frac{1}{2} \sum_{j=1}^{N}\gamma_{i j} \frac{\partial u_j}{\partial t} K \ast u_i   \right] dx \\
				=&\int_{\mathbb{T}}\left[ \sum_{i=1}^{N} \frac{\partial u_i}{\partial t} \left(D_i \text{ln}(u_i)+\frac{1}{2}\sum_{j=1}^{N}\gamma_{i j} K \ast u_j +D_i\right) +\frac{1}{2} \sum_{i,j=1}^{N}\gamma_{ji} \frac{\partial u_i}{\partial t} K \ast u_j \right]   dx \\
				=&\int_{\mathbb{T}}\left[ \sum_{i=1}^{N} \frac{\partial u_i}{\partial t} \left(D_i \text{ln}(u_i)+\frac{1}{2}\sum_{j=1}^{N}\gamma_{i j} K \ast u_j +D_i\right) +\frac{1}{2} \sum_{i,j=1}^{N}\gamma_{ij} \frac{\partial u_i}{\partial t} K \ast u_j \right]   dx \\
				=&\int_{\mathbb{T}}\left[ \sum_{i=1}^{N} \frac{\partial u_i}{\partial t} \left(D_i \text{ln}(u_i)+\sum_{j=1}^{N}\gamma_{i j} K \ast u_j +D_i\right) \right]   dx \\
				=&\int_{\mathbb{T}} \sum_{i=1}^{N} \nabla \cdot \left[u_i \nabla \left(D_i \text{ln}(u_i)+\sum_{j=1}^{N}\gamma_{i j}  K \ast u_j\right)\right] \left[ D_i\text{ln}(u_i)  +\sum_{j=1}^{N}\gamma_{ij} K \ast u_j +D_i \right] dx.
			\end{aligned}
	\end{equation}\end{linenomath*}
	Here, the second equality uses that $ \int_{\mathbb{T}} g (K\ast h) dx=\int_{\mathbb{T}} h (K\ast g)dx$ as long as $ K(x)=K(-x) $ for $ x\in \mathbb{R}^{n} $.  The fourth equality uses $ \gamma_{ij}=\gamma_{ji} $ and the sixth uses Equation \eqref{eq:system2}.
	
	Before continuing the computations in Equation \eqref{eq:dEdt}, we simplify notation by setting
	\begin{linenomath*}\begin{equation}
			f_i= D_i\text{ln}(u_i)  +\sum_{j=1}^{N}\gamma_{ij} K \ast u_j +D_i.
	\end{equation}\end{linenomath*}
	Observing that  
	\begin{linenomath*}\begin{equation}\label{eq:div_grad}
			\nabla \cdot (u_i \nabla f_i)= \sum_{h=1}^{n}	\partial_{x_h} (u_i \partial_{x_h}f_i),
	\end{equation}\end{linenomath*}
	we continue the previous computation to give
	\begin{linenomath*}\begin{equation}\label{eq:dEdt_2}
			\begin{aligned}		
				\frac{dE}{dt}=&\int_{\mathbb{T}} \sum_{i=1}^{N} \sum_{h=1}^{n} \partial_{x_h} \left(u_i \partial_{x_h} f_i \right)f_i dx\\
				=&\int_{\mathbb{T}} \sum_{i=1}^{N} \sum_{h=1}^{n} \left( \partial_{x_h} (u_i f_i \partial_{x_h} f_i) - u_i (\partial_{x_h} f_i)^2 \right) dx
				\\=& -\int_{\mathbb{T}} \sum_{i=1}^{N} \sum_{h=1}^{n} u_i  (\partial_{x_h} f_i)^2 dx\\
				=&-\int_{\mathbb{T}} \sum_{i=1}^{N} u_i \lvert \nabla f_i \lvert^2 dx \\
				=& -\int_{\mathbb{T}} \sum_{i=1}^{N} u_i \left\lvert\nabla \left(D_i \text{ln}(u_i)+\sum_{j=1}^{N}\gamma_{i j}  K \ast u_j\right) \right\lvert^2 dx \leq 0.
			\end{aligned}
	\end{equation}\end{linenomath*}
	The final inequality uses the assumption that $ u_i > 0 $. The second equality uses integration by parts. The third equality follows from the following equalities
	\begin{linenomath*}\begin{flalign}\nonumber
		\int_{\mathbb{T}}& \sum_{i=1}^{N} \sum_{h=1}^{n}  \partial_{x_h} (u_i f_i \partial_{x_h} f_i)\\\nonumber
		=& \int_{0}^{L_1} \int_{0}^{L_2 }\cdots \int_{0}^{L_n}  \sum_{i=1}^{N} \sum_{h=1}^{n}	\partial_{x_h} (u_i f_i \partial_{x_h}f_i)dx_1 dx_2\dots dx_n\\ \nonumber
		=& \int_{0}^{L_2 }dx_2\cdots \int_{0}^{L_n}dx_n  \left[\sum_{i=1}^{N} (u_i f_i \partial_{x_1}f_i)\right]_{x_1=0}^{x_1=L_1}\\ \nonumber
		&+\int_{0}^{L_1 }dx_1\cdots \int_{0}^{L_n}dx_n  \left[\sum_{i=1}^{N} (u_i f_i \partial_{x_2}f_i)\right]_{x_2=0}^{x_2=L_2}\\ \label{eq:pbc_int}
		&+\cdots +\int_{0}^{L_1 }dx_1\cdots \int_{0}^{L_{n-1}}dx_{n-1}  \left[\sum_{i=1}^{N} (u_i f_i \partial_{x_n}f_i)\right]_{x_n=0}^{x_n=L_n},
	\end{flalign}
	\end{linenomath*}
	and we observe that each term in Equation \eqref{eq:pbc_int} is equal to zero due to the periodic boundary conditions in Equation \eqref{eq:pbc}.
	
	Equation \eqref{eq:dEdt_2} shows that $ E $ is decreasing over time unless 
	\begin{linenomath*}\begin{equation}\label{eq:steady_state}
			\nabla \left(D_i \text{ln}(u_i)+\sum_{j=1}^{N}\gamma_{i j}  K \ast u_j\right) = 0, \text{ for all } i=1,\dots, N,
	\end{equation}\end{linenomath*}
	which is a steady state of Equation \eqref{eq:system2}, or equivalently of Equation \eqref{eq:system}. 
\end{proof}

\begin{remark}
	Proposition \ref{pr:E_non_incr} rules out the existence of non-stationary, time-periodic solutions. Indeed, as $ E $ is monotonic decreasing, if there exist $ t,\tau>0 $ such that $ E[u_1(x,t), \dots, u_N(x,t)] = E[u_1(x,t+\tau), \dots, u_N(x,t+\tau)] $, then $\dot{E}(t)=0$, so Equation \eqref{eq:steady_state} holds and $ (u_1(x,t), \dots, u_N(x,t)) $ is a stationary solution. 
\end{remark}

\begin{proposition}\label{pr:E_bounded_below}
	Let $  \lVert K\lVert_{L^{\infty}} <\infty $ 
	%{\color{red}[JRP: we should state this when we introduce our model in section 2]}
	and let $  (u_{1,0},u_{2,0}, \dots, u_{N,0}) \in L^1(\mathbb{T})^N $ be positive initial data and $ (u_1, u_2, \dots, u_N) $ be the trajectory of Equation \eqref{eq:system} starting from $(u_{1,0},u_{2,0}, \dots, u_{N,0}) $. Then $ E[u_1, u_2, \dots, u_N] $ is bounded below by a constant.
\end{proposition}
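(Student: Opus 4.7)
The plan is to split the energy into its entropy component and its interaction component, and bound each separately using only elementary facts together with mass conservation and Young's convolution inequality. Write
\begin{linenomath*}\begin{equation*}
E[u_1,\dots,u_N] = \sum_{i=1}^N D_i \int_{\mathbb{T}} u_i \ln u_i \, dx + \frac{1}{2}\sum_{i,j=1}^N \gamma_{ij}\int_{\mathbb{T}} u_i (K \ast u_j)\, dx.
\end{equation*}\end{linenomath*}

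For the entropy part, I would use the elementary pointwise bound $s \ln s \ge -1/e$ valid for all $s>0$ (the minimum of $s \ln s$ is attained at $s = 1/e$). Since positivity of $u_i$ is preserved in time (our standing assumption in this section) and $\mathbb{T}$ has finite measure, this gives $\int_{\mathbb{T}} u_i \ln u_i \, dx \ge -|\mathbb{T}|/e$ for each $i$, independent of $t$.

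For the interaction part, I would combine $\|K\|_{L^\infty}<\infty$ with Young's convolution inequality in the form $\|K\ast u_j\|_{L^\infty}\le \|K\|_{L^\infty}\|u_j\|_{L^1}$, and then use mass conservation (Equation \eqref{eq:int_cond}) to replace $\|u_j\|_{L^1}=p_j$, together with $\|u_i\|_{L^1}=p_i$, to estimate
\begin{linenomath*}\begin{equation*}
\left|\int_{\mathbb{T}} u_i (K\ast u_j)\, dx\right| \le \|u_i\|_{L^1}\|K\ast u_j\|_{L^\infty} \le p_i\, p_j\, \|K\|_{L^\infty}.
\end{equation*}\end{linenomath*}
Summing and using the triangle inequality bounds the interaction part below by $-\tfrac{1}{2}\|K\|_{L^\infty}\sum_{i,j}|\gamma_{ij}|p_i p_j$. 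Adding the two bounds yields a finite constant (depending on $D_i$, $\gamma_{ij}$, $p_i$, $\|K\|_{L^\infty}$ and $|\mathbb{T}|$ only) below which $E$ cannot drop.

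I do not anticipate any significant obstacle here: the entropy is uniformly bounded below simply because $|\mathbb{T}|$ is finite and $s\ln s$ has a finite minimum, and the interaction energy is bounded because convolution with an $L^\infty$ kernel is controlled by the conserved $L^1$ masses. The only thing to be careful about is that the entropy integrand is genuinely negative in the region where $u_i<1$, so one must use the pointwise lower bound $-1/e$ rather than the trivial bound $0$; this is the one step where a little attention is required.
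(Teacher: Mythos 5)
Your proposal is correct and follows essentially the same route as the paper's proof: the entropy term is bounded below via the pointwise inequality $s\ln s \ge -e^{-1}$ together with the finiteness of $\lvert\mathbb{T}\rvert$, and the interaction term is controlled by H\"older's and Young's convolution inequalities combined with mass conservation, yielding the same final constant $-e^{-1}\lvert\mathbb{T}\rvert\sum_i D_i - \tfrac{1}{2}\lVert K\rVert_{L^\infty}\sum_{i,j}\lvert\gamma_{ij}\rvert p_i p_j$.
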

\begin{proof}
We first observe that for all $ \gamma \in \mathbb{R} $, the following inequalities hold
\begin{linenomath*}\begin{align}\label{eq:prelim}\nonumber
			\int_{\mathbb{T}} \gamma u_i K\ast u_j dx 
			&\geq- \abs{\gamma} \int_{\mathbb{T}} \abs{u_i K\ast u_j} dx \\ \nonumber 
			&\geq  - \abs{\gamma} \lVert u_i\lVert_{1} \lVert K\ast u_j\lVert_{\infty} \\ 
			&\geq - \abs{\gamma} \lVert u_i\lVert_{L^{1}} \lVert K\lVert_{L^{\infty}} \lVert u_j\lVert_{L^{1}}.
	\end{align}\end{linenomath*}
The first inequality uses the fact that $ \gamma \geq -\abs{\gamma} $, for all $ \gamma \in \mathbb{R} $, the second uses H\"older's inequality and the third uses Young's convolution inequality. Moreover, since $ u_i>0 $, condition \eqref{eq:int_cond} ensures that $\lVert u_i(x,t)\lVert_{L^1}=p_i$ for all $t \geq 0$ and thus the right-hand side of Equation \eqref{eq:prelim} is finite.
	
Finally, by observing that $ \inf_{u_i > 0}\{u_i \text{ln}(u_i)\}=-e^{-1} $ and also by using Inequality \eqref{eq:prelim}, we obtain the following estimates 
\begin{linenomath*}\begin{equation}\label{eq:Ebounded_below}
	\begin{aligned}
		E[u_1, u_2, \cdots, u_N]&=\int_{\mathbb{T}} \sum_{i=1}^{N} u_i D_i \text{ln}(u_i) dx+\frac{1}{2}\int_{\mathbb{T}}\sum_{i,j=1}^{N}\gamma_{i j}u_i K \ast u_j dx \\
		& \geq - e^{-1} \lvert \mathbb{T} \rvert \sum_{i=1}^{N} D_i  -\frac{1}{2}  \lVert K\lVert_{L^{\infty}} \sum_{i,j=1}^{N}\lvert \gamma_{ij}\lvert \lVert u_i\lVert_{L^1} \lVert u_j\lVert_{L^1} \\
		& = - e^{-1} \lvert \mathbb{T} \rvert \sum_{i=1}^{N} D_i -\frac{1}{2} \lVert K\lVert_{L^{\infty}} \sum_{i,j=1}^{N}\lvert\gamma_{ij}\lvert p_i p_j,
	\end{aligned}
\end{equation}\end{linenomath*}
where the last equality uses the integral condition \eqref{eq:int_cond}. Thus $ E $ is bounded below.
\end{proof}

\begin{proposition}\label{pr:E_convergence}	
	Suppose $ \lvert \lvert K\lvert \lvert _{L^{\infty}} < \infty $ and $ \gamma_{ij}=\gamma_{ji} $, for all $ i,j=1, \dots, N $. For any positive initial data $  (u_{1,0}, \dots, u_{N,0}) \in L^1(\mathbb{T})^N $, there exists a constant $ l_{u_0} $, depending on $ u_0$, such that
	\begin{linenomath*}\begin{equation}
			\lim_{t\rightarrow \infty} E[u_{1}(x,t), \dots, u_{N}(x,t)] = l_{u_0},
	\end{equation}\end{linenomath*}
	where $ (u_{1}(x,t), \dots, u_{N}(x,t)) $ is the trajectory of Equation \eqref{eq:system} starting from $ (u_{1,0}, \dots, u_{N,0}) $.
\end{proposition}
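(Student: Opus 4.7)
The plan is to observe that Proposition \ref{pr:E_convergence} follows almost immediately by combining the two previous propositions with the classical monotone convergence theorem for real-valued functions of a real variable. Specifically, I would define $\varphi(t) := E[u_1(\cdot,t),\dots,u_N(\cdot,t)]$, viewed as a real-valued function on $[0,\infty)$, and verify that $\varphi$ has the two properties needed to guarantee a limit at infinity.

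First I would record that, by Proposition \ref{pr:E_non_incr}, since $\gamma_{ij}=\gamma_{ji}$ and the initial data are positive (so positivity is preserved along the trajectory by the assumption stated at the start of Section \ref{sec:energy}), the map $t\mapsto \varphi(t)$ is non-increasing. Next, I would invoke Proposition \ref{pr:E_bounded_below}: the hypothesis $\lVert K\lVert_{L^\infty}<\infty$ holds by assumption, and together with $\lVert u_i(\cdot,t)\lVert_{L^1}=p_i$ from the conservation identity \eqref{eq:int_cond} this gives the explicit lower bound
\begin{linenomath*}\begin{equation*}
\varphi(t) \geq -e^{-1}\lvert\mathbb{T}\rvert \sum_{i=1}^N D_i - \frac{1}{2}\lVert K\lVert_{L^\infty}\sum_{i,j=1}^N \lvert\gamma_{ij}\rvert p_ip_j =: m_{u_0},
\end{equation*}\end{linenomath*}
which is independent of $t$ and depends only on the initial data (through the masses $p_i$) and on the fixed quantities $D_i,\gamma_{ij},K,\mathbb{T}$.

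With these two properties in hand, the monotone convergence theorem for monotone bounded functions of a real variable applies: any non-increasing function $\varphi:[0,\infty)\to\mathbb{R}$ that is bounded below converges as $t\to\infty$ to $\inf_{t\geq 0}\varphi(t)$. Setting $l_{u_0} := \inf_{t\geq 0}\varphi(t)$ then yields the claim, and by construction $l_{u_0}\in[m_{u_0},\varphi(0)]$ depends on the initial datum $u_0 = (u_{1,0},\dots,u_{N,0})$, as stated.

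There is no real obstacle here; the hard analytical work has already been done in Propositions \ref{pr:E_non_incr} and \ref{pr:E_bounded_below}. The only thing to be slightly careful about is that the monotone convergence theorem requires $\varphi$ to be genuinely real-valued and finite on $[0,\infty)$, which follows from the continuity statement in Proposition \ref{pr:Econtinuous} together with the fact that the bounds derived in the proof of Proposition \ref{pr:E_bounded_below} also give finiteness of $\varphi(t)$ for each $t$ (the entropy term $u_i\ln u_i$ is integrable since $u_i\in L^1$ and $u_i\ln u_i$ is bounded below, while the interaction terms are finite by the same Young/Hölder estimate used in \eqref{eq:prelim}).
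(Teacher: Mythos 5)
Your proposal is correct and follows essentially the same route as the paper: both invoke Proposition \ref{pr:E_bounded_below} for the lower bound, Proposition \ref{pr:E_non_incr} for monotonicity, and then conclude that a non-increasing, bounded-below function of $t$ converges to its infimum $l_{u_0}$. Your additional remark on finiteness of $E$ at each time is a sensible (if minor) refinement the paper leaves implicit.
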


\begin{proof}
	Since $  \lVert K\lVert_{L^{\infty}} <\infty\ $, Prop. \ref{pr:E_bounded_below} ensures that the following set
	\begin{linenomath*}\begin{equation}\label{eq:setE}
			\{E[u_{1}(x,t), \dots, u_{N}(x,t)] : t \in \mathbb{R}^+\}
	\end{equation}\end{linenomath*}
	is bounded below.
	Due to the Completeness Axiom of the real numbers, the set in \eqref{eq:setE} has an infimum $ l_{u_0} $, which is determined by the initial condition $ u_0 $.
	Moreover, by Proposition \ref{pr:E_non_incr}, $ E $ is a non-increasing monotonic function of time, so tends to its infimum $ l_{u_0} $ as $ t\rightarrow \infty $.
\end{proof}

Proposition \ref{pr:E_convergence} shows that for any initial data $  \mathbf{u}_0 \in L^1(\mathbb{T})^N $ the trajectory starting from $  \mathbf{u}_0 $ evolves over time towards a configuration that is a local minimiser of $ E $, with energy $E=l_{u_0}$.  We also observe that if $ E $ reaches the minimum value $ l_{u_0} $ at a finite time $ T $, then the trajectory becomes stationary. Indeed, if $ E(\mathbf{u}(T))=l_{u_0}  $ then $ E(\mathbf{u}(t))\equiv l_{u_0}  $ for all $ t \geq T $. Hence, the minimum at  $E= l_{u_0} $ corresponds to a steady state that is Lyapunov stable (i.e. any solution that starts arbitrarily close to the steady state will remain arbitrarily close).  However, it does not guarantee asymptotic stability (i.e. any solution that starts arbitrarily close to the steady state tend toward the steady state).
%We have therefore shown both the existence of asymptotically stable steady states and that they exist in the set of the minima of the energy functional.
In the next Section, we will propose a method to determine the structure of these minimum energy states of Equation \eqref{eq:system}.% through minimization of the energy functional. 

Finally, we note that the convergence of $ E $ towards a finite minimum value does not guarantee that every solution converges towards a steady state when $ \gamma_{ij}=\gamma_{ji} $, as opposed to fluctuating in perpetuity. Nevertheless, this is something we would like to establish. %However, as proved in the previous Section, if the initial condition is a small perturbation of the homogeneous state $ \mathbf{\bar{u}} $ \eqref{equilibrium}, then the solution converges towards a steady state, which can be either the homogeneous solution  $ \mathbf{\bar{u}} $ or a stationary pattern. [JRP: I'm not sure this is true: linear analysis only shows what happens at short times]
Indeed, in all our numerical investigations, both here (in Section \ref{sec:struct}) and in previous works \citep{PL19,HPLG21}, we have only every observed (numerically) stable steady state solutions emerging, and have never observed perpetually fluctuating solutions.
Therefore, we conclude this section formulating the following conjecture. This is left as an open problem, but one possible means of attack might be the via the $S^1$-equivariant theory of \citet{buttenschon2021non}, applied there to a single-species system with a similar (but not identical) non-local advection term.

\begin{conjecture}
	Let $ \lVert K\lVert_{L^{\infty}} < \infty $ and $ \gamma_{ij}=\gamma_{ji} $, for all $ i,j=1, \dots, N $. For any positive (for each component) initial datum $  u_0=(u_{1,0}, \dots, u_{N,0}) \in L^1(\mathbb{T})^N $, the corresponding solution to Equation \eqref{eq:system} converges towards a steady state.
\end{conjecture}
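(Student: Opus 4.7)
The plan is to combine the decreasing energy structure with parabolic regularity and a Lojasiewicz--Simon gradient inequality, in order to upgrade convergence of $E(\mathbf{u}(t))$ to a finite value into convergence of $\mathbf{u}(t)$ itself. First, Propositions \ref{pr:E_non_incr} and \ref{pr:E_convergence} together give the integrated dissipation bound
\begin{equation*}
\int_0^\infty \sum_{i=1}^N \int_{\mathbb{T}} u_i\,|\nabla f_i|^2 \,dx\,dt \;<\; \infty, \qquad f_i := D_i\ln u_i + \sum_{j=1}^N \gamma_{ij}\, K\ast u_j.
\end{equation*}
Writing \eqref{eq:system} as a drift-diffusion equation with smooth bounded drift $\sum_j\gamma_{ij}\nabla(K\ast u_j)$, mass conservation together with $\lVert K\rVert_{L^\infty}<\infty$ and the standing positivity hypothesis should allow standard parabolic $L^p$--$L^q$ bootstrapping to yield uniform-in-time bounds $u_i\in L^\infty(0,\infty;H^s(\mathbb{T}))$ for arbitrary $s$, and a uniform strict positivity $u_i\geq c>0$. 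Consequently the orbit $\{\mathbf{u}(\cdot,t):t\geq 0\}$ is precompact, say in $C(\mathbb{T})^N$.

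With precompactness in hand, a LaSalle-style argument identifies the $\omega$-limit set $\omega(\mathbf{u}_0)$: along any sequence $t_k\to\infty$ for which $\mathbf{u}(\cdot,t_k)\to\mathbf{u}^*$, the dissipation integral forces $\nabla f_i^*\equiv 0$, so every point of $\omega(\mathbf{u}_0)$ is a steady state of \eqref{eq:system} with energy equal to $l_{\mathbf{u}_0}$. The conjecture thus reduces to showing that $\omega(\mathbf{u}_0)$ is a singleton. For this I would establish a Lojasiewicz--Simon inequality of the form
\begin{equation*}
|E(\mathbf{u})-E(\mathbf{u}^*)|^{1-\theta} \;\leq\; C\,\bigl\lVert (f_i(\mathbf{u})-\langle f_i(\mathbf{u})\rangle)_{i=1}^N \bigr\rVert_{L^2},
\end{equation*}
valid for some $\theta\in(0,\tfrac{1}{2}]$ in a neighbourhood of $\mathbf{u}^*$, where the right-hand side controls the variational gradient of $E$ restricted to the affine manifold of fixed masses $\{\lVert u_i\rVert_{L^1}=p_i\}$. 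Because $E$ is the sum of a real-analytic local term ($u\ln u$) and an analytic quadratic nonlocal term, the abstract framework of Chill and of Haraux--Jendoubi applies once one verifies that the linearised gradient at $\mathbf{u}^*$ is Fredholm. Combined with the differential identity $\tfrac{d}{dt}(E-l_{\mathbf{u}_0})^\theta\leq -c\,\lVert\partial_t\mathbf{u}\rVert$, this yields $\int_0^\infty\lVert\partial_t\mathbf{u}\rVert\,dt<\infty$ and hence convergence of $\mathbf{u}(t)$ to a unique limit.

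The main obstacle is the translation invariance of \eqref{eq:system} on $\mathbb{T}$: every steady state $\mathbf{u}^*$ sits on an $n$-dimensional orbit of further steady states under spatial translation, so the Hessian of $E$ necessarily has a non-trivial kernel and the naive Lojasiewicz--Simon inequality cannot hold in the form above. The remedy is an equivariant version of the inequality, in which one quotients by the translation group and works on a slice transverse to the orbit of $\mathbf{u}^*$; this is precisely the spirit of the $S^1$-equivariant theory of \citet{buttenschon2021non} suggested in the excerpt. Extending that scalar one-dimensional machinery to the present multi-species $n$-dimensional setting, together with discharging the positivity hypothesis in dimensions $n\geq 2$ (which at present is only established in $n=1$), constitute the principal technical hurdles.
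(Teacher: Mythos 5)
This statement is a \emph{conjecture}, not a theorem: the paper explicitly leaves it as an open problem and offers no proof, only the remark that the $S^1$-equivariant theory of \citet{buttenschon2021non} might be a possible line of attack. So there is no paper proof to compare against, and your proposal should be judged as a research programme rather than a proof. As a programme it is sensible and aligns with the authors' own suggestion --- energy dissipation plus compactness plus a LaSalle argument plus a (necessarily equivariant) Lojasiewicz--Simon inequality is the standard route for gradient-flow-like systems, and you correctly identify the translation-invariance obstruction. But it is not a proof, and the gaps are not merely technical polish; they sit at every load-bearing step.

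Concretely: (i) the uniform-in-time $H^s$ bounds and, especially, the uniform strict positivity $u_i\geq c>0$ are asserted via ``standard bootstrapping'' but are genuinely open --- even finite-time positivity is only established for $n=1$ \citep{HPLG21}, and a time-uniform lower bound is delicate precisely because the expected limit states (Section \ref{sec:struct}) are segregated or spike-like profiles on which $u_i$ becomes very small or vanishes in the local limit; without $u_i\geq c$ the dissipation $\int u_i|\nabla f_i|^2$ does not control $\lVert\nabla f_i\rVert_{L^2}^2$ and the degenerate mobility undermines the whole gradient-inequality mechanism. (ii) The Lojasiewicz--Simon inequality you write down is for the $L^2$ gradient, whereas the dissipation identity \eqref{eq:dEdt_2} is that of a Wasserstein-type (mobility-weighted) gradient flow; the correct inequality must be proved in that metric, and the abstract analytic frameworks of Chill and Haraux--Jendoubi do not apply off the shelf because $u\ln u$ fails to be analytic where $u$ approaches zero and the relevant linearisation is degenerate there. (iii) The differential inequality $\frac{d}{dt}(E-l_{u_0})^\theta\leq -c\lVert\partial_t\mathbf{u}\rVert$ requires comparing $\lVert\partial_t\mathbf{u}\rVert$ with the dissipation, which again needs the positivity lower bound. (iv) The equivariant quotient is only named, not constructed, and extending the cited scalar one-dimensional machinery to $N$ species in $n$ dimensions is, as you yourself note, a principal hurdle. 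In short, you have reproduced (in somewhat more detail) the authors' own proposed strategy for a statement they could not prove; each of the conditional steps above would need to be discharged before this counts as a proof.
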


\section{A method to find minimum energy states}
\label{sec:struct}

In this section, we will propose a method to gain insight into the possible structures of minimum energy to Equation \eqref{eq:system}. We  build on methods first proposed in \cite[Section 3.4]{potts2016memory} and recent existence results of \cite{Jungel2022}. We work in one spatial dimension and assume the assumptions of Section \ref{sec:assumptions}.  

As shown in the previous section, the energy will always tends towards a local minimum, leading to a minimum energy state for the system, which is also a steady state. 
%the stable steady states of Equation \eqref{eq:system} are the minima for the energy functional $E$ in Equation \eqref{eq:energy}. Thus  by minimizing $ E $ we can gain information about the structure of the stable steady-state solutions.
When solving Equation \eqref{eq:system} for the top-hat kernel 
\begin{linenomath*}\begin{equation}\label{eq:top-hat}
		K_{\alpha}(x)= \begin{cases}
			\frac{1}{2 \alpha}, \quad  x\in [-\alpha, \alpha],\\
			0, \quad \text{ otherwise},
		\end{cases}
\end{equation}\end{linenomath*}	
numerically, we find that for decreasing $\alpha$,
 the asymptotic steady state solutions look increasingly like piece-wise constant functions, or the limit of arbitrarily narrow, arbitrarily high piece-wise constant functions, with single or multiple peaks. These structures become more singular as $\alpha\to 0$.   In Figure \ref{fig:SolutionsDifferentAlpha}, we see this for some simple examples. Note that as $ \alpha\to 0$, the top-hat kernel in Equation (\ref{eq:top-hat}) becomes a Dirac delta measure, and the model (\ref{eq:system}) becomes a local cross-diffusion model. Hence we call this limit $\alpha\to 0$ as the {\it local limit}.  

\cite{Jungel2022} derived a solution theory for non-smooth interaction kernels $K$, which includes the case of a top-hat kernel as in Equation (\ref{eq:top-hat}).  They consider Equation \eqref{eq:system} for the case where there are constants $\pi_i$ such that the matrix $(\pi_i\gamma_{ij})_{ij}$ is positive definite. For that case they showed global existence of weak solutions in Sobolev spaces. They also show a local-limit result. As $\alpha\to 0$ there exists a subsequence of solutions of Equation (\ref{eq:system}), with $K$ as in Equation (\ref{eq:top-hat}), that converge to a solution of the local version of Equation (\ref{eq:system}). The norm of this convergence varies depending on the space dimension. In $n=1$ we can use any $L^p$-norm and in dimensions $n\geq 2$ we use the $L^{\frac{n}{n-1}}$-norm. 
 These limits are piece-wise constant solutions, and spike solutions, depending on the sign of $\gamma_{ij}$. They arise as minimizers of the local version of the energy functional (Equation \eqref{eq:energy}), which is 
% \color{hillencolor}
 \begin{linenomath*}\begin{equation}\label{eq:energy-local}
		E_{\tiny{\mbox{local}}}[u_1, \dots, u_N]=\int_{\mathbb{T}} \sum_{i=1}^{N} u_i \left(D_i \text{ln}(u_i)+\frac{1}{2}\sum_{j=1}^{N}\gamma_{i j} u_j\right) dx,
\end{equation}\end{linenomath*}
where $ x=(x_1, x_2, \dots, x_n) $.
%\color{black}
Hence in the following we consider piece-wise constant energy minimizers, assuming that they are close to the minimizers of the non-local problem and we confirm this relation numerically.  We also focus on the $n=1$ case and write $L=L_1$ for simplicity.

\begin{figure}[H]
	\centering
	\subfloat[]
	{\includegraphics[width=.71\textwidth]{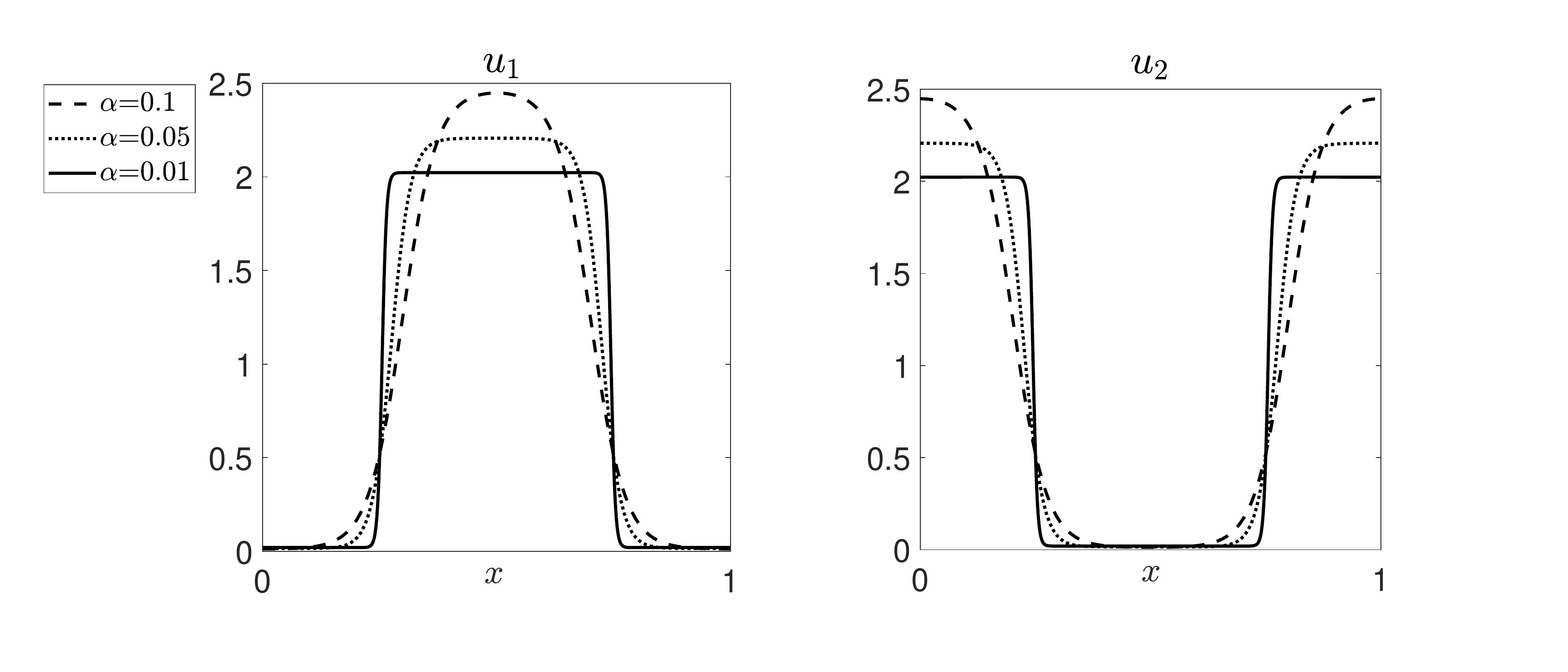}}
	\hspace{0.0cm}
	\subfloat[]
	{\includegraphics[width=.28\textwidth]{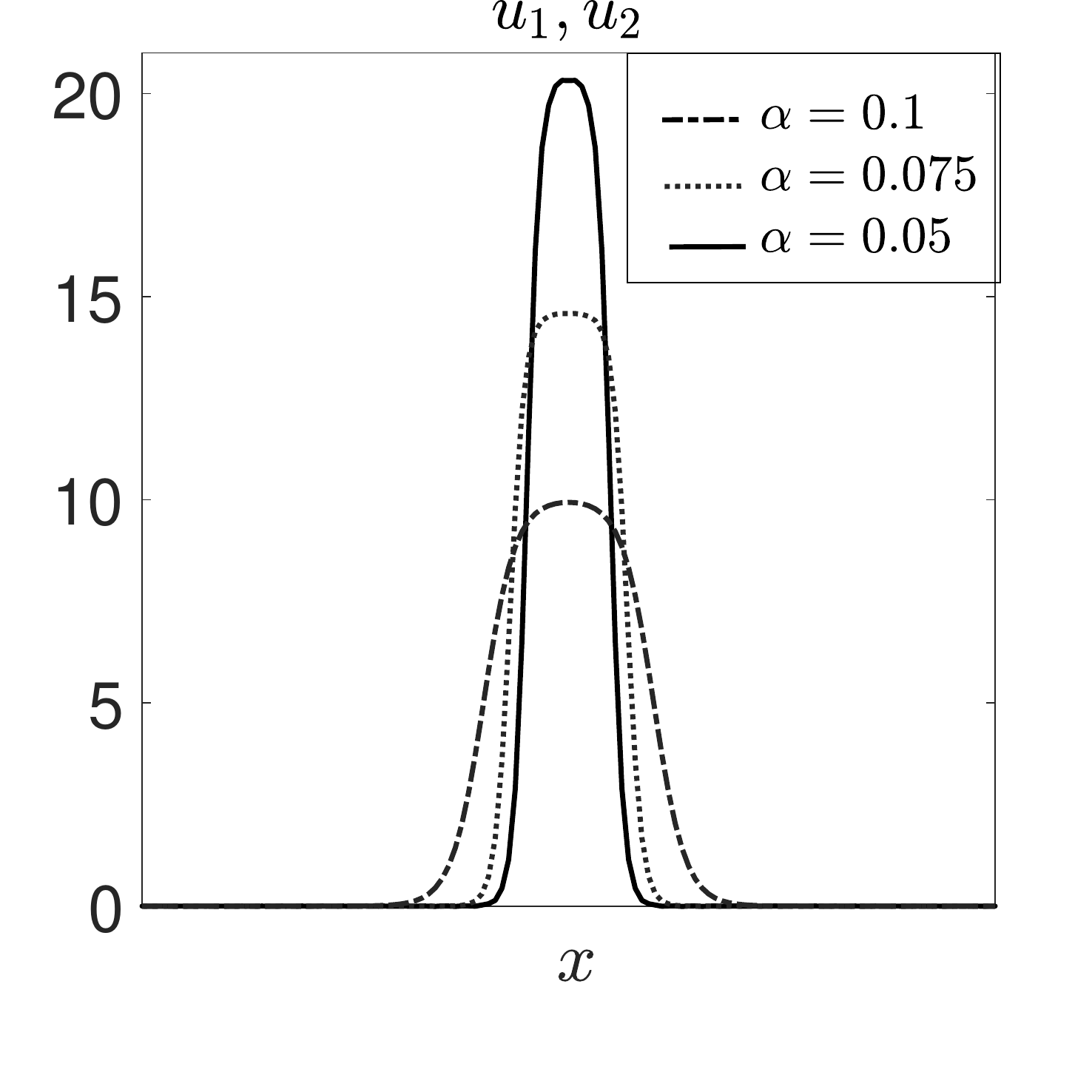}}
	\hspace{0.0cm}
	\subfloat[]
	{\includegraphics[width=.71\textwidth]{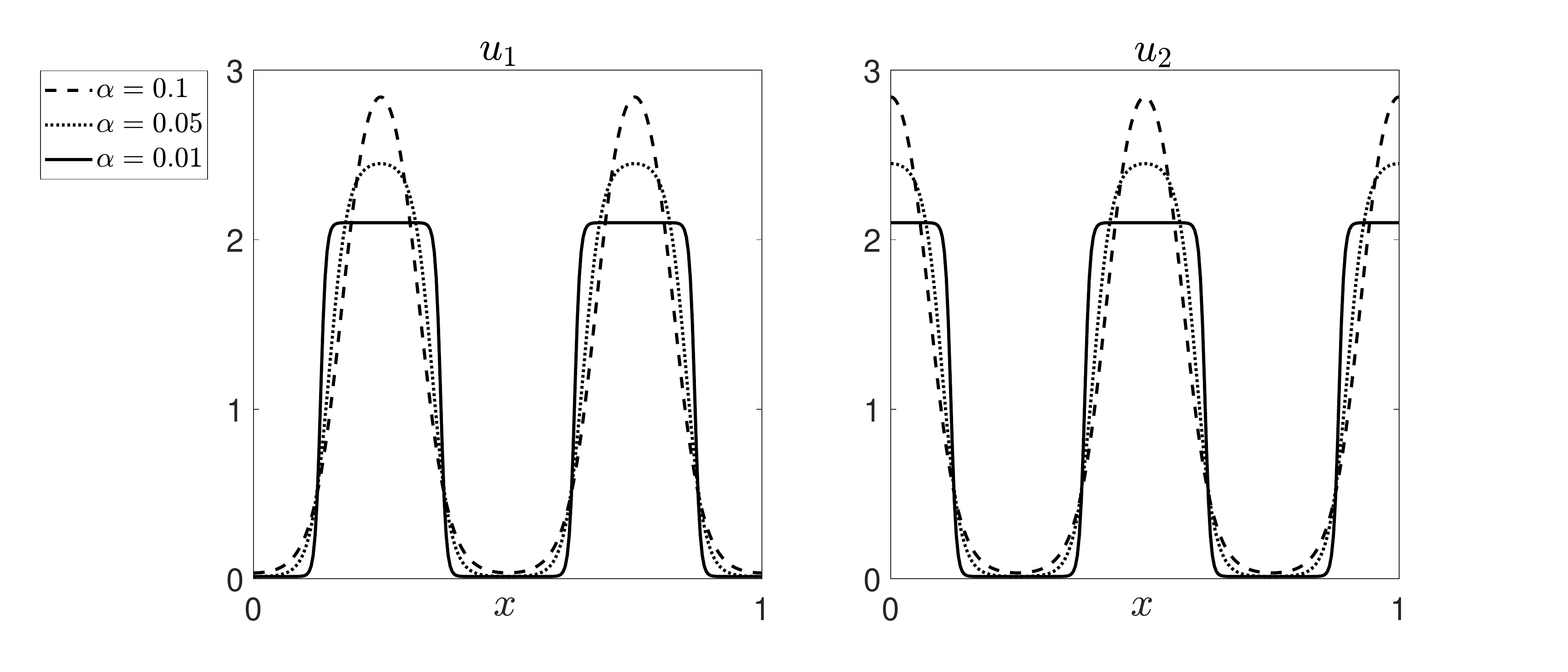}}
	\hspace{0.0cm}
	\subfloat[]
	{\includegraphics[width=.282\textwidth]{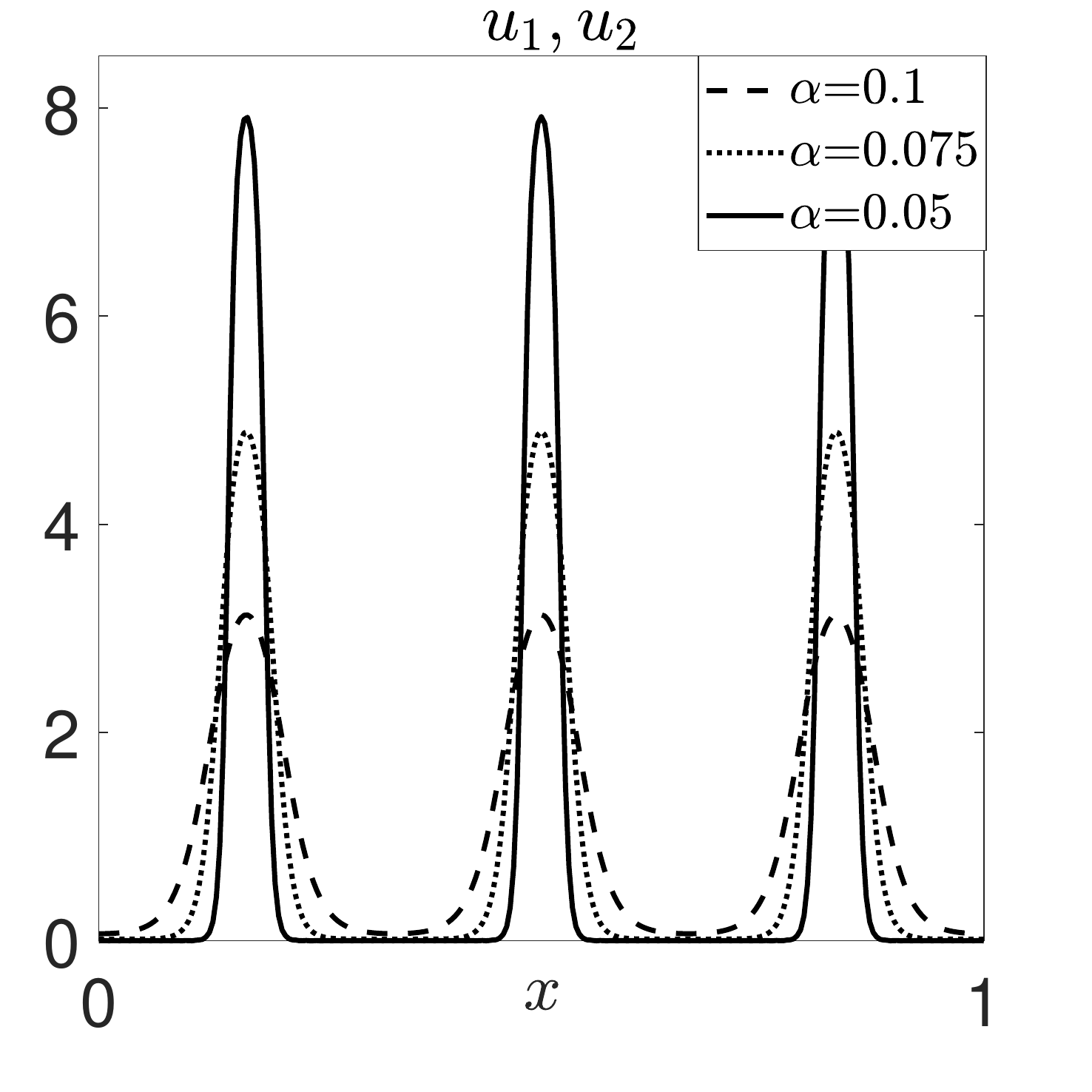}}
	\caption{Numerical steady solutions to Equation \eqref{eq:system}, with $ N=2 $, $ K= K_{\alpha}(x) $ (Equation \eqref{eq:top-hat}), for different values of $ \alpha $. As $ \alpha $ tends to zero, the solution appears to tend towards a piece-wise constant function (Panel (a) and (c)) or the limit of arbitrarily narrow, arbitrarily high piece-wise constant functions (Panel (b) and (d)).  The parameter values used in the simulations are $ D_1=D_2=1 $, $ p_1=p_2=1 $, $ \gamma_{11}=\gamma_{22}=0 $, $ \gamma_{12}=1.05 $ in Panel (a) and (c), $ \gamma_{12}=-1.05 $ in Panel (b) and (d)}
	\label{fig:SolutionsDifferentAlpha}
\end{figure}

We now explain our method in detail. First, Equation \eqref{eq:dEdt_2} in one dimension tells us that any minimum energy solution, $u_i (x)$, occurs when
\begin{linenomath*}\begin{align}
		\label{ssss}
		0= %\int_{\mathbb{T}} \sum_{i=1}^{N} 
		u_i 
		\left[\frac{\partial}{\partial x} \left(D_i \text{ln}(u_i )+\sum_{j=1}^{N}\gamma_{i j}  K \ast u_j \right) \right]^2,
		%dx. 
\end{align}\end{linenomath*}
for each $i \in \{1,\dots,N\}$.  Next we take the local limit of Equation (\ref{ssss}), which in the case $K=K_\alpha$ is the limit $ \alpha \rightarrow 0$.  In this limit, Equation (\ref{ssss}) becomes
\begin{linenomath*}\begin{align}
		\label{ssssl}
		0 = u_i \left[\frac{\partial}{\partial x} \left(D_i \text{ln}(u_i )+\sum_{j=1}^{N}\gamma_{i j} u_j \right) \right]^2.
\end{align}\end{linenomath*}
Therefore, either $u_i (x)=0$, or, for any subinterval on which $u_i (x) \neq 0$, there exists a constant $c_i \in \mathbb{R}$ such that
\begin{linenomath*}\begin{align}
		\label{ci2}
		c_i=D_i \text{ln}(u_i )+\sum_{j=1}^{N}\gamma_{i j}  u_j, \text{ for } i =1,\dots,N.
\end{align}\end{linenomath*}
In principle, there might exist infinitely many subintervals on which $u_i(x) \neq 0$, and $c_i$ may vary between these different subintervals. However, for each set of constants $c_1,\dots,c_N$, Equation \eqref{ci2} will typically have a finite number of common solutions (indeed, Section \ref{sec:grobner} shows how to determine whether we are in this `typical' situation). 

Therefore, on each subinterval $I$ in which $u_i(x) \neq 0$, there exists a finite set of values $u_{i1}^c, \dots, u_{ih}^c$, with $h \in \mathbb{N}$, satisfying Equation \eqref{ci2}, such that
\begin{linenomath*}\begin{align}
			\label{eq:uih}
			u_i (x)=\begin{cases}
				u_{i1}^c, &\mbox{for $x \in I_{i1}$}, \\
				\vdots & \\
				u_{ih}^c, &\mbox{for $x \in I_{ih}$}, \\
				%0, & \mbox{for $x \in \mathbb{T}$\textbackslash$\bigcup_{l=1}^{h}S_{ih}$},
			\end{cases}
	\end{align}\end{linenomath*}
where $I_{il}$, for $i=1,\dots,N$ and $l=1, \dots, h$, are disjoint subsets of $I$ such that $\cup_{l}I_{il}=I$ for each $i$.  By considering all such subintervals $I$ together, Equation (\ref{eq:uih}) defines a class of piece-wise constant functions on $[0,L]$.  The aim here is to examine which of these functions is a local minimum of the energy and satisfies all model assumptions.

%Hence, on the spatial domain $\mathbb{T}$, the functions $u_i(x)$, satisfying Equation \eqref{ssssl}, are piece-wise constant functions having the following form
%\begin{linenomath*}\begin{align}
%			\label{eq:uih1}
%			u_i (x)=\begin{cases}
%				u_{i1}^c, &\mbox{for $x \in S_{i1}$}, \\
%				u_{i2}^c, &\mbox{for $x \in S_{i2}$}, \\
%				\vdots & \\
%				0, & \mbox{for $x \in \mathbb{T}$\textbackslash$\left(\bigcup_{l} S_{il}\right)$},
%			\end{cases}
%	\end{align}\end{linenomath*}
%For each set of constants $c_1,\dots,c_N$, the set of possible values that $u_i (x)$ can take, to satisfy Equation (\ref{ci2}), will typically be finite (indeed, Section \ref{sec:grobner} shows how to determine whether we are in this `typical' situation).  Therefore we start by calculating these solution sets. 
%We then examine which of these solutions (Equation \eqref{eq:uih1}) is both a local minimum of the energy and satisfies all model assumptions.

%minimise the energy across all collections of constants $c_1,\dots,c_N$ that give results that are consistent with our model assumptions (specifically, those related population size and domain width). 
%Let $ u_i^\ast $, for $i = 1,\dots,N $, be piece-wise constant functions solving \eqref{ci2}.	Thus, for each set of constants $c_1,\dots,c_N$, there is a finite set of solutions $ (u_1^*, \dots, u_N^*)  $ to Equations (\ref{ci2}), which can be used to determine the local limit of solutions to Equations (\ref{ci}).  

The general case is too complicated to deal with in one go, so we demonstrate our method on some simple examples for the case of two species, $N=2$.
%, assuming that the constants $c_i$ on each subinterval are the same for any given $i$.   Hence we look for solutions $(u_1^*,u_2^*)$ to the following equations
%\begin{linenomath*}\begin{align}
%		\label{ciN2}
%		c_1= D_1 \text{ln}(u_1^\ast)+\gamma_{1 1} u_1^\ast+\gamma_{1 2} u_2^\ast,  \\ \label{ciN22}
%		c_2= D_2 \text{ln}(u_2^\ast)+\gamma_{2 2} u_2^\ast+\gamma_{1 2} u_1^\ast,
%\end{align}\end{linenomath*}
%where we have used $\gamma_{12}=\gamma_{21}$. We use the notation $u_i^*$ to demarcate the solutions to Equations \eqref{ciN2} and \eqref{ciN22}, which are constants, from the steady states $u_i (x)$, which are functions of $x$.
We start by studying the case  $\gamma_{1 1} =\gamma_{22}=0 $, so there is neither self-attraction nor self-repulsion.  We split this analysis further into the cases of mutual avoidance ($\gamma_{12}>0$) and mutual attraction ($\gamma_{12}<0$).  Then we analyze the case where $\gamma_{1 1}, \gamma_{22} \neq 0 $.

\subsection{The case $\mathbf{\gamma_{1 1} =\gamma_{22}=0 }$ with mutual avoidance, $\mathbf{\gamma_{12}=\gamma_{21}>0}$ }\label{sub:1}

\subsubsection{Analytic results in the local limit}
\label{sec:analytic1}

{\noindent}Minimising the energy over the full class of functions given by Equation (\ref{eq:uih}) turns out to be too complicated.  However, our numerics (see Figure \ref{fig:SolutionsDifferentAlpha}) suggest that the local limit  (i.e. $\alpha \rightarrow 0$ in the case $K=K_\alpha$) of any solution to Equation \eqref{eq:system} is a function of the following form
\begin{linenomath*}\begin{align}
		\label{eq:ui}
		u_i (x)=\begin{cases}
			u_{i}^c, &\mbox{for $x \in S_{i}$}, \\
			0, & \mbox{for $x \in [0,L]$\textbackslash$S_{i}$},
		\end{cases}
\end{align}\end{linenomath*}
where $ u_{i}^c \in \mathbb{R}^{+} $ and $S_{i}$ are subsets of $[0,L]$, for $i\in\{1,2\}$. Therefore we restrict our search by looking for the minimisers of the energy (Equation \eqref{eq:energy}) in the class of piece-wise constant functions defined as in Equation \eqref{eq:ui}.
	
	By Equation (\ref{eq:int_cond}), in Equation \eqref{eq:ui} we require the following constraint 
	\begin{linenomath*}\begin{equation}\label{uic}
			u_{i}^c\lvert S_{i}\rvert  =p_i, \text{ for }i=1,2,
	\end{equation}\end{linenomath*}
	recalling from Equation \eqref{eq:def_measure} that $\lvert S\rvert $ denotes the measure of a set $S$, not the cardinality, and $p_i$ denotes the total population size of species $i$. We wish to find the solutions of the form in Equation (\ref{eq:ui}), subject to Equation (\ref{uic}), that are local minimisers of the energy, Equation \eqref{eq:energy}. Placing Equation \eqref{eq:ui} into Equation \eqref{eq:energy}, and taking the spatially-local limit (i.e. $\alpha \rightarrow 0$ in the case $K=K_\alpha$), gives 
	\begin{linenomath*}\begin{align}\nonumber
			E[u_1 ,u_2 ]=&\int_0^L \left( D_1 u_1 \ln(u_1) + D_2 u_2 \ln(u_2)+\gamma_{12} u_{1}  u_{2}  \right)dx \nonumber \\\nonumber
			=& \lvert S_{1}\lvert  D_1  u_{1}^c \text{ln}(u_{1}^c)+\lvert S_{2}\lvert  D_2  u_{2}^c \text{ln}(u_{2}^c)  + \gamma_{12} u_{1}^c u_{2}^c \lvert S_{1} \cap S_{2}\lvert \\ \label{eq:energyloc0}
			=& p_{1} D_1 \text{ln}(u_{1}^c)+p_{2} D_2 \text{ln}(u_{2}^c)  + \gamma_{12} u_{1}^c u_{2}^c \lvert S_{1} \cap S_{2}\lvert ,
	\end{align}\end{linenomath*}	
	where the first equality uses $\gamma_{12}=\gamma_{21}$, the second equality uses Equation \eqref{eq:ui} and the third equality uses Equation \eqref{uic}.
	
In Equation \eqref{eq:energyloc0}, notice that if we keep $ \lvert S_{1}\lvert  $ and $ \lvert S_{2}\lvert $ fixed whilst lowering $ \lvert S_{1} \cap S_{2}\lvert  $ then the energy decreases. Thus, if $ \lvert S_{1}\lvert+\lvert S_{2}\lvert  \leq L $, we can construct disjoint sets $S_{1}$ and $S_{2}$, and these will correspond to lower energy solutions than any pair of non-disjoint sets of equal measure. Furthermore, if $ \lvert S_{1}\rvert+\lvert S_{2}\rvert  > L $, we can construct sets $S_{1}$ and $S_{2}$, such that $\lvert S_1\cap S_2\rvert=\lvert S_1\rvert+\lvert S_2\rvert-L$ and these will correspond to lower energy solutions than any other pair of sets of equal measure. % {\color{blue}[VG: I think that when $ \lvert S_{1}\rvert+\lvert S_{2}\rvert  > L $, the equality $\lvert S_1\cap S_2\rvert=\lvert S_1\rvert+\lvert S_2\rvert-L$ is always true. Do you agree?]}  {\color{red}[JRP: No - e.g. $S_1=S_2=[0,2]$, $L=3$]} 
Therefore henceforth, when $ \lvert S_{1}\rvert+\lvert S_{2}\rvert  \leq L $, we will assume that $S_{1} \cap S_{2}=\emptyset$, and when $ \lvert S_{1}\rvert+\lvert S_{2}\rvert  > L $, we will assume that $\lvert S_1\cap S_2\rvert=\lvert S_1\rvert+\lvert S_2\rvert-L$. 
	
	To search for the local minimizers of the energy in Equation \eqref{eq:energyloc0}, we thus define
	\begin{linenomath*}\begin{equation}\label{eq:mathcalE}
			\mathcal{E}({u}_{1}^c,{u}_{2}^c)=
			\begin{cases}
				\sum_{i=1}^{2}  p_{i} D_i \text{ln}(u_{i}^c), & \text{ if }  \lvert S_{1}\rvert + \lvert S_{2}\rvert  \leq L , 
				\\
				\\
				\sum_{i=1}^{2}  p_{i} D_i \text{ln}(u_{i}^c) + \gamma_{12} u_{1}^c u_{2}^c (\lvert S_1\rvert+\lvert S_2\rvert-L) , &\text{ if } 	\lvert S_{1}\rvert + \lvert S_{2}\rvert > L.
			\end{cases}
	\end{equation}\end{linenomath*}
	To constrain our search, notice that Equation \eqref{uic} and $ \lvert S_i\lvert  \leq L $ imply that
	\begin{linenomath*}\begin{align}
			\label{ssssic}
			u_i^c=\frac{p_i}{\lvert S_i\lvert} \geq \frac{p_i}{\lvert L\lvert }, \text{ for } i=1,2.
	\end{align}\end{linenomath*}
	The region of the $(u_1^c,u_2^c)$-plane defined by Equation \eqref{ssssic} is shown as white region in Figure \ref{fig:Diagram}.
	Our strategy will be as follows. First we will look for the local minima of Equation \eqref{eq:mathcalE}, subject to Equation \eqref{ssssic}, in the case where $ \lvert S_1\rvert  + \lvert S_2\rvert  \leq L $. Then we will look in the region $ \lvert S_1\rvert  + \lvert S_2\rvert  > L $.  Combining these results will then give us a complete picture of the local minima of $\mathcal{E}(u_1^c,u_2^c)$. 
	
Starting with $ \lvert S_1\rvert  + \lvert S_2\rvert  \leq L$,  Equation \eqref{uic} shows that this case is equivalent to the following condition
\begin{linenomath*}\begin{equation}\label{eq:constr}
	\frac{p_1}{u_1^c}+\frac{p_2}{u_2^c}=\lvert S_1\rvert + \lvert S_2\rvert\leq L.
\end{equation}\end{linenomath*}
By analysing the partial derivatives of $\mathcal{E}(u_1^c,u_2^c)$ in the region of the $(u_1^c,u_2^c)$-plane defined by Equation \eqref{eq:constr}, we see that there are no critical points in this region. Furthermore, $\mathcal{E}(u_1^c,u_2^c)\rightarrow \infty$ as either $u_1^c \rightarrow \infty$ or $u_2^c\rightarrow \infty$.  Therefore minima in this region must lie on the boundary, $ {p_1}/{u_1^c}+{p_2}/{u_2^c}= L $, which is shown as solid black line in Figure \ref{fig:Diagram}. Analysis of the partial derivative of $\mathcal{E}(u_1^c,u_2^c)$ on this boundary shows that $ \mathcal{E}(u_1^c,u_2^c)$ has a unique minimum point, given by
\begin{linenomath*}\begin{equation}\label{eq:minimum}
	\mathcal{M}_S=(u_{1S}^c, u_{2S}^c):=\left(\frac{p_1 D_1+p_2 D_2}{D_1 L},\frac{p_1 D_1 +p_2D_2}{D_2 L}\right).
\end{equation}\end{linenomath*}
This is also a local minimum of the region defined by Equation \eqref{eq:constr}. This can be shown by performing a Taylor expansion of $ \mathcal{E}(u_1^c,u_2^c) $ about the point $\mathcal{M}_S$ in the region given by $p_1/u_1^c+p_2/u_2^c \leq L$. Since the slope of the tangent line to the curve $p_1/u_1^c+p_2/u_2^c  = L$ at the point  $ \mathcal{M}_S $  is $ -\frac{D_1^ 2 p_1}{D_2^2 p_2} $, we choose two arbitrarily small constants, $\epsilon$ and $\delta$, such that $D_1^ 2 p_1 \epsilon+D_2^2 p_2\delta\geq 0$ and then perform a Taylor expansion of $\mathcal{E}(u_1^c,u_2^c)$ in a neighbourhood of $ \mathcal{M}_S $, which shows that
    \begin{linenomath*}\begin{align} \nonumber
			\mathcal{E}(u_{1S}^c+\epsilon,u_{2S}^c+\delta)& \approx    \mathcal{E}(u_{1S}^c,u_{2S}^c)+ \partial_{u_1^c} \mathcal{E}(u_{1S}^c,u_{2S}^c) \epsilon + \partial_{u_2^c} \mathcal{E}(u_{1S}^c,u_{2S}^c)\delta\\ \nonumber
			&= \mathcal{E}(u_{1S}^c,u_{2S}^c)+ \frac{p_1 D_1}{u_{1S}^c}\epsilon + \frac{p_2 D_2}{u_{2S}^c}\delta \\ \nonumber
			&= \mathcal{E}(u_{1S}^c,u_{2S}^c) + \frac{L}{p_1 D_1+p_2 D_2}(D_1^ 2 p_1 \epsilon+D_2^2 p_2\delta)
			\\
			& \geq \mathcal{E}(u_{1S}^c,u_{2S}^c).
	\end{align}\end{linenomath*}
%Note that the choice $D_1^ 2 p_1 \epsilon+D_2^2 p_2\delta\geq 0$ ensures that we remain in the $\lvert S_1\lvert +\lvert S_2\lvert \leq L$ region in Figure \ref{fig:Diagram}.
Since $ \mathcal{M}_S $ lies on the boundary curve $ \lvert S_1\lvert  + \lvert S_2\lvert = L $ (Figure \ref{fig:Diagram}), we have so far only established that it is a minimum of the region where $ \lvert S_1\lvert  + \lvert S_2\lvert \leq L $.  We now need to find out whether it is a minimum for the whole admissible region (the white region in Figure \ref{fig:Diagram}).  

To this end, we perform a Taylor expansion of $\mathcal{E}(u_1^c,u_2^c)$ in a neighbourhood of $ \mathcal{M}_S $ within the region $ \lvert S_1\lvert  + \lvert S_2\lvert \geq L $, which is also the region where $p_1/u_1^c+p_2/u_2^c \geq L$, by Equation \eqref{uic}. Since the slope of  the tangent line to the curve $p_1/u_1^c+p_2/u_2^c  = L$ at the point  $ \mathcal{M}_S $  is $ -\frac{D_1^ 2 p_1}{D_2^2 p_2} $, we choose two arbitrary constants, $\epsilon$ and $\delta$, such that $D_1^ 2 p_1 \epsilon+D_2^2 p_2\delta\leq 0$. Using Equation \eqref{uic}, the function $\mathcal{E}({u}_{1}^c,{u}_{2}^c) $ in Equation \eqref{eq:mathcalE} becomes
	\begin{linenomath*}\begin{align} \nonumber
			\mathcal{E}(u_1^c,u_2^c) & =\sum_{i=1}^{2} p_iD_i \text{ln}(u_i^c)  +  \gamma_{12}  u_1^c u_2^c (\lvert S_1\lvert  + \lvert S_2\lvert  -L),
			\\ 
			&=	\sum_{i=1}^{2} p_iD_i \text{ln}(u_i^c)+ \gamma_{12}  u_1^c u_2^c \left(\frac{p_1}{u_1^c} + \frac{p_2}{u_2^c} -L\right).
			\label{eq:mathcalEs0}
	\end{align}\end{linenomath*} 
Then the Taylor expansion of $\mathcal{E}(u_1^c,u_2^c)$ in a neighbourhood of $ \mathcal{M}_S $ within the region $p_1/u_1^c+p_2/u_2^c \geq L$ is
\begin{linenomath*}\begin{align}\nonumber
	\mathcal{E}(u_{1S}^c+\epsilon,u_{2S}^c+\delta)& \approx    \mathcal{E}(u_{1S}^c,u_{2S}^c)+ \partial_{u_1^c} \mathcal{E}(u_{1S}^c,u_{2S}^c) \epsilon + \partial_{u_2^c} \mathcal{E}(u_{1S}^c,u_{2S}^c)\delta\\ \nonumber
	&= \mathcal{E}(u_{1S}^c,u_{2S}^c)\\ \nonumber
			& \qquad + \frac{p_1 D_1}{D_2}\frac{D_1 D_2  L - \gamma_{12}(p_1 D_1 +p_2 D_2)}{p_1 D_1 +p_2 D_2}
			\epsilon\\ \nonumber
			& \qquad + \frac{p_2 D_2}{D_1}\frac{D_1 D_2  L - \gamma_{12}(p_1 D_1 +p_2 D_2)}{p_1 D_1 +p_2 D_2}
			\delta\\ \nonumber
			&= \mathcal{E}(u_{1S}^c,u_{2S}^c)\\ \nonumber
			& \qquad + \frac{p_1 D_1^2}{D_1 D_2}\frac{D_1 D_2  L - \gamma_{12}(p_1 D_1 +p_2 D_2)}{p_1 D_1 +p_2 D_2}
			\epsilon\\ \nonumber
			& \qquad + \frac{p_2 D_2^2}{D_1 D_2}\frac{D_1 D_2  L - \gamma_{12}(p_1 D_1 +p_2 D_2)}{p_1 D_1 +p_2 D_2}
			\delta\\ \nonumber
			&=  \mathcal{E}(u_{1S}^c,u_{2S}^c) \\ \nonumber
			& \qquad+ \frac{D_1 D_2  L - \gamma_{12}(p_1 D_1 +p_2 D_2)}{(D_1 D_2)(p_1 D_1 +p_2 D_2)} (D_1^ 2 p_1 \epsilon+D_2^2 p_2\delta) \\
			& \geq \mathcal{E}(u_{1S}^c,u_{2S}^c),
\end{align}\end{linenomath*}
if $\gamma_{12} > \frac{D_1 D_2  L}{p_1 D_1 +p_2 D_2}$, where the inequality uses $D_1^ 2 p_1 \epsilon+D_2^2 p_2\delta\leq 0$.
	
We now examine whether there are any other minima of $ \mathcal{E}(u_1^c,u_2^c)$ in the region where $ \lvert S_1\lvert  + \lvert S_2\rvert  > L $. By Equation \eqref{ssssic}, the condition $ \lvert S_1\rvert  + \lvert S_2\rvert  > L $ is equivalent to $ {p_1}/{u_1^c}+{p_2}/{u_2^c}> L $.  Therefore we have the following constraints
	\begin{linenomath*}\begin{align}
			\frac{p_1}{u_1^c}+\frac{p_2}{u_2^c}&> L, \nonumber \\
			u_i^c&\geq \frac{p_i}{\lvert L\lvert }, \text{ for } i=1,2. 
			\label{eq:constr2}
	\end{align}\end{linenomath*}
%	 Also, by using the parameter values the constraints from Equation \eqref{eq:constr2} are
	%	\begin{linenomath*}\begin{align}
	%			\frac{1}{u_1^c}+\frac{1}{u_2^c} &> 1, \nonumber \\
	%			u_i^c &\geq 1, i=1,2. 
	%			\label{region_ui}
	%	\end{align}\end{linenomath*}
	A direct calculation using partial derivatives shows that there are no local minima of $ \mathcal{E}(u_1^c,u_2^c) $ (Equation \eqref{eq:mathcalEs0}) in the interior of the region of the plane $ (u_1^c, u_2^c) $ defined by Equation \eqref{eq:constr2}. Therefore any local minimum must occur on the boundary.  On the part of the boundary given by $ u_i^c =p_i/L $, for $ i=1,2 $, there is a unique minimum at \begin{linenomath*}\begin{equation}\label{eq:minimum2}
			\mathcal{M}_H=(u_{1H}^c, u_{2H}^c):=\left(\frac{p_1}{L},\frac{p_2}{L}\right).
	\end{equation}\end{linenomath*}
	This is also a local minimum of the region defined by Equation \eqref{eq:constr2}.  This can be shown by performing a Taylor expansion of $ \mathcal{E}(u_1^c,u_2^c) $ about the point $\mathcal{M}_H$, to give
	\begin{linenomath*}\begin{align}\nonumber
			\mathcal{E}(u_{1H}^c+\epsilon,u_{2H}^c+\delta)& \approx    \mathcal{E}(u_{1H}^c, u_{2H}^c)+ \partial_{u_1^c} \mathcal{E}(u_{1H}^c, u_{2H}^c) \epsilon + \partial_{u_2^c} \mathcal{E}(u_{1H}^c, u_{2H}^c)\delta\\\nonumber
			&= \mathcal{E}(u_{1H}^c, u_{2H}^c)+ L D_1 \epsilon + L D_2 \delta \\ \nonumber
			& \geq \mathcal{E}(u_{1H}^c, u_{2H}^c),
	\end{align}\end{linenomath*}
	where the inequality uses $\epsilon\geq0$, $\delta\geq0$, so that we remain in the $ u_i \geq p_i/L$ region in Figure \ref{fig:Diagram}.

In summary, if  $ 0<\gamma_{12}<\frac{D_1 D_2  L}{p_1 D_1 +p_2 D_2} $ then $ \mathcal{E}(u_1^c,u_2^c) $ (Equation \eqref{eq:mathcalE}) has a unique minimum, given by $\mathcal{M}_H$.  However, if $ \gamma_{1 2}> \frac{D_1 D_2  L}{p_1 D_1 +p_2 D_2} $ then $ \mathcal{E}(u_1^c,u_2^c) $ has two local minima, given by $\mathcal{M}_H$ and $\mathcal{M}_S$ (see Figure \eqref{fig:Diagram}).
	
Now, we recover the local minimizer $u_i(x)$ (Equation \eqref{eq:ui}) of the energy (Equation \eqref{eq:energy-local}). To give a concrete example, we use the parameter values $p_1=p_2=D_1=D_2=L=1$. If $(u_1^c,u_2^c)=\mathcal{M}_H$ then $u_1 (x)=u_2 (x)=1$, the homogeneous steady state, which we denote by $\mathcal{S}_H$.  If $(u_1^c,u_2^c)=\mathcal{M}_S$ then 
\begin{linenomath*}\begin{align}\label{eq:St}
			u_i (x)=\begin{cases}
				2, &\mbox{for $x \in S_i$} \\
				0, & \mbox{for $x \in [0,1]$\textbackslash$S_i$},
		\end{cases}
\end{align}\end{linenomath*}
with $ \lvert S_i\lvert  =1/2 $, for $ i=1,2 $, and $ \lvert S_1  \cap S_2\lvert =0 $.  This is a class of solutions we denote by $\mathcal{S}_S^{2,2}$, where the subscript $S$ stands for segregation and the superscript $2,2$ denotes the finite positive value that functions $u_1(x)$ and $u_2(x)$ take, respectively. To avoid any confusion, we want to stress that the points $\mathcal{M}_H$ (Equation \eqref{eq:minimum2}) and $\mathcal{M}_S$ (Equation \eqref{eq:minimum}) are local minima of $\mathcal{E}(u_1^c,u_2^c)$ (Equation \eqref{eq:mathcalE}), while the functions $\mathcal{S}_H$ and $\mathcal{S}_S^{2,2}$ are minimizers of the energy $E[u_1,u_2]$ (Equation \eqref{eq:energyloc0}).
    
In our example, if  $ 0<\gamma_{12}<1/2 $, $E(u_1^c,u_2^c)$ (Equation \eqref{eq:energy-local}) has a unique minimum, given by $\mathcal{S}_H$.  If $ \gamma_{1 2}>1/2 $ the energy has two local minima, given by $\mathcal{S}_H$ and $\mathcal{S}_S^{2,2}$. However, recall that $\mathcal{S}_H$ and $\mathcal{S}_S^{2,2}$ are derived by minimizing the energy (Equation \eqref{eq:energy-local}) in a particular  class of piece-wise constant functions given by Equation \eqref{eq:ui}. Therefore, the steady states $\mathcal{S}_H$ and $\mathcal{S}_S^{2,2}$ may not be minima of the full function space where solutions might live. However, the linear stability analysis performed in Section \ref{sec:linear_analysis}, and particularly Equation \eqref{eq:turingcondition}, suggests that in the limit as $\alpha$ tends to zero, $\mathcal{S}_H$ is stable if $ \gamma_{12} < 1 $. This gives rise to the diagram of analytically-predicted steady states given by the red and black lines in Figure \ref{fig:BifurcationDiagramHysteresis2}.
	
	%Recall, however, that this are only local minima of a constrained function space \textcolor{hillencolor}{TH: clarify. This is only .... implies, however it is not .... I think you should say what important property is missing here. Maybe it is not necessarily a global minimum?}    For $\mathcal{S}_H$, we should also use linear stability analysis  to determine stability concretely. Indeed, Equation \eqref{eq:turingcondition} shows that $\mathcal{S}_H$ is stable if and only if $ \gamma_{12} < 1 $.  This gives rise to the diagram of predicted steady states given by the red and black lines in Figure \ref{fig:BifurcationDiagramHysteresis2}.
	
	%	\color{hillencolor}
	%	TH: Here is an idea to generalize the choice of $p_1=p_2=D_1=D_2=L=1$ as follows. 
	%	We could define 
	%	\[ w_1 = p_2 u_1^c, \qquad w_2 = p_1 u_2^c \qquad \ell =\frac{L}{p_1 p_2} \]
	%	Then the conditions read 
	%	\[ \frac{1}{w_1} + \frac{1}{w_2} > \ell, \qquad w_i \geq \frac{1}{\ell}. \]
	%	and we would minimize 
	%	\begin{linenomath*}\begin{align}
	%			\mathcal{E}(w_1,w_2)=\ln(w_1) +\ln (w_2) +\left(\frac{1}{w_1} + \frac{1}{w_2} -\ell\right) \gamma_{12} w_1 w_2,
	%	\end{align}\end{linenomath*}
	%	Maybe that can be done with very little modifications. Then we have a more general result. 
	
	\color{black}
	\begin{figure}[H]
		\centering	
		\includegraphics[width=.7\textwidth]{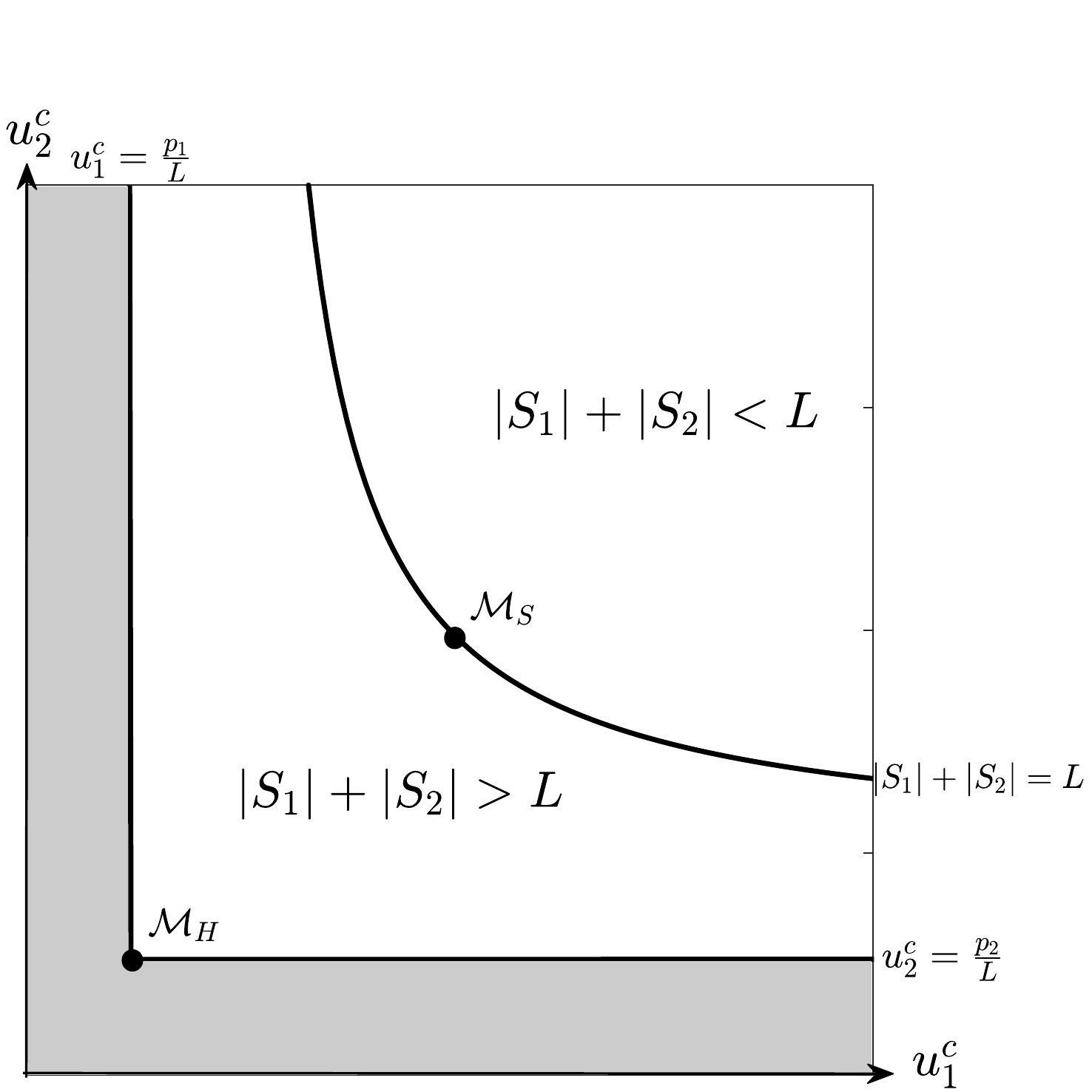}
		\caption{The white region represents the admissible domain, in which we look for the local minima of the function $ \mathcal{E}(u_1^c,u_2^c) $ (Equation \eqref{eq:mathcalE}). The point $ \mathcal{M}_H $, corresponding to the homogeneous steady state, is always a local minimum. Whether the point $ \mathcal{M}_S $ is a local minimum depends upon the value of $ \gamma_{1 2} $}
		\label{fig:Diagram}
	\end{figure}

	\subsubsection{Numerical verification}
	\label{sec:numerics1}
	
	{\noindent}The analysis of Section \ref{sec:analytic1} suggests that for $p_1=p_2=D_1=D_2=L=1$, when $ 1/2 < \gamma_{12} < 1 $ and the averaging kernel $K$ is arbitrarity small, Equation \eqref{eq:system} 
	%with $ K=K_{\alpha} $, JRP: the precise functional form of K is irrelevant here
	should exhibit bistability between the homogeneous solution, $\mathcal{S}_H$, and an inhomogeneous solution arbitrarily close to $\mathcal{S}_S^{2,2}$. Here, we verify this numerically.  
	
	Figure \ref{fig:BifurcationDiagramHysteresis2} summarises our results.  To produce this figure, we start with $ K=K_\alpha $ and $\gamma_{12}=1.2$, so that the homogeneous steady state is unstable. The initial condition is a small perturbation of the solution given in Equation \eqref{eq:St} which we run to numerical steady state.  We then reduce the magnitude of $\gamma_{12}$ by $\Delta \gamma_{12}=0.05$ and solve the system again using a small random perturbation of the previous simulation as initial condition. We then repeat this process of reducing $\gamma_{12}$ and re-running to steady state until the system returns to the homogeneous steady state. This process of slowly changing one parameter and re-running to steady state is a type of numerical bifurcation analysis used in many previous studies, e.g. \cite{painter2011spatio}.  The numerical scheme we use for solving our particular system is detailed in \cite{HPLG21}.
	
	We examine three different values of $ \alpha $ in Figure \ref{fig:BifurcationDiagramHysteresis2}.  For each of these, we observe that the inhomogeneous solution persists below $\gamma_{12}=1$ and above $\gamma_{12}=1/2$, as predicted by our the calculations of Section \ref{sec:analytic1}.  Furthermore, as $ \alpha $ decreases (towards the local limit), the numerical branches appear to tend towards the branch calculated in Section \ref{sec:analytic1}.
	\begin{figure}[H]
		\centering	
		\includegraphics[width=0.5\textwidth]{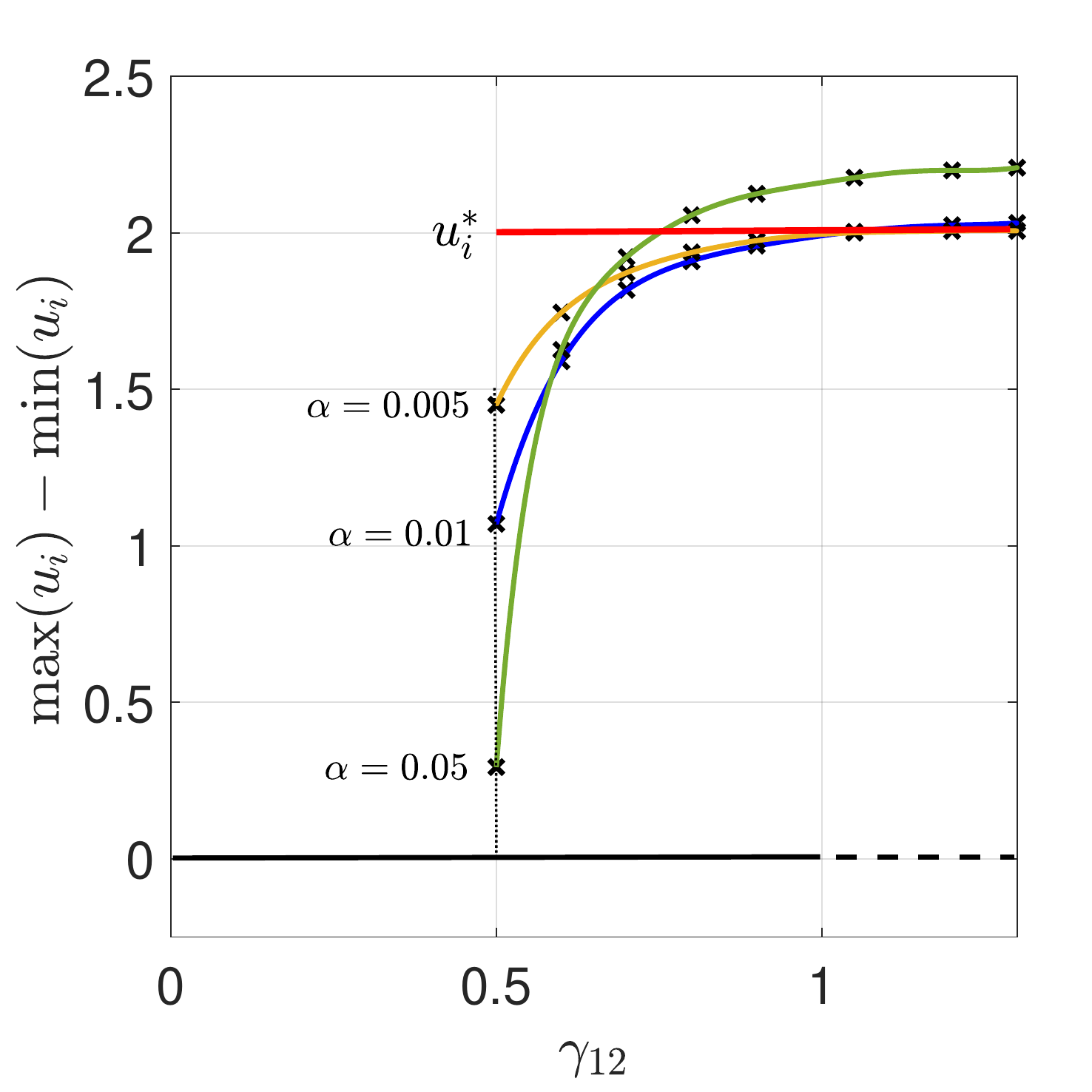}
		\caption{Numerically computed bifurcation diagram of Equation \eqref{eq:system}, with $ K=K_{\alpha}$ (Equation \eqref{eq:top-hat}), for different values of $ \alpha $. The other parameter values are $p_1=p_2=D_1=D_2= L=1 $. The red solid line shows the minimum energy branch computed analytically using the techniques in Section \ref{sec:analytic1}, pertaining to the limit $\alpha \rightarrow 0$.  The numerical simulations show that the system admits bistability for $ 0.5 < \gamma_{12} < 1 $, in agreement with our analytic predictions}
		\label{fig:BifurcationDiagramHysteresis2}
	\end{figure}
	Finally, in Figure \eqref{fig:Hysteresis_complete}, we show some numerical stationary solutions for different values of $ \alpha $, as $ \gamma_{12} $ varies in the range $ [0.55,1.05] $. We observe that, as $ \alpha $ decreases, the numerical solution appears to tend to a piece-wise constant function of the class given in Equation \eqref{eq:St} and predicted by the analysis of Section \ref{sec:analytic1}.
	\begin{figure}[H]
		\centering	
		\includegraphics[width=1\textwidth]{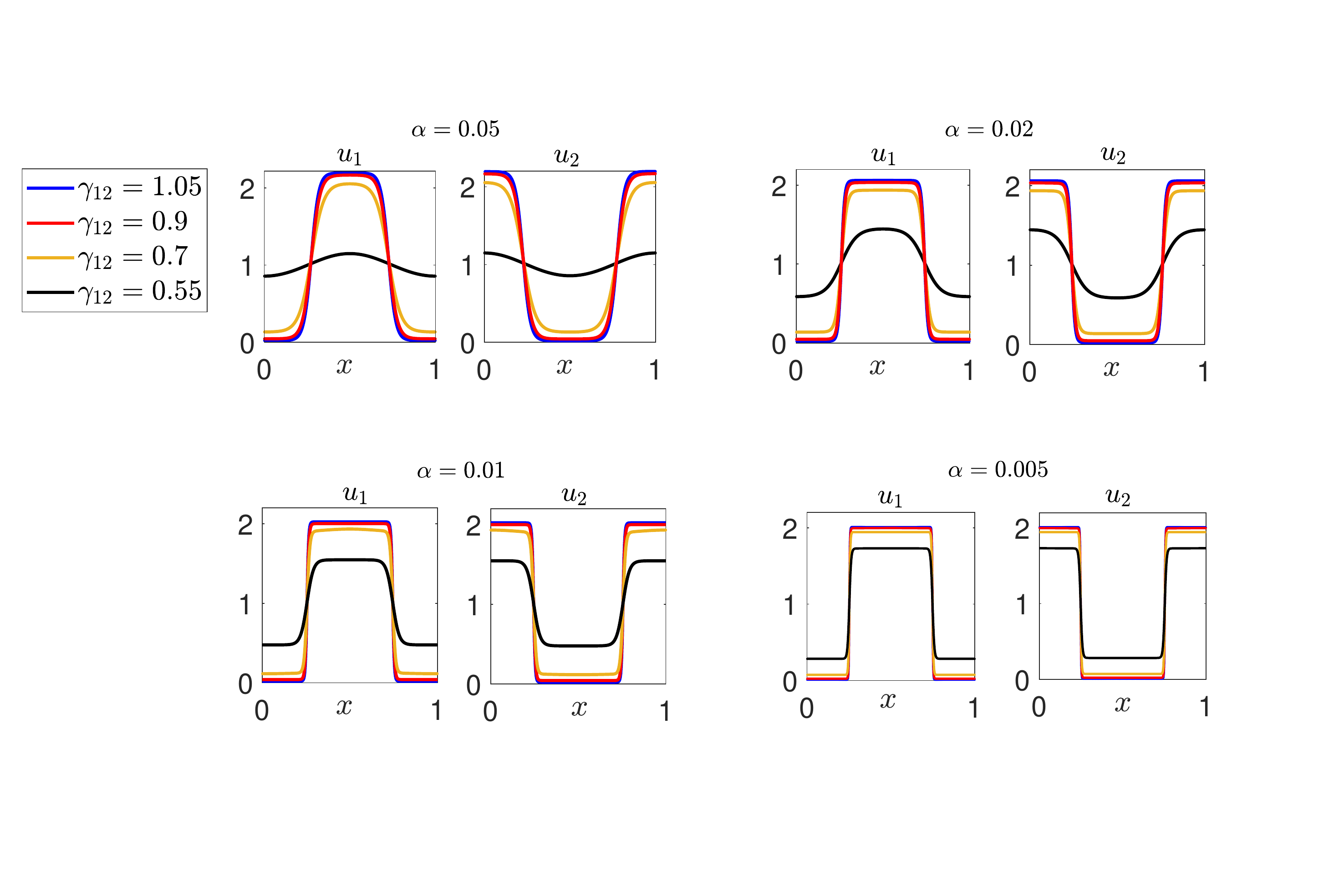}
		\caption{Comparison between numerically computed stationary  $ \mathcal{S}_S^{2,2} $ solutions to Equation \eqref{eq:system}, with $ K=K_{\alpha} $ (Equation \eqref{eq:top-hat}), for different values of $ \gamma_{12}>0 $ and $ \alpha $. The parameter values used in the simulations are $ D_1=D_2=1 $, $ p_1=p_2=1 $ }
		\label{fig:Hysteresis_complete}
	\end{figure}
	
	\subsection{The case $\mathbf{\gamma_{1 1} =\gamma_{22}=0 }$ with mutual attraction, $\mathbf{\gamma_{12}=\gamma_{21}<0}$ }\label{sub:2}
	
	\subsubsection{Analytic results in the local limit}
	\label{sec:analytic2}	
	
	{\noindent}As in Section \ref{sec:analytic1}, here we will look for the minimizers of the local version of the energy (Equation \eqref{eq:energy-local}) in the class of piece-wise constant functions defined as 
	\begin{linenomath*}\begin{align}
			\label{eq:ui1}
			u_i (x)=\begin{cases}
				u_{i}^c, &\mbox{for $x \in S_{i}$}, \\
				0, & \mbox{for $x \in [0,L]$\textbackslash$S_{i}$},
			\end{cases}
	\end{align}\end{linenomath*}
	where $ u_{i}^c \in \mathbb{R}^{+} $ and $S_{i}$ are subsets of $[0,L]$, for $i\in\{1,2\}$. 
	
	Placing Equation \eqref{eq:ui1} into Equation \eqref{eq:energy-local}, and repeating the same argument of Section \ref{sec:analytic1}, we obtain
	\begin{linenomath*}\begin{align}\label{eq:energyloc1}
			E[u_1 ,u_2 ]=&	\sum_{i=1}^{2}  p_{i} D_i \text{ln}(u_{i}^c)  + \gamma_{12} u_{1}^c u_{2}^c \lvert S_{1} \cap S_{2}\lvert.
	\end{align}\end{linenomath*}
	In this case, to minimize Equation \eqref{eq:energyloc1} we note that, since $ \gamma_{1 2}<0 $, $E[u_1 ,u_2 ] $ can be lowered by increasing $\lvert S_1\cap S_2\lvert$, whilst keeping everything else the same. Therefore if we keep $\lvert S_1\lvert$ and $\lvert S_2 \lvert $ unchanged, then $\lvert S_1\cap S_2\lvert $ is maximised when either $S_1 \subseteq S_2$ or $S_2 \subseteq S_1$, so that $\lvert S_1\cap S_2 \lvert =\min_i\lvert S_i \lvert$. Thus 
	\begin{linenomath*}\begin{align}\label{argmin_neg_gamma}
			\mbox{argmin}_{u_1,u_2}E[u_1 ,u_2 ]=\mbox{argmin}_{u_1,u_2}\left[\sum_{i=1}^{2} p_iD_i \text{ln}[u_i^c]+ \gamma_{12}\min\{p_1 u_2^c,p_2u_1^c\}\right],
	\end{align}\end{linenomath*}
	and therefore we have that $E[u_1 ,u_2 ] \rightarrow -\infty$ as $\min\{p_1 u_2^c,p_2u_1^c\} \rightarrow \infty$. As we approach this limit, $u_1^c, u_2^c$ become arbitrarily large, so $u_1 $ and $ u_2 $ (Equation \eqref{eq:ui1}) become arbitrarily high, arbitrarily narrow functions with overlapping support. We will denote the limit of this solution by $ \mathcal{S}_A^{\infty} $, in which the subscript $ A $ stands for aggregation and the $\infty$ superscript denotes that the solution becomes unbounded in the local limit.  Thus $E[u_1 ,u_2 ]  $ is minimized by $ \mathcal{S}_A^{\infty} $ whenever $\gamma_{12}$ is negative, regardless of its magnitude.

One can also show, using a very similar argument to Section \ref{sec:analytic1} (details omitted), that the homogeneous steady state, $\mathcal{S}_H$, is the only other possible local minimiser of the energy that satisfies Equation (\ref{ssssic}), and this is only a local minimum when $\gamma_{12}>- L(p_1 D_1 + p_2 D_2)/(p_1 p_2)$. However, linear stability analysis (Equation \eqref{eq:eigenvalues}) suggests that, in the limit as $\alpha$ tends to zero, the homogeneous steady state is linearly stable only if %$\gamma_{12}^2 \bar{u}_1\bar{u}_2 <D_1 D_2$. 
$\gamma_{12}  > -L \sqrt{D_1 D_2/(p_1p_2)}$. Since Young's inequality for products implies that $L \sqrt{D_1 D_2/(p_1p_2)}< L(p_1 D_1 + p_2 D_2)/(p_1 p_2)$, any time  $\mathcal{S}_H$ is linearly stable it is also a local energy minimiser within the set of functions given by Equation (\ref{eq:ui1}). %any time $\mathcal{S}_H$ is a local energy minimiser within the set of functions given by Equation (\ref{eq:ui1}), it is also linearly stable.  
The red and black lines in Figure \ref{fig:BifurcationDiagramHysteresis} are the conclusion from combining all the results from Section \ref{sec:analytic2}, both energy functional and linear stability analysis, in the case where $p_1=p_2=D_1=D_2=L=1$.
	
%	\textcolor{hillencolor}{(TH1:) Here we refer to the linear analysis which was done for $\alpha>0$, and we use it to argue for the case $\alpha=0$. That's not possible. We need to clearly say that the linear analysis for $\alpha>0$ gives an indication of the stability of local limit. Or vice versa, that the local limit solutions are close to stable steady states for $\alpha>0$. This issue shows up five-six more times. I indicate it with (TH1)}. \textcolor{red}{JRP: I am pretty sure this is not true.  The linear stability analysis results work for any kernel.  In fact, it is a simpler calculation when $K$ is the Dirac delta.  What we originally had is correct.}

	\subsubsection{Numerical verification}
	\label{sec:numerics2}
	
	The analysis of Section \ref{sec:analytic2} suggests that when $\gamma_{12}  > -L \sqrt{D_1 D_2/(p_1p_2)}$, $\gamma_{12}<0$, and $ \alpha $ is arbitrarily small, Equation \eqref{eq:system} should display bistability between the homogeneous solution and an inhomogeneous solution, whose structure tends towards $ \mathcal{S}_A^{\infty} $ as $ \alpha \rightarrow 0 $.  Here we verify this conjecture numerically, with results shown in Figures \ref{fig:BifurcationDiagramHysteresis} and \ref{fig:BlowUp}.
	
	To construct these figures, we perform a similar analysis to Section \ref{sec:numerics1}.  We simulate Equation \eqref{eq:system} with $ K=K_{\alpha}$ (Equation  \eqref{eq:top-hat}) for small values of $\alpha$. We use the parameter values $p_1=p_2=D_1=D_2=L=1$, as in Section \ref{sec:analytic1}.  For these values, the constant steady-state is stable to perturbations at all wavenumbers for $-1<\gamma_{12}<0$.  Therefore, we begin our analysis by setting $\gamma_{12}=-1.2$, reducing the magnitude of $\gamma_{12}$ by a small amount ($\Delta \gamma_{12}=0.05$) at each iteration of the analysis, as in Section \ref{sec:numerics1}.  
	
	Our results show that patterns persist beyond $\gamma_{12}=-1$, and the extent of this persistence depends on $\alpha$ (Figure \ref{fig:BifurcationDiagramHysteresis}). As $ \alpha $ is decreased, the numerical stationary states become higher, narrower functions with qualitatively similar shapes, as predicted by the previous analysis (Figure \ref{fig:BlowUp}).
	\begin{figure}[H]
		\centering
		\subfloat[]
		{\includegraphics[width=.35\textwidth]{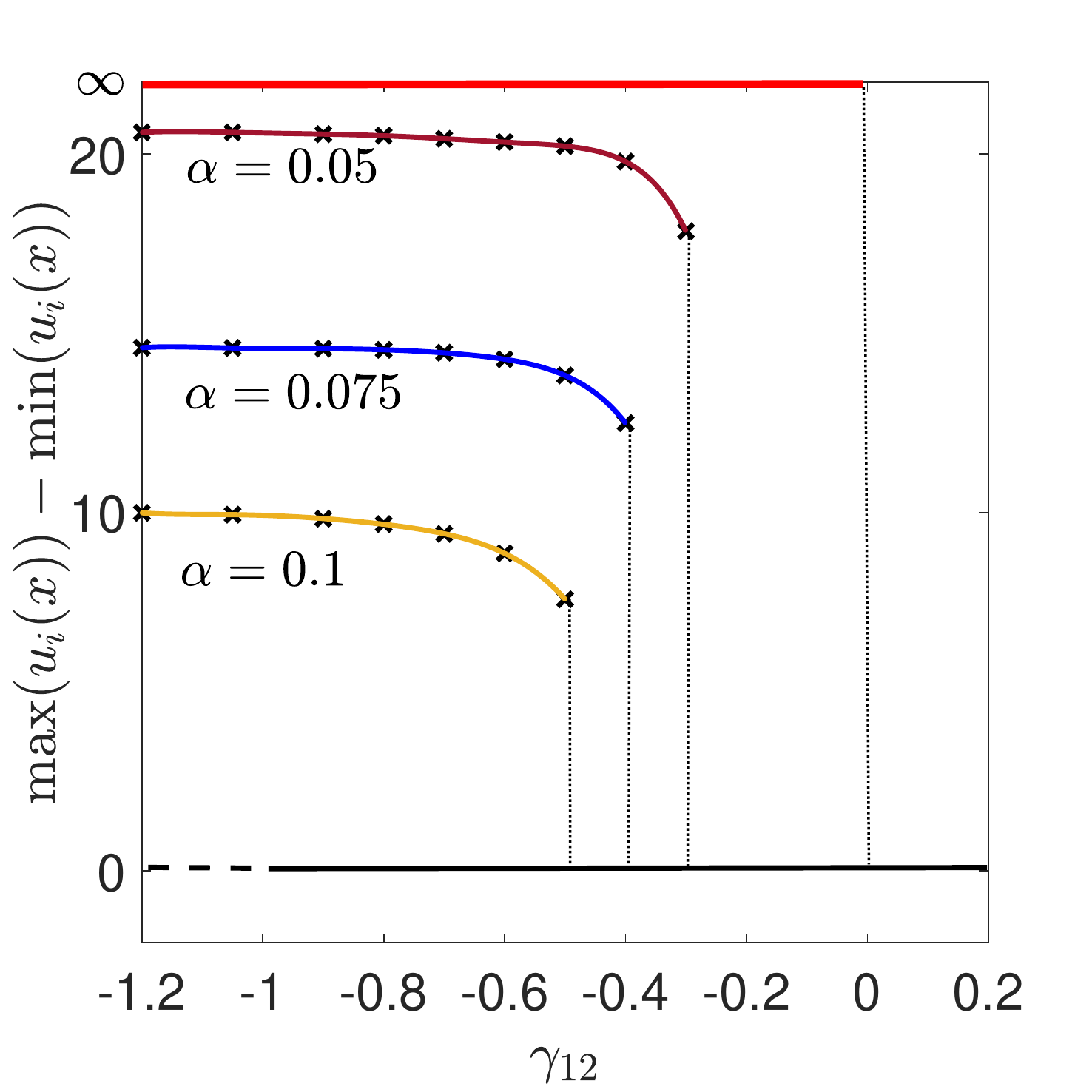}\label{fig:BifurcationDiagramHysteresis}}
		\hspace{0.0cm}
		\subfloat[]
		{\includegraphics[width=.425\textwidth]{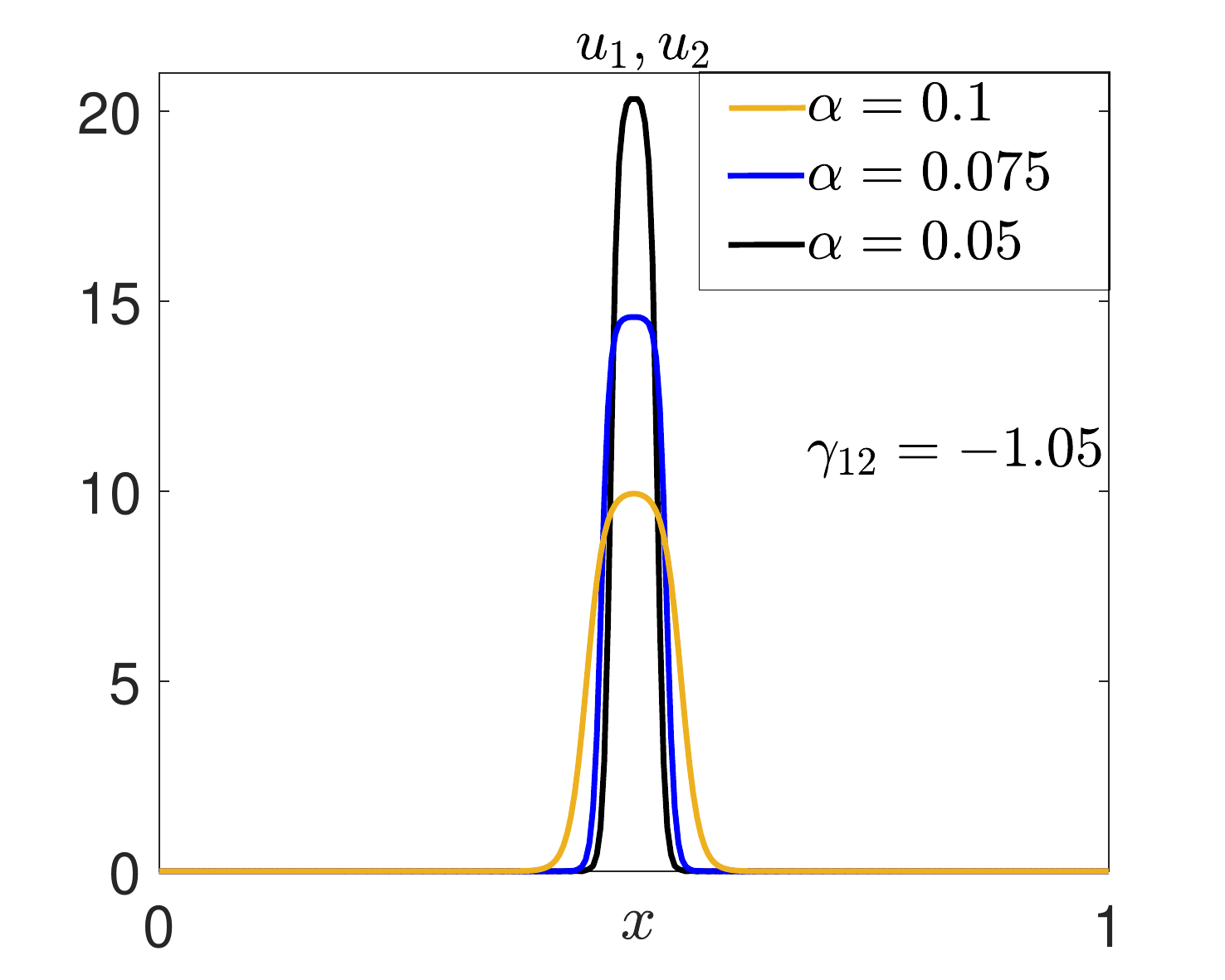}\label{fig:BlowUp}}
		\caption{Numerical investigation of Equation \eqref{eq:system}, with $ K=K_{\alpha} $ (Equation \eqref{eq:top-hat}) for $ \gamma_{12}<0 $. The parameter values are $p_1=p_2=D_1=D_2=1$, $ L=1 $. Panel (a) gives a numerical bifurcation diagram showing the bistability between the homogeneous steady state (in black) and the inhomogeneous steady states $\mathcal{S}_A^{\infty}  $, for different values of $ \alpha $. Panel (b) shows the corresponding numerical stationary solutions whan $ \gamma_{1 2}=-1.05 $, for different values of $ \alpha $.  As $ \alpha $ decreases, the solutions appear to tend towards the $\mathcal{S}_A^{\infty}  $ solution}
		\label{fig:MutualAttraction} 
	\end{figure}
	%		\begin{figure}[H]
	%		\centering	
	%		\includegraphics[width=0.4\textwidth]{BifurcationDiagram}
	%		\caption{Numerically computed bifurcation diagram of Equation \eqref{eq:system}, with $ K=K_{\alpha} $ \eqref{eq:top-hat}, for different values of $ \alpha $. The other parameter values are $p_1=p_2=D_1=D_2=1$, $ L=1 $. The non-homogenous solution persists beyond $\gamma_{12}=-1$ and the extend of each branch depends on $ \alpha $.}
	%		\label{fig:BifurcationDiagramHysteresis}
	%	\end{figure}
	%	
	%	\begin{figure}[H]
	%		\centering
	%		\includegraphics[width=0.4\textwidth]{BlowUp.pdf}	
	%		\caption{Comparison between numerically computed stationary solutions to Equation \eqref{eq:system}, with $ K=K_{\alpha} $ \eqref{eq:top-hat}, for different values of $ \alpha $. The other parameter values are $p_1=p_2=D_1=D_2=1$, $ L=1 $. As $ \alpha $ decreases, the solution tends towards $\mathcal{M}_{\infty}$ solution.}
	%		\label{fig:BlowUp}
	%	\end{figure}

\subsection{The case $\mathbf{\gamma_{1 1}, \gamma_{22} \neq 0 }$}
\label{sec:gammaneq0}

The case $ \gamma_{1 1}, \gamma_{22} \neq 0  $ uses similar arguments to those in Section \ref{sub:1}.  We therefore just summarise the results here, leaving details of the calculations for Appendix \ref{appendix:A}.  
%Through the study of the minima of the energy, in the case in where $ K $ is the Dirac $ \delta $ function, we have identified the structure of the stationary states admitted by Equation \eqref{eq:system} when $ K=K_{\alpha} $. 

In our computations, we consider the case $\gamma_{22}=\gamma_{11}$ and fix the other parameter values as $p_1=p_2=D_1=D_2=L=1$. The analysis of this case reveals five distinct classes of qualitatively-different stable solutions (Figure  \ref{fig:Msolutions}), each of which we have verified through numerical analysis (where throughout this section we use `stable' to mean `Lyapunov stable').  These are (i) territory-like segregation patterns, $ \mathcal{S}_S^{2,2} $, the height of which remains finite as $K$ becomes arbitrarily narrow, (ii) segregation patterns where the height of both species becomes arbitrarily high as $K$ becomes arbitrarily narrow, denoted by $ \mathcal{S}^{\infty,\infty}_S $, (iii) segregation patterns where the height of just one species becomes arbitrarily high as $K$ becomes arbitrarily narrow but the other remains at finite height, denoted by $ \mathcal{S}_S^{1,\infty} $,  (iv) aggregation patterns, $ \mathcal{S}_A^{\infty} $, where the height of both species becomes arbitrarily high as $K$ becomes arbitrarily narrow,  and  (v) the spatially homogeneous solution $ \mathcal{S}_H $.
%the stationary solutions $ \mathcal{S}_{H,\infty} $ and $ \mathcal{S}_{\infty, H} $ in which one population tends towards a homogeneous distribution in space and the other one is relegated in a narrow region of the domain (see Fig. \ref{fig:Mt_Infinity}). 

Figure \ref{fig:InstabilityRegions} shows the parameter regions in which the analysis from Appendix \ref{appendix:A} predicts we should see these various solutions. Notice that there are regions in which we have two-, three-, and even four-fold stability.  These calculations are verified numerically in Figures \ref{fig:BD} and \ref{fig:BD_mutual_avoidance_self_attraction}.  In particular, Figures \ref{fig:BD} and \ref{fig:BD_mutual_avoidance_self_attraction}  show that, as $\alpha$ becomes smaller, so the numerical results become closer to our analytic predictions. 

%	{\color{red}[JRP: here we should have a paragraph explaining a bit how the observations of Figure \ref{fig:InstabilityRegionsComplete} can be understood intuitively]}

As shown in Figure \ref{fig:InstabilityRegions}, when species exhibit mutual attraction ($\gamma_{12}<0$), our analysis predicts two stationary states: the homogeneous distribution $S_H$ and the aggregation pattern $S_A^{\infty}$. In particular, if $\gamma_{12}<0$ and species show mutual avoidance, i.e. $\gamma_{11}>0$, there always exists a region in the parameter space in which both stationary states, $S_H$ and $S_A^{\infty}$, are stable. However, if the magnitude of self-avoidance $\gamma_{11}$ is relatively weaker than the rate of mutual-attraction $\gamma_{12}$, aggregation is more favored than the homogeneous distribution. In this case, $S_A^{\infty}$ is the only stable steady state, while the $S_H$ solution is unstable. 

On the other hand, in the mutual- and self-attraction case ($\gamma_{12}<0$, $\gamma_{11}<0$), bistability between the homogeneous distribution $S_H$ and the aggregation pattern $S_A^{\infty}$ is observed as long as the magnitudes of $\gamma_{12}$ and $\gamma_{11}$ are sufficiently small. However, if the rates of mutual and self-attraction become stronger, aggregation is favoured over the homogeneous distribution. Consequently, as the magnitudes of $\gamma_{11}$ and $\gamma_{12}$ increase, the homogeneous solution $S_H$ loses stability.

The scenario becomes even richer when $\gamma_{12}>0$. In particular, if the species exhibit mutual avoidance ($\gamma_{12}>0$) and self-avoidance ($\gamma_{11}>0$), the stable steady states predicted by our analysis are the homogeneous solution $S_H$ and segregation pattern $S_S^{2,2}$. When the strength of self-repulsion ($\gamma_{11}$) is relatively stronger than the tendency to avoid individuals from the other species ($\gamma_{12}$), the homogeneous distribution is favoured over aggregation with conspecifics, so that $S_H$ is the only stable steady state. However, if the rate of mutual avoidance $\gamma_{12}$ increases, the tendency to avoid individuals from the foreign species promotes the formation of spatial distributions in which the two species are segregated into distinct sub-regions of space. Indeed, Figure \ref{fig:InstabilityRegions} shows that as $\gamma_{12}$ increases, the segregation pattern $S_S^{2,2}$ acquires stability. However, as long as the magnitude of self-avoidance is sufficiently strong, the homogeneous distribution remains stable. Indeed, we observe that there is a parameter region in which the system shows bistability between $S_H$ and $S_S^{2,2}$. Finally, if the strength of mutual avoidance $\gamma_{12}$ becomes sufficiently stronger than the propensity to avoid conspecifics, segregation becomes more favored over the homogeneous distribution. Indeed, as $\gamma_{12}$ increases, $S_H$ loses its stability.

In the mutual avoidance ($\gamma_{12}>0$) and self-attraction ($\gamma_{11}<0$) scenario, the stable states predicted by our analysis include $S_H$ (homogeneous) and $S_S^{2,2}$ (territory-like segregation) as before, but also $ \mathcal{S}_S^{\infty, \infty}$ (self-aggregated species that are segregated from one another) and $ \mathcal{S}_S^{1,\infty}$ (segregated species where only one population is self-aggregated). If the magnitudes of self-attraction $\gamma_{11}$ and mutual avoidance $\gamma_{12}$ are sufficiently small, the homogeneous distribution, $S_H$, is also stable. However, for small values of $\gamma_{11}$, as the rate of mutual avoidance $\gamma_{12}$ increases, we observe the same scenario discussed above: $S_S^{2,2}$ gains stability and there exists a region in the parameter space in which both $S_S^{2,2}$ and $S_H$ are stable. Finally $S_H$ loses stability as $\gamma_{12}$ increases further. We also observe that high rates of self-attraction $\gamma_{11}$ favour the formation of sub-regions with high densities of individuals. Therefore, when the magnitude of $\gamma_{11}$ is strong, $ \mathcal{S}^{\infty}_S $ and $ \mathcal{S}^{\infty}_H $ solutions are favored over the homogeneous distribution $S_H$ and the inhomogeneous distribution $S_S^{2,2}$, which become unstable. 

Finally, we verify this multi-stability numerically for small $\alpha$, with results shown in Figures \ref{fig:BD} and \ref{fig:BD_mutual_avoidance_self_attraction}.  As in the $\gamma_{11}=\gamma_{22}=0$ cases, the numerics follow our analytic predictions well, giving better approximations for smaller $\alpha$.
 
%In particular, we have numerically computed the bifurcation diagrams shown in Figs. , which confirm that the system admits the bistability between the aggregation pattern $ \mathcal{S}_T $ and the homogeneous solution, when $ \gamma_{11} $ and $ \gamma_{12} $ are both positive (see Fig. \ref{fig:BD_mutualself_avoidance}), the bistability between $ \mathcal{S}_{\infty}^a $ and homogeneous state, when $ \gamma_{12}<0 $ (see Figs. \ref{fig:BD_mutual_attraction_self_avoidance}-\ref{fig:BD_mutualself_attraction}), and the coexistence of the stable steady-state $ \mathcal{S}_H$ $, \mathcal{S}_T$, $ \mathcal{S}_{\infty} $ and  $ \mathcal{S}_{H,\infty} $, when $ \gamma_{12}>0 $ and $ \gamma_{1 1}<0 $ (see Fig. \ref{fig:BD_mutual_avoidance_self_attraction}).

\begin{figure}[H]
	\centering
	\subfloat[]
	{\includegraphics[width=1\textwidth]{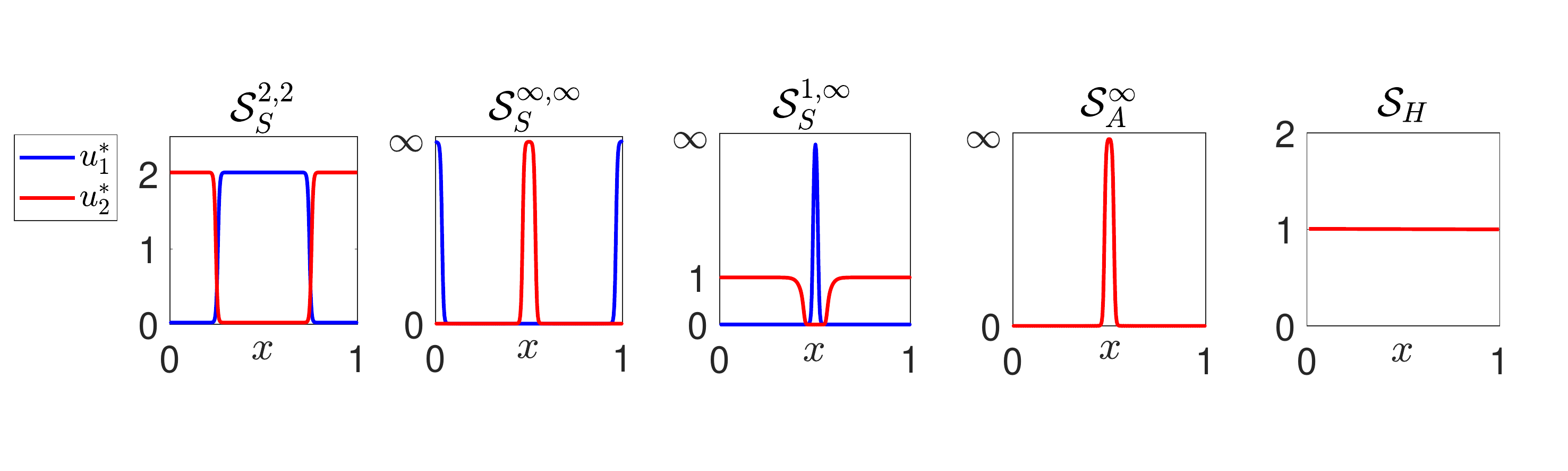}\label{fig:Msolutions}}
	\vspace{0.0cm}
	\subfloat[]
	{\includegraphics[width=0.8\textwidth]{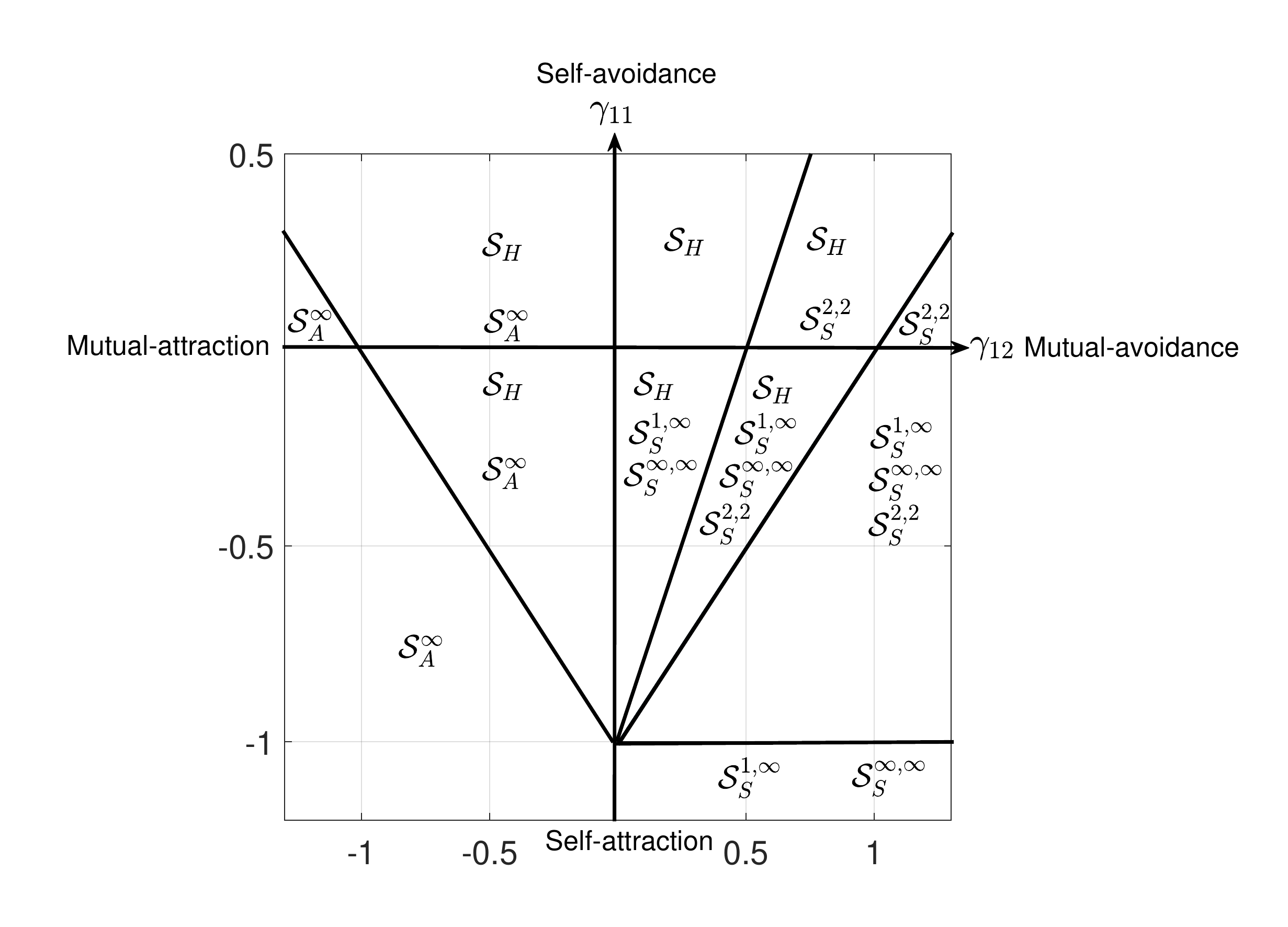}\label{fig:InstabilityRegions}}
	\caption{Panel (a) shows the five qualitatively-different local minimum energy states revealed by the analysis in Appendix \ref{appendix:A}.  Note that the $ \mathcal{S}_{S}^{1,\infty} $ solution also allows for $u_1 $ and $u_2 $ to be swapped.  These plots were produced by setting $ K=K_{\alpha} $, $\alpha=0.025$ and by fixing the following parameter values: $p_1=p_2=D_1=D_2=L=1$. For each graph of Panel (a), we fixed different values of the parameters $\gamma_{11}$ and $\gamma_{12}$, in particular we used: $\gamma_{11}=0.2$ and $\gamma_{12}=1.05$, for $S_S^{2,2}$; $\gamma_{11}=-0.15$ and $\gamma_{12}=0.4$, for $ \mathcal{S}_S^{\infty, \infty} $ and $ \mathcal{S}_S^{1,\infty}$; $\gamma_{11}=0.2$ and $\gamma_{12}=-1.05$, for $ \mathcal{S}_A^{\infty}$; $\gamma_{11}=0.2$ and $\gamma_{12}=0.2$, for $ \mathcal{S}_H$. Panel (b) shows the minimum energy solutions to Equation \eqref{eq:system} in different subregions of the plane $(\gamma_{12}, \gamma_{11}) $, for $ N=2 $, $ \gamma_{2 2}=\gamma_{1 1} $ and $\gamma_{12} = \gamma_{21}$, predicted by the analysis in Appendix \ref{appendix:A}. This graph is obtained by fixing the following parameter values: $p_1=p_2=D_1=D_2=L=1$ %For simplicity, in the fourth quadrant we use $ \mathcal{S}_{H,\infty} $ to indicate both $ \mathcal{S}_{H,\infty} $ and $ \mathcal{S}_{\infty,H} $ solutions. 
		%	{\color{red}Thomas' suggestion for notation: $\mathcal{S}_T,\mathcal{S}_S^\infty, \mathcal{S}_H^\infty, \mathcal{S}_A^\infty, \mathcal{S}_H$.  JRP suggestion: $\mathcal{S}_S^{1,1},\mathcal{S}_S^{\infty,\infty}, \mathcal{S}_S^{1,\infty}, S_A, S_H$. Valeria and Jonathan's suggestion: $\mathcal{S}_S^{2,2},\mathcal{S}_S^{\infty,\infty}, \mathcal{S}_S^{1,\infty}, S_A, S_H$ }
	}
	\label{fig:InstabilityRegionsComplete}
\end{figure}

In the following Lemma, we summarize the results shown in Figure \ref{fig:InstabilityRegionsComplete}, which are derived in Appendix \ref{appendix:A}.
\begin{lemma}\label{l:2}
Let $ \gamma_{2 2}=\gamma_{1 1} $, $\gamma_{21}=\gamma_{12}$ and $p_1=p_2=D_1=D_2=L=1$, and use `minimum energy' to mean `local minimum energy'.\\
\underline{Case A}: Self avoidance (${\gamma_{11}>0}$) and mutual avoidance (${\gamma_{12}>0})$.
\begin{enumerate}
    \item If  $ \gamma_{11} > 2\gamma_{12}-1 $ then the minimum energy state is $\mathcal{S}_H$.  
    \item If $ 0<\gamma_{11} < 2\gamma_{12}-1 $ then $\mathcal{S}_H$ and $\mathcal{S}_S^{2,2}$ are both minimum energy states.
\end{enumerate}
\underline{Case B}: Mutual attraction (${\gamma_{12}<0}$).
\begin{enumerate}
    \item If $\gamma_{11}> -\gamma_{12}-1$ then $\mathcal{S}_H$ and $\mathcal{S}_A^{\infty}$ are minimum energy states.
    \item If $\gamma_{11}< -\gamma_{12}-1$ then the minimum energy state is $\mathcal{S}_A^{\infty}$.
\end{enumerate}
\underline{Case C}: Self attraction (${\gamma_{11}<0}$) and mutual avoidance (${\gamma_{12}>0})$.
\begin{enumerate}
    \item If $\gamma_{11}> 2\gamma_{12}-1$ then $\mathcal{S}_H$, $\mathcal{S}_S^{\infty,\infty}$ and $\mathcal{S}_S^{1,\infty}$ are minimum energy states.
    \item If $\gamma_{12}-1<\gamma_{11}<2 \gamma_{12}-1$ then $\mathcal{S}_H$, $\mathcal{S}_S^{\infty,\infty}$, $\mathcal{S}_S^{1,\infty}$, and $\mathcal{S}_S^{2,2}$ are minimum energy states.
    \item If $-1<\gamma_{11}< \gamma_{12}-1$ then $\mathcal{S}_S^{\infty,\infty}$, $\mathcal{S}_S^{1,\infty}$, and $\mathcal{S}_S^{2,2}$ are minimum energy states.
    \item If $\gamma_{11}< -1$ then  $\mathcal{S}_S^{\infty,\infty}$ and $\mathcal{S}_S^{1,\infty}$ are minimum energy states.
\end{enumerate}
\end{lemma}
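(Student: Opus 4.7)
The plan is to follow the template used in Sections 4.1 and 4.2, now accommodating the self-interaction terms. Working in the local limit, I would substitute the piecewise constant ansatz \eqref{eq:ui} into the local energy \eqref{eq:energy-local} and use the mass constraint \eqref{uic}, together with the specialisation $p_i=D_i=L=1$ and $\gamma_{11}=\gamma_{22}$, to reduce the problem to minimising
\begin{linenomath*}\begin{equation*}
\mathcal{E}(u_1^c,u_2^c) = \ln u_1^c + \ln u_2^c + \tfrac{1}{2}\gamma_{11}(u_1^c+u_2^c) + \gamma_{12}\,u_1^c u_2^c\,|S_1\cap S_2|
\end{equation*}\end{linenomath*}
over the admissible cone $\{u_i^c\geq 1\}$. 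The overlap $|S_1\cap S_2|$ is chosen optimally as in Sections 4.1 and 4.2: it equals $\max\{0,|S_1|+|S_2|-1\}$ when $\gamma_{12}>0$ (Cases A and C) and $\min\{|S_1|,|S_2|\}=1/\max\{u_1^c,u_2^c\}$ when $\gamma_{12}<0$ (Case B), where $|S_i|=1/u_i^c$.

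Having reduced to this two-variable problem, I would test each candidate minimiser via Taylor expansions. The homogeneous corner $\mathcal{M}_H=(1,1)$ corresponds to $\mathcal{S}_H$, and one-sided expansions into the admissible region yield the thresholds $\gamma_{11}>2\gamma_{12}-1$ (Case~A) and $\gamma_{11}>-\gamma_{12}-1$ (Cases B, C); these would be cross-checked against the linear stability condition from Section \ref{sec:linear_analysis}, retaining whichever is more restrictive. The segregation point $\mathcal{M}_S=(2,2)$ on the curve $|S_1|+|S_2|=1$ is analysed by two-sided Taylor expansions across that curve, yielding the complementary threshold for $\mathcal{S}_S^{2,2}$. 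The blow-up states $\mathcal{S}_A^{\infty}$, $\mathcal{S}_S^{\infty,\infty}$, $\mathcal{S}_S^{1,\infty}$ are then handled as in Section 4.2 by sending one or both of $u_1^c,u_2^c\to\infty$: along a diagonal direction with fully overlapping support ($\mathcal{S}_A^\infty$) when $\gamma_{12}<0$, along a diagonal direction with disjoint support ($\mathcal{S}_S^{\infty,\infty}$) when $\gamma_{11}<0$ and $\gamma_{12}>0$, and along an asymmetric coordinate direction ($\mathcal{S}_S^{1,\infty}$). Leading-order asymptotics of $\mathcal{E}$ in each direction decide whether $\mathcal{E}\to-\infty$ and produce the remaining thresholds.

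The main obstacle is Case~C, where up to four qualitatively distinct minimisers coexist. Because self-attraction makes $\mathcal{E}$ unbounded below along several directions simultaneously, determining which blow-up branches are minima in a given subregion of the $(\gamma_{12},\gamma_{11})$-plane requires comparing leading-order asymptotics along several arcs of the admissible boundary $\{u_i^c=1\}\cup\{|S_1|+|S_2|=1\}$. In particular, distinguishing the symmetric $\mathcal{S}_S^{\infty,\infty}$ from the asymmetric $\mathcal{S}_S^{1,\infty}$ amounts to deciding whether $\mathcal{E}$ decreases more rapidly along the diagonal or along a single-coordinate direction, and this comparison is where the thresholds $\gamma_{11}=\gamma_{12}-1$ and $\gamma_{11}=-1$ would emerge.
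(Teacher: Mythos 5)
Your overall strategy is the same as the paper's (Appendix \ref{appendix:A}): reduce to the two-variable function $\mathcal{E}(u_1^c,u_2^c)$ with the optimal overlap rule, test $\mathcal{M}_H=(1,1)$ and $\mathcal{M}_S=(2,2)$ by one-sided Taylor expansions, handle the unbounded states by sending $u_1^c$ and/or $u_2^c$ to infinity along the appropriate boundary arcs, and cross-check the homogeneous state against the linear stability condition \eqref{eq:turingcondition}. That machinery, applied as in Sections \ref{sec:analytic1} and \ref{sec:analytic2}, does yield the Lemma.

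However, your final paragraph misidentifies where the Case C thresholds come from, and this is precisely the part you flag as the crux. There is no competition between $\mathcal{S}_S^{\infty,\infty}$ and $\mathcal{S}_S^{1,\infty}$: both are local minimisers throughout Case C (for every $\gamma_{11}<0$, $\gamma_{12}>0$), since $\mathcal{E}\to-\infty$ both along the diagonal in the disjoint region and along each boundary arc $u_i^c=1$ whenever $\gamma_{11}<0$; local minimality does not require comparing rates of decrease along different directions. The threshold $\gamma_{11}=-1$ is instead the condition $1+\gamma_{11}\geq 0$ arising from the first-order Taylor expansion of $\mathcal{E}$ at the \emph{finite} segregation point $\mathcal{M}_S=(2,2)$ into the region $\lvert S_1\rvert+\lvert S_2\rvert\leq 1$; it governs when $\mathcal{S}_S^{2,2}$ (not the blow-up branches) ceases to be a local minimum. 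The threshold $\gamma_{11}=\gamma_{12}-1$ is the linear stability boundary for $\mathcal{S}_H$ from \eqref{eq:turingcondition}, which supersedes the weaker energy condition $\gamma_{11}>-2$ obtained from the expansion at $(1,1)$. A smaller instance of the same confusion appears in Case A: the threshold $2\gamma_{12}-1$ comes from the expansion of $\mathcal{E}$ at $\mathcal{M}_S$ into the overlapping region (it decides whether $\mathcal{S}_S^{2,2}$ is a local minimum), not from the homogeneous corner, where $\mathcal{M}_H$ is a local minimum for all $\gamma_{11}>0$. With these attributions corrected, your plan reproduces the paper's proof.
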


% JRP: this figure seems a bit superfluous
% 	\begin{figure}[H]
%		\centering
%		{\includegraphics[width=1\textwidth]{}}
%		\caption{Comparison between numerically computed stationary % $ \mathcal{S}_{H,\infty} $ solutions to Equation \eqref{eq:system}, with $ K=K_{\alpha} $, for different values of $ \alpha $. The parameter values are $p_1=p_2=D_1=D_2=1$, $ L=1 $, $ \gamma_{1 1}=-0.15 $, $ \gamma_{1 2}=0.8 $.}
%		\label{fig:Mt_Infinity}
%	\end{figure}
\begin{figure}[H]
	\centering
	\subfloat[$ \mathcal{S}_S^{2,2}$ solution with $ \gamma_{11}=0.2 $]
	{\includegraphics[width=.32\textwidth]{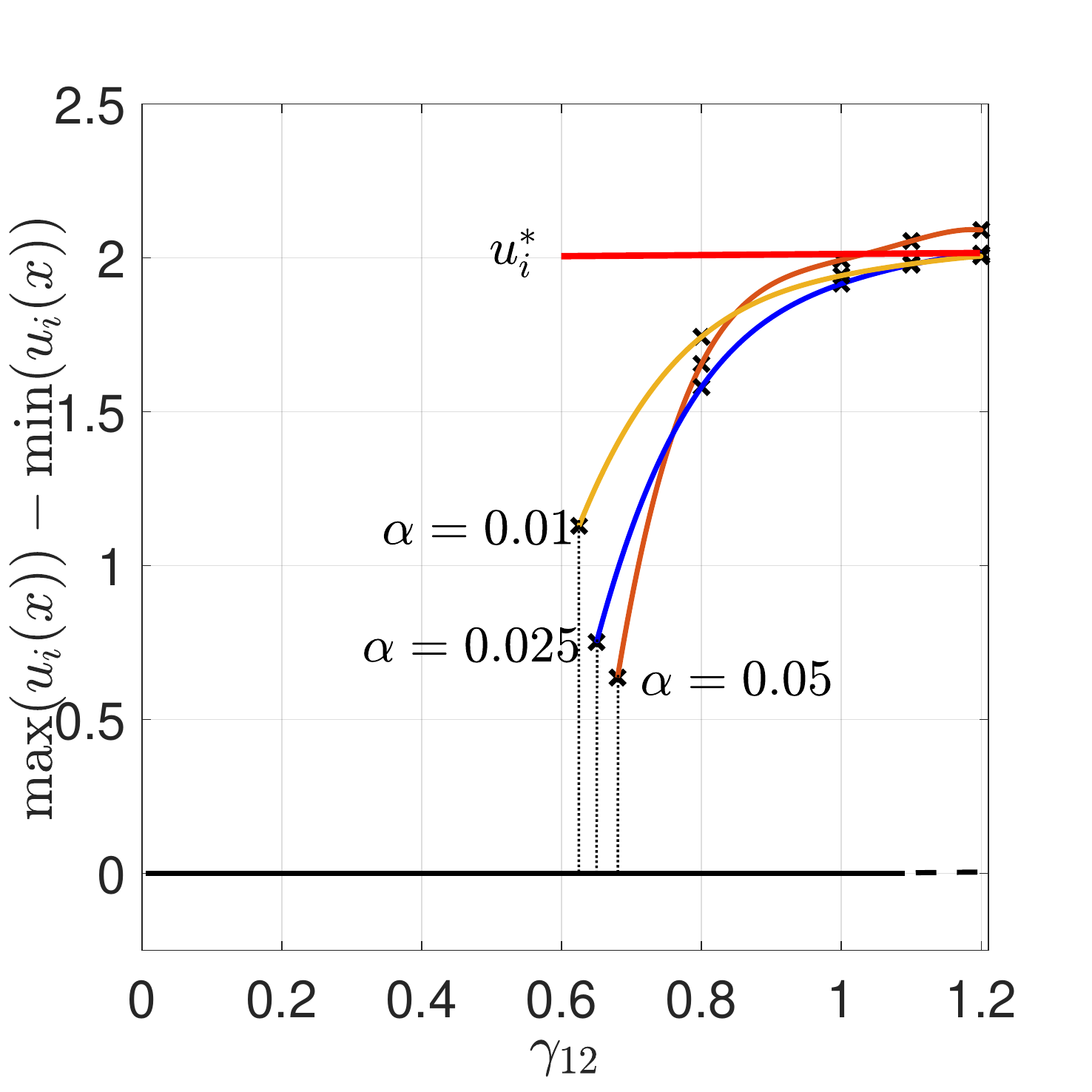}\label{fig:BD_mutualself_avoidance}}
	\hspace{0.0cm}
	\subfloat[$ \mathcal{S}_A^{\infty}$ solution with $\gamma_{11}=0.2 $]
	{\includegraphics[width=.32\textwidth]{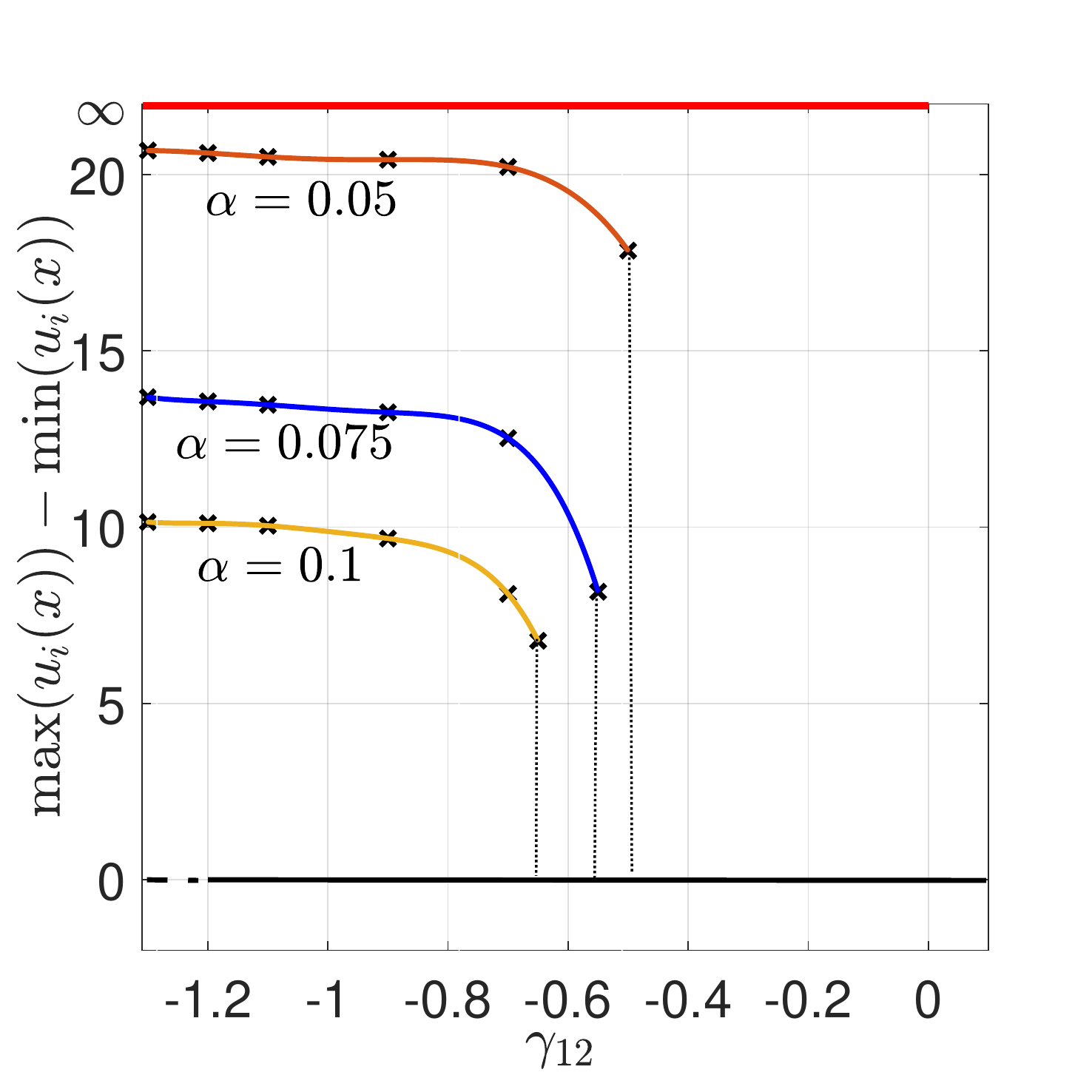}\label{fig:BD_mutual_attraction_self_avoidance}}
	\hspace{0.0cm}
	\subfloat[$\mathcal{S}_A^{\infty}$ solution with $\gamma_{11}=-0.2 $]
	{\includegraphics[width=.32\textwidth]{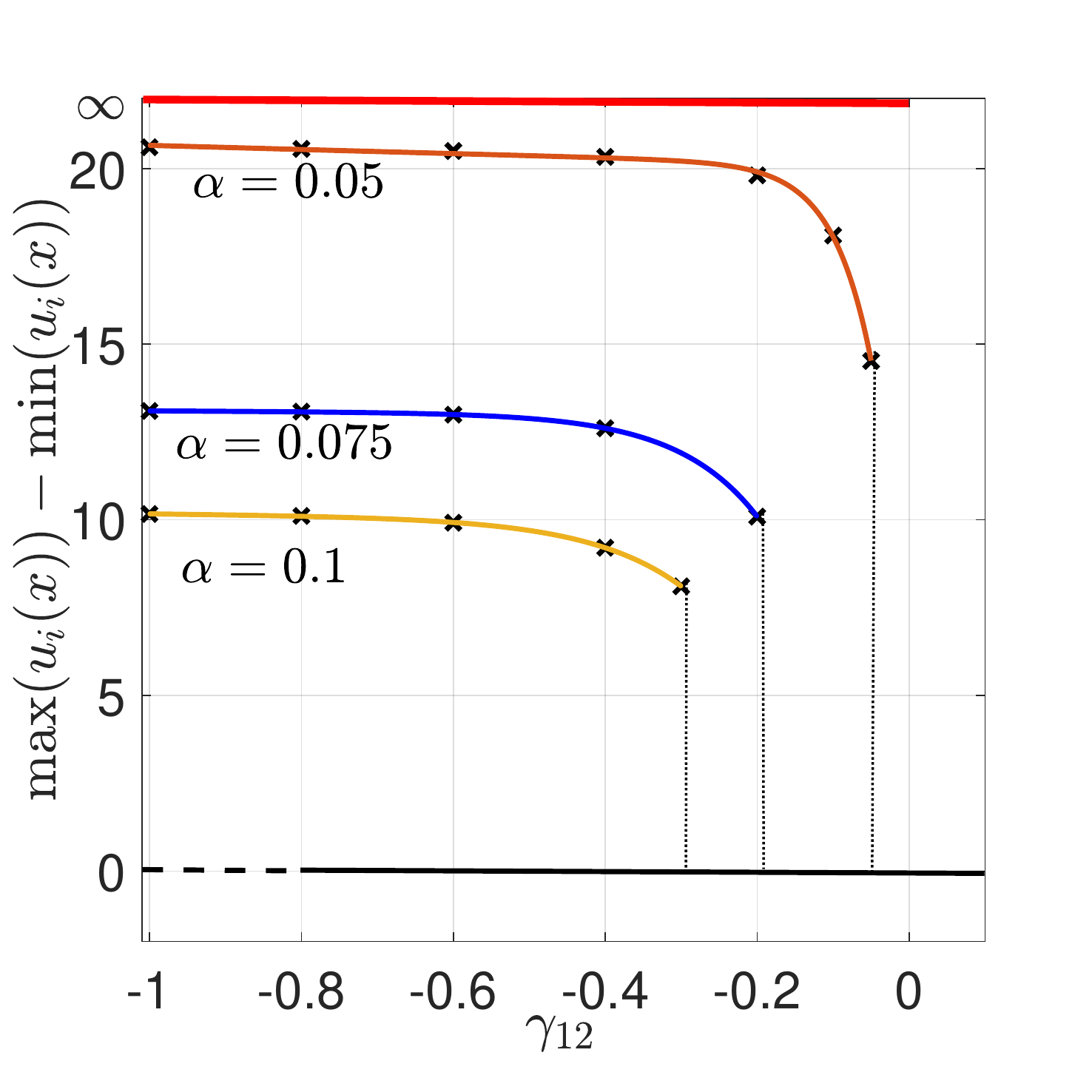}\label{fig:BD_mutualself_attraction}}
	%		\hspace{0.0cm}
	%		\subfloat[$ \gamma_{11}=-0.2 $]
	%		{\includegraphics[width=1\textwidth]{}}
	\caption{Bifurcation diagrams of Equation \eqref{eq:system}, with $ K=K_{\alpha} $ (Equation \eqref{eq:top-hat}), for different values of $ \gamma_{11} $, as $ \alpha $ is decreased. The other parameter values are $p_1=p_2=D_1=D_2=1$, $ L=1 $. Panel (a) shows hysteresis between the homogeneous steady state $ \mathcal{S}_H $ (in black) and the stationary solution $ \mathcal{S}_S^{2,2} $ for different values of $ \alpha $. As $ \alpha $ decreases, the numerical branches tend towards the analytically-predicted branch (in red). Panels (b) and (c) show hysteresis between the homogeneous steady state $ \mathcal{S}_H $ (in black) and the stationary solution $ \mathcal{S}^{\infty}_A $ for different values of $ \alpha $. As $ \alpha $ decreases, the height of the numerical branches tends towards $\infty$}
	\label{fig:BD} 
\end{figure}

\begin{figure}[H]
	\centering
	{\includegraphics[width=1\textwidth]{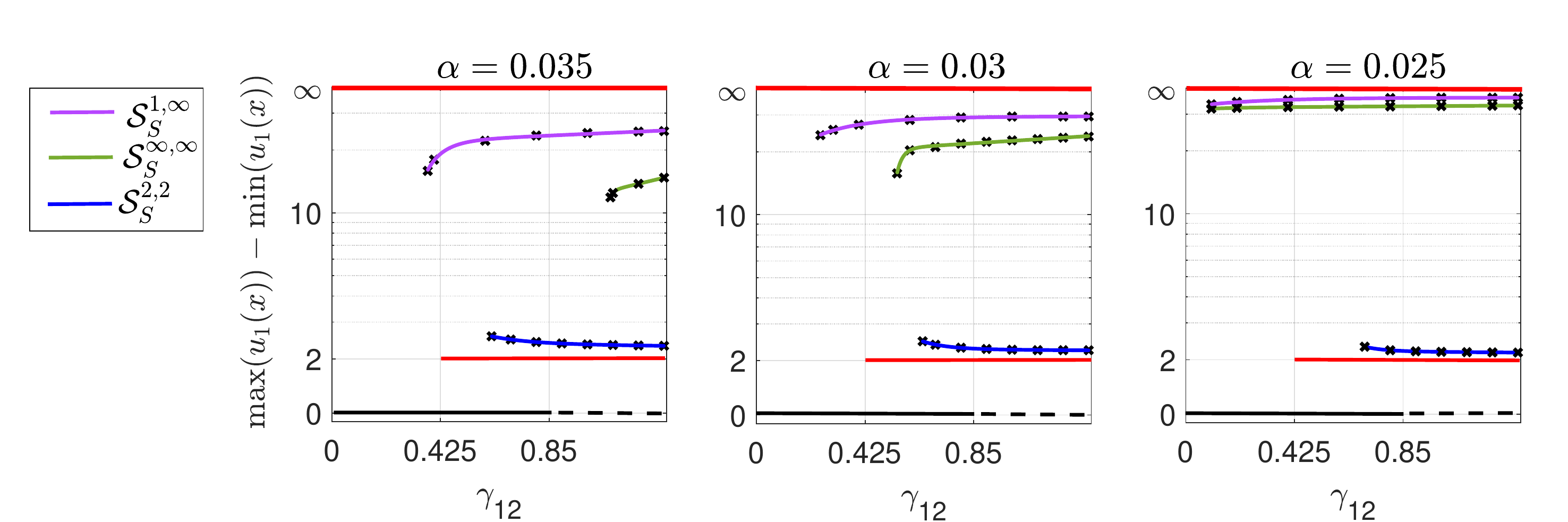}}
	\caption{Bifurcation diagrams of Equation \eqref{eq:system}, with $ K=K_{\alpha} $ (Equation \eqref{eq:top-hat}), for $ \gamma_{11}=-0.15 $ and different values of $ \alpha $. The other parameter values are $p_1=p_2=D_1=D_2=1$, $ L=1 $. The graphs show the coexistence between the homogeneous steady state $ \mathcal{S}_H $ (in black), computed analytically, and the stationary solutions $ \mathcal{S}_S^{2,2} $ (in blue), $ \mathcal{S}^{\infty}_S $ (in green) and  $ \mathcal{S}_{H}^{\infty} $ (in violet), computed numerically. As $\alpha$ decreases, the numerical branches tend towards the analytical branches (in red).}
	\label{fig:BD_mutual_avoidance_self_attraction} 
\end{figure}

\section{The steady states in the local limit}%when $K$ is a Dirac-$\delta$ function}
\label{sec:grobner}
In the previous section, we found piecewise constant energy minimisers of the local limit of Equation (\ref{eq:system}).  These can attain only a discrete set of values. Here, we confirm this observation by showing that, on each subinterval where the solution is differentiable, it must be constant. 

% The arguments of Section \ref{sec:struct} rely on the idea that the stable steady state tends towards a piece-wise constant function as $K$ becomes arbitrarily narrow.  Here we give a method to ascertain the validity of this assumption.  Specifically, our method can be used to show the non-existence of non-constant differentiable steady-state solutions.  Our approach is to show that, if $K$ is the Dirac-$\delta$ function, the image of the stable steady state is contained in a finite set, from whence it follows that any differentiable steady state solution must be constant. However, even if non-constant differentiable solutions are ruled-out, it may yet be possible to have weakly-defined piece-wise constant solutions, which are those suggested by numerics (e.g. Figure \ref{fig:SolutionsDifferentAlpha}). 

% %A key limitation of the method in Section \ref{sec:grobner} is that it assumes that the flux of the steady state is zero, which we were unable to relax, but nevertheless seems to correspond to our numerical observations. Given this assumption, f

For $N=2$ we prove that the image of any minimum energy solution must lie in a finite set. This proof works for any parameter values $D_i$ and $\gamma_{ij}$.  We were not, however, able to prove this result in full generality for arbitrary $N$.  Nonetheless, we do provide a method for constructing a proof for any particular set of parameter values, and put these ideas into practice in some example cases where $N=3$.  %This method makes use of Gr\:obner bases.}

\subsection{The general setup}

Let $K(x)=\delta(x)$, the Dirac delta function with mass concentrated at $x=0$.  Then in one spatial dimension Equation (\ref{eq:system}) becomes
\begin{linenomath*}\begin{align}
		\label{1Dmodel_loc}
		\frac{\partial u_i}{\partial t} &= D_i\frac{\partial^2 u_i}{\partial x^2} + \frac{\partial}{\partial x}\left(u_i \sum_{j=1}^N\gamma_{ij}\frac{\partial u_j}{\partial x}\right), \, i=1,\dots, N.
\end{align}\end{linenomath*}
Any local minimum energy solution to Equation (\ref{1Dmodel_loc}) is given by a set of functions $u_1 (x),\dots,u_N (x)$ that solve Equation \eqref{eq:steady_state} for each $i \in \{1,\dots,N\}$ with $K(x)=\delta(x)$. We therefore require that, on any subinterval where $u_i(x)\neq 0$,
%	\begin{linenomath*}\begin{align}
%				\label{1Dmodel_loc_ss}
%				0 &= D_i\frac{{\rm d}^2 u_i }{{\rm d} x^2} + \frac{{\rm d}}{{\rm d} x}\left(u_i  \sum_{j=1}^N\gamma_{ij}\frac{{\rm d} u_j }{{\rm d} x}\right).
%	\end{align}\end{linenomath*}
%	Integrating with respect to $x$ gives
%	\begin{linenomath*}\begin{align}
%		\label{1Dmodel_loc_ss0}
%		c_i &= D_i\frac{{\rm d} u_i }{{\rm d} x} + u_i  \sum_{j=1}^N\gamma_{ij}\frac{{\rm d} u_j }{{\rm d} x},
%	\end{align}\end{linenomath*}
%	for constants $c_i$.  Recalling our assumption that the flux of the steady state is zero, we have
%{\color{red}[JRP: throughout the paper, we have used periodic boundary conditions, so this switch to zero flux seems weird. Can we get around this?]}, we have
%JRP: this comment is not right... I think we can get around this as follows. If $\det(A_1) \neq 0$ then we have ${\rm d}{\bf u} /{\rm d}x=$constant.  So either we have $\det(A_1)=0$ or solutions that are straight lines (of possibly non-zero gradient).  However, if we plug straight-line solutions into Equation \eqref{1Dmodel_loc_ss}, I think (and you should check this!) we get that the lines must be zero gradient, so ${\rm d}{\bf u} /{\rm d}x=0$, so the solution is constant. Check this is correct and if it is, please write it up!]}
\begin{linenomath*}\begin{align}
		\label{eq:steady_state_loc}
		0 = \frac{{\rm d} }{{\rm d} x}\left(D_i \text{ln}(u_i ) + \sum_{j=1}^N\gamma_{ij} u_j \right)=\frac{D_i}{u_i }\frac{{\rm d} u_i }{{\rm d} x} + \sum_{j=1}^N\gamma_{ij} \frac{{\rm d} u_j }{{\rm d} x},
\end{align}\end{linenomath*}
which implies that
\begin{linenomath*}\begin{align}
		\label{1Dmodel_loc_ss0}
		0 &= D_i\frac{{\rm d} u_i }{{\rm d} x} + u_i  \sum_{j=1}^N\gamma_{ij}\frac{{\rm d} u_j }{{\rm d} x}.
\end{align}\end{linenomath*}
Equation (\ref{1Dmodel_loc_ss0}) can be written in matrix form as
\begin{linenomath*}\begin{align}\label{1Dmodel_loc_ss0_mat}
		0 &= A_1\frac{{\rm d} {\bf u} }{{\rm d} x},  \\\nonumber			
		&\text{where } A_1:=\left(\begin{array}{cccc}D_1+\gamma_{11}u_1  & \gamma_{12}u_1  & \dots & \gamma_{1N}u_1  \\
			\gamma_{21}u_2  & D_2+\gamma_{22}u_2  & \dots & \gamma_{2N}u_2   \\
			\vdots & \vdots & \ddots & \vdots \\
			\gamma_{N1}u_N  &  \gamma_{N2}u_N  & \dots & D_N+\gamma_{NN}u_N   
		\end{array}\right),
\end{align}\end{linenomath*}
and ${\bf u} =(u_1 ,...,u_N )^T$. Equation (\ref{1Dmodel_loc_ss0_mat}) holds on each subinterval where $u_i(x)\neq 0$.  We wish to show that  differentiable solutions are necessarily constant.  Equation (\ref{1Dmodel_loc_ss0_mat}) only has a nontrivial solution if either $\det (A_1)=0$ or $\frac{\partial {\bf u} }{\partial x}=0$.  The latter means that ${\bf u} $ is constant, so we need to investigate the condition $\det (A_1)=0$.  

%\textcolor{hillencolor}{TH: I think the standard form for determinant is a lower case det. There is also a $\backslash  \rm{det}$ command: $\det A$, I replaced them all.}  

\subsection{The case $N=2$}
\label{sec:N2}

To make things simple, we begin by focusing on the case $N=2$.  We use the notation $A_1^{(2)}$ to mean the matrix $A_1$ (Equation  \eqref{1Dmodel_loc_ss0_mat}) for $N=2$, so that
\begin{linenomath*}\begin{align}
		\label{A1N2}
		A_1=A_1^{(2)}:=\left(\begin{array}{cc}D_1+\gamma_{11}u_1  & \gamma_{12}u_1 \\
			\gamma_{21}u_2  & D_2+\gamma_{22}u_2 
		\end{array}\right).
\end{align}\end{linenomath*}
The condition $\det (A_1^{(2)})=0$ then implies 
\begin{linenomath*}\begin{align}
		\label{detA1N2}
		(D_1+\gamma_{11}u_1 )(D_2+\gamma_{22}u_2 )-\gamma_{12}\gamma_{21}u_1 u_2 =0.
\end{align}\end{linenomath*}
If ${\bf u} $ is differentiable then we can differentiate Equation (\ref{detA1N2}) with respect to $x$, leading to the following
\begin{linenomath*}\begin{align}
		\label{detA1N2diff}
		[\gamma_{11}(D_2+\gamma_{22}u_2 )-\gamma_{12}\gamma_{21}u_2 ]\frac{{\rm d}u_1 }{{\rm d}x}+[\gamma_{22}(D_1+\gamma_{11}u_1 )-\gamma_{12}\gamma_{21}u_1 ]\frac{{\rm d}u_2 }{{\rm d}x}=0.
\end{align}\end{linenomath*}
Combining Equation (\ref{detA1N2diff}) with the first row of the vector equation $A_1^{(2)} \frac{{\rm d}{\bf u} }{{\rm d}x}=0$ gives
\begin{linenomath*}\begin{align}
		\label{A2N2}
		0 &= A_2^{(2)}\frac{{\rm d} {\bf u} }{{\rm d} x},\quad\mbox{where} \nonumber \\
		A_2^{(2)}&:=\left(\begin{array}{cc}\gamma_{11}(D_2+\gamma_{22}u_2 )-\gamma_{12}\gamma_{21}u_2  & \; \gamma_{22}(D_1+\gamma_{11}u_1 )-\gamma_{12}\gamma_{21}u_1 \\D_1+\gamma_{11}u_1  & \gamma_{12}u_1 
		\end{array}\right).
\end{align}\end{linenomath*}
Then $\left\{\det (A_1^{(2)})=0, \det (A_2^{(2)})=0\right\}$ is a system of two simultaneous equations in two unknowns.  These have at most three solutions, as we show in Appendix \ref{appendix:C}. 

The exact form of these solutions is rather cumbersome, so we omit writing them down explicitly.  However, it is instructive to give a simple example, which we do in the case $\gamma_{11}=\gamma_{22}=0$.  Here, there is a single solution to $\left\{\det (A_1^{(2)})=0, \det (A_2^{(2)})=0\right\}$ of the following form
\begin{linenomath*}\begin{align}
		\label{N2u1u2}
		u_1 =\frac{D_2}{\gamma_{21}},\quad 
		u_2 =\frac{D_1}{\gamma_{12}}.
\end{align}\end{linenomath*}
Regardless of whether or not we impose the condition $\gamma_{11}=\gamma_{22}=0$, the solution set of $(u_1 ,u_2 )$ is a finite set.  Therefore each differentiable part of a solution of Equation (\ref{1Dmodel_loc_ss0}) is constant.

\subsection{The case $N=3$}

{\noindent}We now show how to extend the arguments of Section \ref{sec:N2} to the $N=3$ case.  The expressions become too complicated in $N=3$ to give a complete analysis, so we instead give some examples to demonstrate how one can ascertain whether the or not image of ${\bf u} (x)$ is contained in a finite set.    Similar to the strategy for $N=2$, the aim is to construct a system of equations that constrain the possible solutions for ${\bf u} (x)$.  For $N=3$, this involves constructing three equations, which each take the form $\det (A_i^{(3)})=0$ for some matrix $A_i^{(3)}$ ($i \in \{1,2,3\}$), and showing that this set of simultaneous equations has a finite number of solutions. Whist for $N=2$, we were able to calculate the number of solutions exactly by solving polynomial equations, this is not possible for $N=3$ as the polynomials are usually of order 5 or more \citep{stewart2015galois}.  Instead, we use the theory of Gr\"obner bases to prove the solution set is finite.

\subsubsection{Example 1}
\label{sec:ex1}

{\noindent}For this example, we let $D_i=1$, $\gamma_{ii}=0$,  $\gamma_{12}=\gamma_{21}=\gamma_{23}=\gamma_{32}=2$, and $\gamma_{13}=\gamma_{31}=1$. Then 
\begin{linenomath*}\begin{align}
		\label{A13}
		A_1=A_1^{(3)}:=\left(\begin{array}{ccc}1 & 2u_1  &  u_1 \\
			2u_2  & 1 & 2u_2  \\
			u_3  & 2u_3  & 1 \\
		\end{array}\right).
\end{align}\end{linenomath*}
Since $\det (A_1^{(3)})=0$, we have
\begin{linenomath*}\begin{align}
		\label{detA13}
		0=1+8 u_1 u_2 u_3 -4u_1 u_2 -4u_2 u_3 -u_1 u_3 .
\end{align}\end{linenomath*}
Again, assuming ${\bf u} $ is differentiable, we can differentiate Equation (\ref{detA13}) with respect to $x$ leads to the following 
\begin{linenomath*}\begin{align}
		\label{detA13diff}
		0=\frac{{\rm d}u_1 }{{\rm d}x}(8 u_2 u_3 -4u_2 -u_3 )+\frac{{\rm d}u_2 }{{\rm d}x}(8 u_1 u_3 -4u_1 -4u_3 )+\frac{{\rm d}u_3 }{{\rm d}x}(8 u_1 u_2 -u_1 -4u_2 ).
\end{align}\end{linenomath*}
Combining Equation (\ref{detA13diff}) with the first two rows of $A_1^{(3)} \frac{{\rm d}{\bf u} }{{\rm d}x}=0$ gives
\begin{linenomath*}\begin{align}
		\label{A23}
		0 &= A_2^{(3)}\frac{{\rm d} {\bf u} }{{\rm d} x},\\\nonumber 
		& \text{where } A_2^{(3)}:=\left(\begin{array}{ccc}
			8 u_2 u_3 -4u_2 -u_3 & 8 u_1 u_3 -4u_1 -4u_3 &8 u_1 u_2 -u_1 -4u_2  \\
			1 & 2u_1  &  u_1 \\
			2u_2  & 1 & 2u_2  
		\end{array}\right).
\end{align}\end{linenomath*}
Once again, we have that $\det (A_2^{(3)})=0$, leading to the following polynomial equation
\begin{linenomath*}\begin{align}
		\label{detA23}
		0=&-u_1  -4 u_2  +20 u_1  u_2  -4 (u_1 )^2 u_2  - 32 (u_1 )^2 (u_2 )^2 + u_1  u_3  + 8 u_2  u_3  \nonumber \\
		& -36 u_1  u_2  u_3  + 16 (u_1 )^2 u_2  u_3  +32 u_1  (u_2 )^2 u_3.
\end{align}\end{linenomath*}
Differentiating Equation (\ref{detA23}) with respect to $x$ gives
\begin{linenomath*}\begin{align}
		\label{detA23diff1}
		0&=\frac{{\rm d}u_1 }{{\rm d}x}B_1(u_1 ,u_2 ,u_3 )+\frac{{\rm d}u_2 }{{\rm d}x}B_2(u_1 ,u_2 ,u_3 )+\frac{{\rm d}u_3 }{{\rm d}x}B_3(u_1 ,u_2 ,u_3 ),
\end{align}\end{linenomath*}
where
\begin{linenomath*}\begin{align}
		\label{detA23diff2}
		B_1(u_1 ,u_2 ,u_3 )=&-1 +20 u_2  -8 u_1  u_2  -64 u_1  (u_2 )^2 + u_3  -36 u_2  u_3  \nonumber
		\\&+32 u_1  u_2  u_3  +32 (u_2 )^2 u_3  \\
		B_2(u_1 ,u_2 ,u_3 )=&-4 + 20 u_1  -4 (u_1 )^2 - 64 (u_1 )^2 u_2  + 8 u_3  - 36 u_1  u_3  \nonumber
		\\& + 16 (u_1 )^2 u_3  + 64 u_1  u_2  u_3 \\
		B_3(u_1 ,u_2 ,u_3 )=&u_1  -36 u_1  u_2  + 16 (u_1 )^2 +32 u_1  (u_2 )^2.
\end{align}\end{linenomath*}
Combining Equation (\ref{detA23diff1}) with the first two rows of $A_2^{(3)} 
\frac{{\rm d}{\bf u} }{{\rm d}x}=0$ gives
\begin{linenomath*}\begin{align}
		\label{A33}
		0 &= A_3^{(3)}\frac{{\rm d} {\bf u} }{{\rm d} x},\quad\mbox{where} \nonumber \\
		A_3^{(3)}&:=\left(\begin{array}{ccc}
			B_1(u_1 ,u_2 ,u_3 ) & B_2(u_1 ,u_2 ,u_3 ) & B_3(u_1 ,u_2 ,u_3 ) \\
			8 u_2 u_3 -4u_2 -u_3 &8 u_1 u_3 -4u_1 -4u_3 &8 u_1 u_2 -u_1 -4u_2  \\
			1 & 2u_1  &  u_1 
		\end{array}\right).
\end{align}\end{linenomath*}
We now have a set of three polynomials 
\begin{linenomath*}
\begin{equation}S=\left\{\det (A_1^{(3)}),\det (A_2^{(3)}),\det (A_3^{(3)})\right\} 
\end{equation}
\end{linenomath*}
such that the image of ${\bf u} (x)$ must lie on the common zeros of this set.  In the $N=2$ case (Section \ref{sec:N2}), we had just two polynomials, both of which were cubics, thus it is possible to find formulae for the common zeros.  Here, however, we have a polynomial of degree six ($\det (A_3^{(3)})$). Since there is no general solution to a sixth degree polynomial \citep{stewart2015galois}, we cannot solve the system $\det (A_1^{(3)})=0,\det (A_2^{(3)})=0,\det (A_3^{(3)})=0$ directly.  

Instead, we use a classical result from algebraic geometry, which says that the number of common zeros of $S$ is finite iff for each $i \in \{1,2,3\}$, the Gr\"obner basis of the ideal $I(S)$ generated by $S$ contains a polynomial whose leading monomial is a power of $u_i$ \citep{adams1994introduction}.  Computation of the Gr\"obner basis of an ideal generated by a set of polynomials is an algorithmic procedure that is encoded into various mathematical packages, such as Mathematica \citep{wolfram1999mathematica} or Macauley2 \citep{eisenbud2013computations}.  

We use Mathematica to calculate the Gr\"obner basis of $I(S)$. The result is a set of five polynomials whose leading monomials are $ \beta_1 u_3^{19}$, $ \beta_2 u_2 u_3^2$, $ \beta_3 u_2^2 u_3$, $\beta_4 u_2^4$  and $\beta_5 u_1$, where $\beta_1,\dots,\beta_5$ are constants (some of which are of the order $10^{26}$ so we refrain from writing down their exact numerical values).  For each $i$, there is a polynomial in the Gr\"obner basis whose leading monomial is a power of $u_i$.  Therefore, the common zeros of $S$ are finite and the image of ${\bf u} (x)$ is contained in a finite set.  Since we have assumed ${\bf u} (x)$ is differentiable, it must also be constant.  
%$ 4096 u_3^{19}$, $ \approx 10^{26} u_2 u_3^2$, $ \approx12\times 10^{25} u_2^2 u_3$, $ \approx 7\times 10^{28} u_2^4$  and $ \approx 15\times 10^{25} u_1$

\subsubsection{Example 2}
\label{sec:ex2}

{\noindent}In the previous example, we were able to show that the image of ${\bf u} (x)$ is contained in a finite set by showing it lies on the intersection of three polynomials, which is the minimum number of polynomials required in the case $N=3$.  However, sometimes three polynomials is not enough.  Here, we detail an example which requires the construction of five polynomials to ensure the intersection of their zeros is a finite set.

Suppose $D_i=1$, $\gamma_{ii}=0$, and $\gamma_{ij}=2$ for all $i,j \in \{1,2,3\}$ where $i \neq j$.  Then 
\begin{linenomath*}\begin{align}
		\label{A1N3b}
		A_1=A_1^{(3)}:=\left(\begin{array}{ccc}1 & 2u_1  &  2u_1 \\
			2u_2  & 1 & 2u_2  \\
			2u_3  & 2u_3  & 1 \\
		\end{array}\right).
\end{align}\end{linenomath*}
Since $\det (A_1^{(3)})=0$, we have
\begin{linenomath*}\begin{align}
		\label{detA1N3}
		0=1+16 u_1 u_2 u_3 -4u_1 u_2 -4u_1 u_3 -4u_2 u_3 .
\end{align}\end{linenomath*}
Differentiating Equation (\ref{detA1N3}) with respect to $x$ leads to the following 
\begin{linenomath*}\begin{align}
		\label{detA1N3diff}
		0=\frac{{\rm d}u_1 }{{\rm d}x}(4 u_2 u_3 -u_2 -u_3 )+\frac{{\rm d}u_2 }{{\rm d}x}(4 u_1 u_3 -u_1 -u_3 )+\frac{{\rm d}u_3 }{{\rm d}x}(4 u_1 u_2 -u_1 -u_2 )
\end{align}\end{linenomath*}
Combining Equation (\ref{detA1N3diff}) with the first two rows of $A_1^{(3)} \frac{{\rm d}{\bf u} }{{\rm d}x}=0$ gives
\begin{linenomath*}\begin{align}
		\label{A2N3}
		0 &= A_2^{(3)}\frac{{\rm d} {\bf u} }{{\rm d} x},\quad\mbox{where} \nonumber \\
		A_2^{(3)}&:=\left(\begin{array}{ccc}
			4 u_2 u_3 -u_2 -u_3 &4 u_1 u_3 -u_1 -u_3 &4 u_1 u_2 -u_1 -u_2  \\
			1 & 2u_1  &  2u_1 \\
			2u_2  & 1 & 2u_2  
		\end{array}\right).
\end{align}\end{linenomath*}
Once again, we have that $\det (A_2^{(3)})=0$, leading to the following polynomial equation
\begin{linenomath*}\begin{align}
		\label{detA2N3}
		0=(4 u_2 u_3 -u_2 -u_3 )(4u_1 u_2 -2u_1 )&+(4u_1 u_3 -u_1 -u_3 )(4u_1 u_2 -2u_2 )\nonumber \\
		&+(4 u_1 u_2 -u_1 -u_2 )(1-4u_1 u_2 ).
\end{align}\end{linenomath*}
Differentiating Equation (\ref{detA2N3}) with respect to $x$ gives
\begin{linenomath*}\begin{align}
		\label{detA2N3diff1}
		0&=\frac{{\rm d}u_1 }{{\rm d}x}B_1(u_1 ,u_2 ,u_3 )+\frac{{\rm d}u_2 }{{\rm d}x}B_2(u_1 ,u_2 ,u_3 )+\frac{{\rm d}u_3 }{{\rm d}x}B_3(u_1 ,u_2 ,u_3 ),
\end{align}\end{linenomath*}
where
\begin{linenomath*}\begin{align}
		\label{detA2N3diff2}
		B_1(u_1 ,u_2 ,u_3 )&=(4 u_2  u_3 -u_2  -u_3 )(4 u_2 -2)+(4 u_3 -1)(4u_1 u_2 -2u_2 )\nonumber \\
		& \qquad+(4 u_1 u_3 -u_1 -u_3 )4u_2 +(4u_2 -1)(1-4u_1 u_2 )\nonumber  \\
		& \qquad-(4u_1 u_2 -u_1 -u_2 )4u_2 \\
		B_2(u_1 ,u_2 ,u_3 )&=(4u_3 -1)(4u_1 u_2 -2u_1 )+(4u_2 u_3 -u_2 -u_3 )4u_1 \nonumber \\
		& \qquad + (4 u_1  u_3 -u_1  -u_3 )(4 u_1 -2)+(4u_1 -1)(1-4u_1 u_2 )\nonumber\\
		& \qquad-(4u_1 u_2 -u_1 -u_2 )4u_1   \\
		B_3(u_1 ,u_2 ,u_3 )&=(4u_2 -1)(4u_1 u_2 -2u_1 )+(4u_1 -1)(4u_1 u_2 -2u_2 )
\end{align}\end{linenomath*}
Combining Equation (\ref{detA2N3diff1}) with the first two rows of $A_2^{(3)} 
\frac{{\rm d}{\bf u} }{{\rm d}x}=0$ gives
\begin{linenomath*}\begin{align}
		\label{A3N3}
		0 &= A_3^{(3)}\frac{{\rm d} {\bf u} }{{\rm d} x},\quad\mbox{where} \nonumber \\
		A_3^{(3)}&:=\left(\begin{array}{ccc}
			B_1(u_1 ,u_2 ,u_3 ) & B_2(u_1 ,u_2 ,u_3 ) & B_3(u_1 ,u_2 ,u_3 ) \\
			4 u_2 u_3 -u_2 -u_3 &4 u_1 u_3 -u_1 -u_3 &4 u_1 u_2 -u_1 -u_2  \\
			1 & 2u_1  &  2u_1 
		\end{array}\right).
\end{align}\end{linenomath*}
We now have a set of three polynomials $S=\left\{\det (A_1^{(3)}),\det (A_2^{(3)}),\det (A_3^{(3)})\right\}$, such that the image of ${\bf u} (x)$ must lie on the common zeros of this set. 
%We used Mathematica to calculate the . 
The Gr\"obner basis of $I(S)$ contains eight polynomials whose leading terms are $\beta_1 u_2 u_3^9$, $\beta_2 u_2 u_3^8$, $\beta_3 u_2 u_3^8$, $\beta_4 u_2 u_3^8$, $\beta_5 u_2 u_3^8$, $\beta_6 u_2 u_3^8$, $\beta_7 u_2 u_3^8$, $\beta_8 u_2 u_3^8$ for constants $\beta_1,\dots,\beta_8$. %$65536 u_2 u_3^9$, $\approx 4\times 10^6 u_2 u_3^8$, $\approx 5 \times 10^8 u_2 u_3^8$, $\approx 7 \times 10^6 u_2 u_3^8$, $ \approx 2\times 10^6 u_2 u_3^8$, $\approx 16 \times 10^5 u_2 u_3^8$, $\approx 15 \times 10^5 u_2 u_3^8$, $ \approx 11 \times10^7 u_2 u_3^8$ 
Here, the Gr\"obner basis of $I(S)$ does not contain a polynomial a with leading monomial that is a power of $u_i $ for any $i=1,2,3$, so the common zeros of $S$ do not form a finite set. Therefore we need to search for further polynomials on which the solution lies, to see if we can constrain the solutions into a finite set.	

To this end, we combine Equation (\ref{detA1N3diff}) with the first and the third row of $A_1^{(3)} \frac{{\rm d}{\bf u} }{{\rm d}x}=0$ to give
\begin{linenomath*}\begin{align}
		\label{A22N3}
		0 &= A_{22}^{(3)}\frac{{\rm d} {\bf u} }{{\rm d} x},\quad\mbox{where} \nonumber \\
		A_{22}^{(3)}&:=\left(\begin{array}{ccc}
			4 u_2 u_3 -u_2 -u_3 &4 u_1 u_3 -u_1 -u_3 &4 u_1 u_2 -u_1 -u_2  \\
			1 & 2u_1  &  2u_1 \\
			2u_3  & 2u_3   & 1
		\end{array}\right).
\end{align}\end{linenomath*}
Since $\det (A_{22}^{(3)})=0$, we have
\begin{linenomath*}\begin{align}
		\label{detA22N3}
		0=u_1 -2 u_1 u_2 + u_3 - 8 u_1 u_3 - 2 u_2 u_3 + 24 u_1 u_2 u_3 -16 u_1^2 u_2 u_3-16 u_1^2 u_3^2-16 u_1 u_2 u_3^2.
\end{align}\end{linenomath*}
Differentiating Equation (\ref{detA22N3}) with respect to $x$ gives
\begin{linenomath*}\begin{align}
		\label{detA22N3diff1}
		0&=\frac{{\rm d}u_1 }{{\rm d}x}B_{12}(u_1 ,u_2 ,u_3 )+\frac{{\rm d}u_2 }{{\rm d}x}B_{22}(u_1 ,u_2 ,u_3 )+\frac{{\rm d}u_3 }{{\rm d}x}B_{32}(u_1 ,u_2 ,u_3 ),
\end{align}\end{linenomath*}
where
\begin{linenomath*}\begin{align}
		\label{detA22N3diff2}
		B_{12}(u_1 ,u_2 ,u_3 )&=1 - 2 u_2 - 8 u_3 + 24 u_2 u_3 - 32 u_1 u_2 u_3 +32 u_1 u_3^2 -16 u_2 u_3^2,\\
		B_{22}(u_1 ,u_2 ,u_3 )&=-2 u_2 -2 u_3 +24 u_1 u_3 -16u_1^2 u_3 -16 u_1 u_3^2, \\
		B_{32}(u_1 ,u_2 ,u_3 )&=1 - 2 u_2 - 8 u_1 + 24 u_1 u_2 - 32 u_1 u_2 u_3 + 32 u_1^2 u_3 -16 u_1^2 u_2.
\end{align}\end{linenomath*}
Combining Equation (\ref{detA22N3diff1}) with the second and third row of $A_{22}^{(3)} 
\frac{{\rm d}{\bf u} }{{\rm d}x}=0$ gives
\begin{linenomath*}\begin{align}
		\label{A32N3}
		0 &= A_{32}^{(3)}\frac{{\rm d} {\bf u} }{{\rm d} x},\quad\mbox{where} \nonumber \\
		A_{32}^{(3)}&:=\left(\begin{array}{ccc}
			B_{12}(u_1 ,u_2 ,u_3 ) & B_{22}(u_1 ,u_2 ,u_3 ) & B_{32}(u_1 ,u_2 ,u_3 ) \\
			1 & 2u_1  &  2u_1  \\
			2u_3  &  2u_3  & 1
		\end{array}\right).
\end{align}\end{linenomath*}
We now have a set of five polynomials $S=\left\{\det (A_1^{(3)}),\det (A_2^{(3)}),\det (A_3^{(3)}),\det (A_{22}^{(3)}),\det (A_{32}^{(3)})\right\}$, such that the image of ${\bf u} (x)$ must lie on the common zeros of this set.  The Gr\"obner basis of $I(S)$ consists of seven polynomials whose leading monomials are $32768 u_3^9$, $12 u_2 u_3^2$, $ 6 u_2^2 u_3$, $ 96 u_2^3$, $ -18 u_1$, $18 u_1 u_2$, $12 u_1^2$. Since, for each $i \in \{1,2,3\}$, this set contains a power of $u_i$, the common zeros of $S$ are finite, and therefore the image of ${\bf u} (x)$ is contained in a finite set.  Hence if ${\bf u} (x)$ is differentiable, it must be constant.

%Finally, we have examined the case where $D_i=1$, $\gamma_{ii}=0$, and $\gamma_{12}=\gamma_{21}=\gamma_{23}=\gamma_{32}=2$, $\gamma_{13}=\gamma_{31}=0$. Repeating the same argument explained before, we have found a set a three polynomials $S=\{\det (A_1^{(3)}),\det (A_2^{(3)}),\det (A_3^{(3)})\}$ with no common solutions and therefore ${\bf u}_\ast(x)$ cannot be a non-constant differentiable steady state solution.

\section{Discussion}\label{sec13}

%Conclusions may be used to restate your hypothesis or research question, restate your major findings, explain the relevance and the added value of your work, highlight any limitations of your study, describe future directions for research and recommendations. 

%In some disciplines use of Discussion or 'Conclusion' is interchangeable. It is not mandatory to use both. Please refer to Journal-level guidance for any specific requirements. 
%{\color{red}[JRP: The Discussion is not finished yet.  Please do not comment!!!!]}

A central aim of mathematical biology is to predict emergent features of biological systems, using dynamical systems models.  Stable steady states provide an important class of emergent features, so identification of these is a key task of mathematical biology.  However, for nonlinear PDEs, this is not usually an easy task \citep{Robinson2003}.  Indeed, often this is replaced by the more tractable task of examining a system's behaviour close to the constant steady state, which enables linear or weakly nonlinear approximations.  But it is the behaviour far away from the constant solution that is interesting biologically, as that is where the patterns exist that we perceive in biological systems.

Here, we have detailed a novel method to help find  local minimum energy states, which are Lyapunov stable, in a system of nonlocal advection-diffusion equations for modelling $N$ species (or groups) of mobile organisms, each of which move in response to the presence of others.  Our study system is closely related to (and often directly generalises) a wide variety of previous models, including those for cell aggregation \citep{carrillo2018aggregation} and sorting \citep{burger2018sorting}, animal territoriality \citep{potts2016memory} and home ranges \citep{briscoe2002home}, the co-movements of predators and prey \citep{di2016nonlocal}, and the spatial arrangement of human criminal gangs \citep{alsenafi2018convection}.  Therefore our results have wide applicability across various areas of the biological sciences.

Whilst analytic determination of stable steady states in PDEs remains a difficult task in general, numerical analysis always leaves the question open of whether one has found all possible steady states or whether there are more that the researcher has simply not stumbled upon.  To help guide numerical investigations, we have constructed a method, combining heuristic and analytic features, that gives clues as to where stable steady states might be found in multi-species nonlocal advection-diffusion systems.  We have demonstrated in a few examples that numerical investigations agree with the predictions of our method.  Whilst our method does not give an analytic solution, it should be a valuable tool for finding stable steady states in biological models that can be modelled by nonlocal advection-diffusion systems.

Our method relies on constructing an energy functional for the PDE system.  We were only able to do this in the case $\gamma_{ij}=\gamma_{ji}$ for all $i,j \in \{1,\dots,N\}$ and assuming that the kernel $K$ is identical for all species.  These constraints mean that each pair of species (or populations or groups) respond to one another in a symmetric fashion, either mutually avoiding or mutually attracting with identical strengths of avoidance or attraction, respectively.  This generalises a recent result of \citet{ellefsen2021equilibrium} who construct an energy functional for the case where $\gamma_{ij}=1$ for all $i,j \in \{1,\dots,N\}$.  We conjecture that this energy functional could be used to prove that the attractor of our study system is an unstable manifold of fixed points.  However, we were unable to prove this here, so encourage readers to take on this challenge.

Whilst it may be possible to construct energy functionals in some example situations where $\gamma_{ij}\neq\gamma_{ji}$ for some $i,j$, or where  the kernel is not identical for all species (we leave this as an open question), we expect that it is not possible in general,
%for general $\gamma_{ij}$,
since there are situations where the numerical analysis suggests the attractors do not consist of stable steady states, but patterns that fluctuate in perpetuity \citep{PL19}.  Perhaps the simplest situation where this has been observed is for $N=2$, $\gamma_{11},\gamma_{22}<0$, and $\gamma_{12}<0<\gamma_{21}$ \citep{HPLG21}, whereby both populations aggregate and one `chases' the other across the terrain without either ever settling to a fixed location. 
Furthermore, to keep our analysis as simple as possible, we only applied the techniques of Section \ref{sec:struct} to some concrete examples in $n=1$ spatial dimension. Nonetheless, there is no {\it a priori} reason why the techniques in Section \ref{sec:struct} could not be extended to higher dimensions in future.

Whilst our method is designed for application to models of {\it nonlocal} advection, for which there are existence and regularity results \citep{HPLG21}, it works by examining the {\it local} limit of stable solutions.  The reason for this is that these solutions are piecewise constant, so we can constrain our search for the minimum energy, enabling minimisers to be found analytically.  The disadvantage is that the local limit of stable solutions is not itself the steady state solution of a well-posed system of PDEs: in the local limit, Equation (\ref{eq:system}) becomes ill-posed.  More precisely, it is unstable to arbitrarily high wavenumbers whenever the pattern formation matrix has eigenvalues with positive real part.  Nonetheless, we have shown that the local limit of minimum energy solutions to the nonlocal problem is a useful object to study, even if it may not itself be the steady state solution of a system of PDEs.  

It would be cleaner, however, if we were able to develop theory that did not require taking this local limit.  For $N=1$, \cite{potts2021stable} developed techniques that are analogous to the ones proposed here but in discrete space.  In this case, the actual stable steady states of the discrete space system become amenable to analysis via an energy functional approach similar to the one proposed here.  However, generalisations of this technique to $N>1$ do not appear to be trivial from our initial investigations.  

Another possible way forward is to use perturbation analysis, starting with the minimum energy solutions from the local limit, studied here, and perturbing them to give solutions to the full nonlocal system.  One could then minimise the energy across this class of perturbed solutions (which would no longer be piecewise constant) to find stable steady states of the nonlocal system in Equation (\ref{eq:system}).  This is quite a nontrivial extension of the present methods, which we hope to pursue in future work.  One possible avenue might be to use a kernel that allows the non-local model to be transformed into a higher-order local model \citep{bennett2019long, ellefsen2021equilibrium}.

%[Something here about bifurcations; relating things like weakly non-linear analysis to the multi-stable situations we are seeing...]
Figures \ref{fig:BifurcationDiagramHysteresis2}, \ref{fig:MutualAttraction}, \ref{fig:BD}, and  \ref{fig:BD_mutual_avoidance_self_attraction} show numerical bifurcation analysis of our system in certain examples.  This naturally leads to questions about the nature of these bifurcations.  In particular, the discontinuity in amplitude that occurs as the constant steady state loses stability is something that is also seen with subcritical pitchfork bifurcations.  In this case, the stable branches may be joined to one another by an unstable branch, or some more complicated structure. It would be valuable to investigate analytically whether this is the case. Standard tools include weakly non-linear analysis and Crandall-Rabinowitz bifurcation theory, both of which have been used successfully for nonlocal advection-diffusion equations \citep{buttenschon2021non, eftimie2009weakly}.

%[Multiple `top-hats' and so forth]

The system we study assumes that species advect in response to the population density of other species.  However, it is agnostic as to the precise mechanisms underlying this advection.  Previous studies show that Equation (\ref{eq:system}) can be framed as a quasi-equilibrium limit of various biologically-relevant processes, such as scent marking or memory \citep{potts2016memory, potts2016territorial, PL19}.  This quasi-equilibrium assumption says, in effect,  that the scent marks or memory map stabilise quickly compared to the probability density of animal locations. However, it would be valuable to examine the extent to which these processes might affect the emergent patterns away from this quasi-equilibrium limit.  Along similar lines, it would also be valuable to examine the extent to which our results translate to the situation where we model each individual as a separate entity, as in an individual based model (IBM), rather than using a population density function, which is a continuum approximation of an IBM.  We have recently begun developing tools for translating PDE analysis to the situation of individual based models, which could be useful for such analysis \citep{potts2022beyond}.

In summary, we have developed novel methods for finding nontrivial steady states in a class of nonlinear, nonlocal PDEs with a range of biological applications.  As well as revealing complex multi-stable structures in examples of these systems, our study opens the door to various questions regarding the bifurcation structure, the effect of nonlocality, and the structure of the attractor.  We believe these will lead to yet more significant, but highly fruitful, future work.

\section*{Declarations}
\bmhead{Competing interests} 
T Hillen and MA Lewis are Editors-in-Chief of the {\em Journal of Mathematical Biology}.  Other than this, the authors have no competing interests to declare that are relevant to the content of this article. %\textcolor{blue}{NB: Where an Editor or Editorial Board Member is on the author list they must declare this in the competing interests section on the submitted manuscript}

\bmhead{Author contributions}
JR Potts led the conception and design of the study, with input from MA Lewis and T Hillen.  V Giunta led the mathematical and numerical analysis, with input from all authors.  The first draft of the manuscript was written by V Giunta and all authors commented on previous versions of the manuscript. All authors read and approved the final manuscript.

\backmatter

\bmhead{Acknowledgments}

JRP and VG  acknowledge  support of Engineering  and  Physical  Sciences  Research  Council (EPSRC) grant EP/V002988/1 awarded to JRP. VG is also grateful for support from the National Group of Mathematical Physics (GNFM-INdAM). TH is grateful for support from the Natural Science and Engineering Council of Canada (NSERC) Discovery Grant RGPIN-2017-04158. MAL  gratefully acknowledges support from NSERC Discovery Grant RGPIN-2018-05210 and the Canada Research Chair program.

%%===================================================%%
%% For presentation purpose, we have included        %%
%% \bigskip command. please ignore this.             %%
%%===================================================%%
%\bigskip
%\begin{flushleft}%
%Editorial Policies for:
%
%\bigskip\noindent
%Springer journals and proceedings: \url{https://www.springer.com/gp/editorial-policies}
%
%\bigskip\noindent
%Nature Portfolio journals: \url{https://www.nature.com/nature-research/editorial-policies}
%
%\bigskip\noindent
%\textit{Scientific Reports}: \url{https://www.nature.com/srep/journal-policies/editorial-policies}
%
%\bigskip\noindent
%BMC journals: \url{https://www.biomedcentral.com/getpublished/editorial-policies}
%\end{flushleft}

\begin{appendices}

		\section{Calculations for Figure \ref{fig:InstabilityRegionsComplete}}
		\label{appendix:A}
		
		{\noindent}Here, we give details of the calculations performed to produce the plots in Figure \ref{fig:InstabilityRegionsComplete} from Section \ref{sec:gammaneq0}.  The analysis is similar to that in Sections \ref{sub:1} and \ref{sub:2}, but unlike Sections \ref{sub:1} and \ref{sub:2} we drop the assumption that $\gamma_{11}=\gamma_{22}=0$ and we keep the assumption $\gamma_{12}=\gamma_{21}$. 
				
		We will look for the local minimizers of the following energy functional, where $K=\delta$,
		\begin{linenomath*}\begin{equation}\label{eq:energyA}
				E[u_1, u_2]=\int_{\mathbb{T}} \sum_{i=1}^{2} u_i \left(D_i \text{ln}(u_i)+\frac{1}{2}\sum_{j=1}^{2}\gamma_{i j} u_j\right) dx
		\end{equation}\end{linenomath*}		
		in the class of piece-wise constant functions defined as 
			\begin{linenomath*}\begin{align}
				\label{eq:uiA}
				u_i (x)=\begin{cases}
					u_{i}^c, &\mbox{for $x \in S_{i}$}, \\
					0, & \mbox{for $x \in [0,L]$\textbackslash$S_{i}$},
				\end{cases}
		\end{align}\end{linenomath*}
		where $ u_{i}^c \in \mathbb{R}^{+} $ and $S_{i}$ are subsets of $[0,L]$, for $i\in\{1,2\}$. 
			
Recall that, by Equation (\ref{eq:int_cond}), in Equation \eqref{eq:uiA} we require the following constraint 
		\begin{linenomath*}\begin{equation}\label{uicA}
				u_{i}^c\lvert S_{i}\lvert  =p_i, \text{ for }i=1,2.
		\end{equation}\end{linenomath*}
		Placing Equation \eqref{eq:uiA} into Equation \eqref{eq:energyA} gives 
		\begin{linenomath*}\begin{align}\nonumber
				E[u_1 ,u_2 ]=&\int_0^L  \left[\sum_{i=1}^{2}\left( D_i u_{i}  \text{ln}(u_{i} )+\frac{1}{2}\gamma_{ii}u_{i} ^2\right)+\gamma_{12} u_{1}  u_{2} \right]  dx \nonumber \\\nonumber
				=&\sum_{i=1}^{2} \lvert S_{i}\lvert \left(   D_i u_{i}^c \text{ln}(u_{i}^c)  +\frac{1}{2}\gamma_{ii}  (u_{i}^c)^2\right)+ \gamma_{12} u_{1}^c u_{2}^c \lvert S_{1} \cap S_{2}\lvert \\ \label{eq:energylocA}
				=&	\sum_{i=1}^{2}  p_{i}\left(D_i \text{ln}(u_{i}^c)  +\frac{1}{2}\gamma_{ii} u_{i}^c\right) +  \gamma_{12} u_{1}^c u_{2}^c \lvert S_{1} \cap S_{2}\lvert ,
		\end{align}\end{linenomath*}	
		where the first equality uses $\gamma_{12}=\gamma_{21}$ and the third equality uses Equation \eqref{uicA}.	
		
		Since the general analysis of this case is not straightforward, we instead set $ \gamma_{1 1}=\gamma_{2 2} $ and fix the other parameter values as $ p_1=p_2=D_1=D_2=L=1 $. Therefore Equation \eqref{eq:energylocA} becomes
		\begin{linenomath*}\begin{align}\label{eq:energylocA1}
				E[u_1 ,u_2 ]=& \text{ln}(u_{1}^c)+ \text{ln}(u_{2}^c)  +\frac{1}{2}\gamma_{11} (u_{1}^c+u_{2}^c) +  \gamma_{12} u_{1}^c u_{2}^c \lvert S_{1} \cap S_{2}\lvert.
		\end{align}\end{linenomath*}
		In the following, we will look for the minimizers of Equation \eqref{eq:energylocA1} and examine different cases demarcated by the signs of $\gamma_{11}$ and $\gamma_{12}$.
		
		\subsection{Self avoidance ($\mathbf{\gamma_{11}>0}$) and mutual avoidance ($\mathbf{\gamma_{12}>0}$) }\label{appendix:A1}
		
{\noindent}Since $ \gamma_{12}>0 $, in Equation \eqref{eq:energylocA1} if we keep $ \lvert S_{1}\lvert  $ and $ \lvert S_{2}\lvert $ fixed whilst lowering $ \lvert S_{1} \cap S_{2}\lvert  $ then the energy decreases. Thus, whenever $ \lvert S_{1}\rvert+\lvert S_{2}\rvert  \leq L =1 $ we can choose disjoint sets $S_{1}$ and $S_{2}$ that will correspond to lower energy solutions than any pair of non-disjoint sets of equal measure.  Furthermore, if $ \lvert S_{1}\rvert+\lvert S_{2}\rvert  > 1 $, we can construct sets $S_{1}$ and $S_{2}$, such that $\lvert S_1\cap S_2\rvert=\lvert S_1\rvert+\lvert S_2\rvert-1$ and these will correspond to lower energy solutions than any other pair of sets of equal measure. Therefore, when $ \lvert S_{1}\rvert+\lvert S_{2}\rvert  \leq 1 $, we will assume that $S_{1} \cap S_{2}=\emptyset$, and when $ \lvert S_{1}\rvert+\lvert S_{2}\rvert  > 1 $, we will assume that $\lvert S_1\cap S_2\rvert=\lvert S_1\rvert+\lvert S_2\rvert-1$ (as in Section \ref{sec:analytic1}). 

To search for the local minimizers of the energy in Equation \eqref{eq:energylocA1}, we then define
\begin{linenomath*}\begin{align} 
		\mathcal{E}(u_1^c,u_2^c)=
		\begin{cases}
			\text{ln}(u_{1}^c)+ \text{ln}(u_{2}^c)  +\frac{1}{2}\gamma_{11} (u_{1}^c+u_{2}^c), & \text{ if }  \lvert S_1\rvert  + \lvert S_2\rvert  \leq 1, \\ \\
		\text{ln}(u_{1}^c)+ \text{ln}(u_{2}^c)  +\frac{1}{2}\gamma_{11} (u_{1}^c+u_{2}^c) \\\quad + \gamma_{12} u_{1}^c u_{2}^c \lvert S_{1} \cap S_{2}\rvert, & \text{ if } \lvert S_1\rvert  + \lvert S_2\rvert  > 1.
		\end{cases}
		\label{eq:mathcalEA}
\end{align}\end{linenomath*}	
To constrain our search, notice that Equation \eqref{uicA}, $ p_i=1 $ and $ \lvert S_i\rvert  \leq L =1 $ imply that
\begin{linenomath*}\begin{align}
		\label{ssssicA}
		u_i^c\geq 1, \text{ for } i=1,2.
\end{align}\end{linenomath*}
We analyse $\mathcal{E}(u_1^c,u_2^c)$ (Equation \eqref{eq:mathcalEA}) under the constraint in Equation \eqref{ssssicA}, first in the region where $ \lvert S_1\rvert  + \lvert S_2\rvert  \leq 1 $ and then in the region where $ \lvert S_1\rvert  + \lvert S_2\rvert  > 1 $. By combining these results we will have a complete picture of the local minima of $\mathcal{E}(u_1^c,u_2^c)$. 
		
Note that by Equation \eqref{uicA}, the case $ \lvert S_1\rvert  + \lvert S_2\rvert  \leq 1$ is equivalent to
\begin{linenomath*}\begin{equation}\label{eq:constrA}
		\frac{1}{u_1^c}+\frac{1}{u_2^c}\leq 1.
\end{equation}\end{linenomath*}
By analysing the partial derivatives of $\mathcal{E}(u_1^c,u_2^c)$ in the region of the $(u_1^c,u_2^c)$-plane defined by Equation \eqref{eq:constrA}, one can check that there are no local minima in this region.  Furthermore, $\mathcal{E}(u_1^c,u_2^c)\rightarrow \infty$ as either $u_1^c \rightarrow \infty$ or $u_2^c\rightarrow \infty$.  Therefore any minima in this region must lie on the boundary, $ {1}/{u_1^c}+{1}/{u_2^c}= 1 $ (solid black line in Figure \ref{fig:Diagram}). Analysis of the partial derivative of $\mathcal{E}(u_1^c,u_2^c)$ on this boundary shows that $ \mathcal{E}(u_1^c,u_2^c)$ has a unique minimum point, given by
\begin{linenomath*}\begin{equation}\label{eq:minimumA}
		\mathcal{M}_S=(u_{1S}^c, u_{2S}^c):=\left(2,2\right).
\end{equation}\end{linenomath*}
This is also a local minimum of the region defined by Equation \eqref{eq:constrA}. This can be shown by performing a Taylor expansion of $ \mathcal{E}(u_1^c,u_2^c) $ about the point $\mathcal{M}_S$. Since the slope of the line tangent to the curve $ {1}/{u_1^c}+{1}/{u_2^c}= 1 $ in $\mathcal{M}_S$ is $-1$, we choose two constant, $\epsilon$ and $\delta$, such that $\epsilon+\delta\geq0$ and the Taylor expansion gives
\begin{linenomath*}\begin{align}\nonumber
		\mathcal{E}(2+\epsilon,2+\delta)& \approx    \mathcal{E}(2,2)+ \partial_{u_1^c} \mathcal{E}(2,2) \epsilon + \partial_{u_2^c} \mathcal{E}(2,2)\delta\\\nonumber
		&= \mathcal{E}(2,2)+ \frac{1}{2}(1+\gamma_{1 1})(\epsilon + \delta) \\\nonumber
		& \geq \mathcal{E}(2,2),
\end{align}\end{linenomath*}
where the inequality uses $ \gamma_{1 1}>0 $, $\epsilon+\delta\geq0$. 

However, since the point $ \mathcal{M}_S $ lies on the boundary curve $ \lvert S_1\rvert  + \lvert S_2\rvert = 1 $, we do not yet know whether it is a minimum for the whole admissible region defined by Equation \eqref{ssssicA} (white region in Figure \ref{fig:Diagram}).  To this end, we examine whether $ \mathcal{M}_S $ is a minimum of $ \mathcal{E}(u_1^c,u_2^c)$ (Equation \eqref{eq:mathcalEA}) in the region where $ \lvert S_1\lvert  + \lvert S_2\rvert  > 1 $. By Equation \eqref{ssssicA}, the condition $ \lvert S_1\lvert  + \lvert S_2\rvert  > 1 $ is equivalent to $ {1}/{u_1^c}+{1}/{u_2^c}> 1 $.  Therefore we have the following constraints
\begin{linenomath*}\begin{align}
		\frac{1}{u_1^c}+\frac{1}{u_2^c}&> 1, \nonumber \\
		u_i^c&\geq 1, \text{ for } i=1,2. 
		\label{eq:constr2A}
\end{align}\end{linenomath*}
Since $\lvert S_1 \cap S_2\rvert  = \lvert S_1\rvert  + \lvert S_2\rvert  - 1 $, when $\lvert S_1\rvert  + \lvert S_2\rvert  > 1$ the function $\mathcal{E}({u}_{1}^c,{u}_{2}^c) $ (Equation \eqref{eq:mathcalEA}) can be rewritten as
\begin{linenomath*}\begin{align} \nonumber
		\mathcal{E}(u_1^c,u_2^c) & = \text{ln}(u_{1}^c)+ \text{ln}(u_{2}^c)  +\frac{1}{2}\gamma_{11} (u_{1}^c+u_{2}^c)+  \gamma_{12}  u_1^c u_2^c \lvert S_1 \cap S_2\lvert \\ \nonumber
		&=	\text{ln}(u_{1}^c)+ \text{ln}(u_{2}^c)  +\frac{1}{2}\gamma_{11} (u_{1}^c+u_{2}^c) +  \gamma_{12}  u_1^c u_2^c (\lvert S_1\lvert  + \lvert S_2\lvert  -1),
		\\ 
		&=	\text{ln}(u_{1}^c)+ \text{ln}(u_{2}^c)  +\frac{1}{2}\gamma_{11} (u_{1}^c+u_{2}^c)+ \gamma_{12}  u_1^c u_2^c \left(\frac{1}{u_1} + \frac{1}{u_2} -1\right) ,
		\label{eq:mathcalEsA}
\end{align}\end{linenomath*}
where the third equality uses $\lvert S_i\lvert =\frac{1}{u_i^c}$.

To verify whether $ \mathcal{M}_S $ is also a minimum on the part of the domain given by Equation \eqref{eq:constr2A}, we perform a Taylor expansion of $\mathcal{E}(u_1^c,u_2^c)$ in a neighbourhood of $ \mathcal{M}_S $ within the region $1/u_1^c+1/u_2^c<1$. Since the slope of  the tangent line to the curve $1/u_1^c+1/u_2^c  = 1$ at the point  $ \mathcal{M}_S $  is $ -1 $, we choose two arbitrary constants, $\epsilon$ and $\delta$, such that $ \epsilon+ \delta\leq 0$.  Then Taylor expansion of $\mathcal{E}(u_1^c,u_2^c)$ is
\begin{linenomath*}\begin{align}\nonumber
		\mathcal{E}(2+\epsilon,2+\delta)& \approx    \mathcal{E}(2,2)+ \partial_{u_1^c} \mathcal{E}(2,2) \epsilon + \partial_{u_2^c} \mathcal{E}(2,2)\delta \\\nonumber
		& = \mathcal{E}(2,2)+\frac{1}{2}(1+ \gamma_{11}-2\gamma_{12})(\epsilon+\delta)\\ 
		& \geq \mathcal{E}(2,2),
\end{align}\end{linenomath*}
if $ \gamma_{11} < 2\gamma_{12}-1 $, where the inequality uses $ \epsilon+ \delta\leq 0$. 		
%	 Also, by using the parameter values the constraints from Equation \eqref{eq:constr2} are
%	\begin{linenomath*}\begin{align}
%			\frac{1}{u_1^c}+\frac{1}{u_2^c} &> 1, \nonumber \\
%			u_i^c &\geq 1, i=1,2. 
%			\label{region_ui}
%	\end{align}\end{linenomath*}

Next, we look for any other minima in the region defined by Equation \eqref{eq:constr2A}.  By analysing first partial derivatives, one can show that there are no local minima of $ \mathcal{E}(u_1^c,u_2^c) $ (Equation \eqref{eq:mathcalEsA}) in the interior of this region.  Therefore any local minima must occur on the boundaries.  On the part of the boundary given by $ u_i^c =1 $, for $ i=1,2 $, there is a unique minimum at 
\begin{linenomath*}\begin{equation}\label{eq:minimum2A}
		\mathcal{M}_H=(u_{1H}^c, u_{2H}^c):=\left(1,1\right).
\end{equation}\end{linenomath*}
This is also a local minimum of the region defined by Equation \eqref{eq:constr2A}.  This can be shown by performing a Taylor expansion of $ \mathcal{E}(u_1^c,u_2^c) $ about the point $\mathcal{M}_H$, to give
\begin{linenomath*}\begin{align}\nonumber
		\mathcal{E}(1+\epsilon,1+\delta)& \approx    \mathcal{E}(1,1)+ \partial_{u_1^c} \mathcal{E}(1,1) \epsilon + \partial_{u_2^c} \mathcal{E}(1,1)\delta\\\nonumber
				&= \mathcal{E}(1,1)+ \left(1+\frac{1}{2}\gamma_{11}\right) (\epsilon +\delta) \\ \nonumber
				& \geq \mathcal{E}(1,1),
\end{align}\end{linenomath*}
where the inequality uses $ \gamma_{1 1}>0 $, $\epsilon\geq0$ and $\delta\geq0$. Here, $\epsilon$ and $\delta$ are chosen to be non-negative so that we remain in the $ u_i^c \geq 1 $ region (Figure \ref{fig:Diagram}).
Therefore, if  $ \gamma_{11} > 2\gamma_{12}-1 $, $ \mathcal{E}(u_1^c,u_2^c) $ (Equation \eqref{eq:energylocA1}) has a unique minimum, given by $\mathcal{M}_H$.  Whilst if $ 0<\gamma_{11} < 2\gamma_{12}-1 $, then $ \mathcal{E}(u_1^c,u_2^c) $ has two local minima, given by $\mathcal{M}_H$ and $\mathcal{M}_S$.
		
%	\textcolor{hillencolor}{(TH2): All these conditions should be collected into a Lemma} 
Finally, we write down the functions $u_i(x)$ (Equation \eqref{eq:uiA}) which locally minimize the energy $E[u_1 ,u_2 ]$ (Equation \eqref{eq:energylocA}). If $(u_1^c,u_2^c)=\mathcal{M}_H$ then $u_1 (x)=u_2 (x)=1$, the homogeneous steady state, which we denote by $\mathcal{S}_H$.  If $(u_1^c,u_2^c)=\mathcal{M}_S$ then 
\begin{linenomath*}\begin{align}\label{eq:St2}
		u_i (x)=\begin{cases}
			2, &\mbox{for $x \in S_i$} \\
			0, & \mbox{for $x \in [0,1]$\textbackslash$S_i$},
		\end{cases}
\end{align}\end{linenomath*}
with $ \lvert S_i\rvert  =1/2 $, for $ i=1,2 $, and $ \lvert S_1  \cap S_2\rvert =0 $, denoted by $\mathcal{S}_S^{2,2}$.
		
In conclusion, if  $ \gamma_{11} > 2\gamma_{12}-1 $, the energy $E(u_1^c,u_2^c)$ (Equation \eqref{eq:energylocA}) has a unique minimum, given by $\mathcal{S}_H$.  However, if $ 0<\gamma_{11} < 2\gamma_{12}-1 $ the energy has two local minima, given by $\mathcal{S}_H$ and $\mathcal{S}_S^{2,2}$. Furthermore, linear stability analysis (Equation \eqref{eq:turingcondition}) suggests that when $\alpha$ tends to zero, the homogeneous steady state is stable if $\gamma_{11} > \gamma_{12}-1  $.  This gives rise to the diagram of analytically-predicted steady states given by the red and black lines in Figure \ref{fig:BD_mutualself_avoidance}.
		
		\subsection{Mutual attraction ($\mathbf{\gamma_{12}<0}$) }
		
		{\noindent}In this section, we analyze the local minimizers of the energy (Equation \eqref{eq:energylocA}) for $\gamma_{12}<0$, $\gamma_{11} \in \mathbb{R}$ and $\gamma_{12}=\gamma_{21}$.  We observe that the energy in Equation \eqref{eq:energylocA} decreases as $ \lvert S_{1} \cap S_{2} \lvert  $ increases, whilst keeping everything else constant. Therefore if we keep $\lvert S_1\lvert$ and $\lvert S_2 \lvert $ unchanged, then $\lvert S_1\cap S_2\lvert $ is maximised when either $S_1 \subseteq S_2$ or $S_2 \subseteq S_1$, so that $\lvert S_1\cap S_2 \lvert =\min_i\lvert S_i \lvert$. Thus by repeating the same argument presented in Section \ref{sec:analytic2} for $ \gamma_{12}<0 $ and $\gamma_{11}=0$, we see that $E[u_1 ,u_2 ] \rightarrow -\infty$ as $\min\{u_1^c,u_2^c\} \rightarrow \infty$. As we approach this limit, $u_1^c, u_2^c$ become arbitrarily large, so $u_1 $ and $ u_2 $ (Equation \eqref{eq:uiA}) become arbitrarily high, arbitrarily narrow functions with overlapping support. We will denote the limit of this solution by $ \mathcal{S}_A^{\infty} $.
		
		One can also show, using a very similar argument to Appendix \ref{appendix:A1} (details omitted), that the homogeneous steady state, $\mathcal{S}_H$, is the only other possible local minimiser of the energy that satisfies $u_i^c \geq 1$, for $i=1,2$, and this is only a local minimum when $\gamma_{12}> -\gamma_{11}-2$. However, linear stability analysis (Equation \eqref{eq:eigenvalues}) suggests that, in the limit as $\alpha$ tends to zero, the homogeneous steady state is linearly stable only if $\gamma_{12}  > -\gamma_{11}-1$. Therefore, any time  $\mathcal{S}_H$ is linearly stable, it is also a local energy minimiser within the set of functions given by Equation (\ref{eq:uiA}).
		%Notice that $E[u_1 ,u_2 ]  $ is minimized by $ \mathcal{S}_A^{\infty} $ whenever $\gamma_{12}$ is negative, regardless of its magnitude. However, linear stability analysis (Equation \eqref{eq:eigenvalues})	suggests that as $K$ becomes arbitrarily narrow (e.g. $\alpha\rightarrow 0$), the homogeneous steady state is linearly stable only when 		
	%	\begin{linenomath*}\begin{equation}
	%			\gamma_{11} > -\gamma_{12} - 1.
	%	\end{equation}\end{linenomath*}
		These results give rise to the diagram of analytically-predicted steady states given by the red and black lines in Figures \ref{fig:BD_mutual_attraction_self_avoidance}-\ref{fig:BD_mutualself_attraction}.
		
		\subsection{Self attraction ($\mathbf{\gamma_{11}<0}$) and mutual avoidance ($\mathbf{\gamma_{12}>0}$) }
		
{\noindent}By following the same argument of Appendix \ref{appendix:A1}, to search for the local minimizers of the energy in Equation \eqref{eq:energylocA1}, we define
\begin{linenomath*}\begin{equation} 
	\mathcal{E}(u_1^c,u_2^c)=
	\begin{cases}
		\text{ln}(u_{1}^c)+ \text{ln}(u_{2}^c)  +\frac{1}{2}\gamma_{11} (u_{1}^c+u_{2}^c), & \text{ if }  \lvert S_1\rvert  + \lvert S_2\rvert  \leq 1, \\ \\
		\text{ln}(u_{1}^c)+ \text{ln}(u_{2}^c)  +\frac{1}{2}\gamma_{11} (u_{1}^c+u_{2}^c) \\\quad + \gamma_{12} u_{1}^c u_{2}^c \lvert S_{1} \cap S_{2}\rvert, & \text{ if } \lvert S_1\rvert  + \lvert S_2\rvert  > 1.
	\end{cases}
	\label{eq:mathcalEA1}
\end{equation}\end{linenomath*}	
We analyse $\mathcal{E}(u_1^c,u_2^c)$ (Equation \eqref{eq:mathcalEA1}) under the constraint
\begin{linenomath*}\begin{align}
		\label{ssssicA1}
		u_i^c\geq 1, \text{ for } i=1,2,
\end{align}\end{linenomath*}
first when $ \lvert S_1\rvert  + \lvert S_2\rvert  \leq 1 $ and then when $ \lvert S_1\rvert  + \lvert S_2\rvert  > 1 $. Recall that the condition in Equation \eqref{ssssicA1} is obtained by Equation \eqref{uicA}, using $ p_i=1 $ and $ \lvert S_i\rvert  \leq L =1 $.
		
When $ \lvert S_1\rvert  + \lvert S_2\rvert  \leq 1 $, $\mathcal{E}(u_1^c,u_2^c)\rightarrow -\infty$ as either $u_1^c \rightarrow \infty$ or $u_2^c\rightarrow \infty$. As we approach this limit, $u_1^c, u_2^c$ become arbitrarily large, so the functions $u_1(x) $ and $ u_2(x) $ (Equation \eqref{eq:uiA}) become arbitrarily high, arbitrarily narrow functions with $\lvert S_1 \cap S_2 \rvert= \emptyset$. We denote the limit of this solution by $ \mathcal{S}_S^{\infty,\infty} $, in which the subscript $ S $ stands for aggregation and the $\infty,\infty$ superscript denotes that both $u_1(x) $ and $ u_2(x) $ become unbounded and separated as $u_1^c,u_1^c \rightarrow \infty$.
		
As discussed in Appendix \ref{appendix:A1}, $ \lvert S_1\rvert  + \lvert S_2\rvert  \leq 1 $ is equivalent to the following condition
\begin{linenomath*}\begin{equation}\label{eq:constrA1}
	\frac{1}{u_1^c}+\frac{1}{u_2^c}\leq 1.
\end{equation}\end{linenomath*}
Thus, by analysing the partial derivatives of $\mathcal{E}(u_1^c,u_2^c)$ in the region of the $(u_1^c,u_2^c)$-plane defined by Equation \eqref{eq:constrA1}, one can check that there are no local minima in the interior of this region. Analysis of the partial derivative of $\mathcal{E}(u_1^c,u_2^c)$ on the boundary $1/{u_1^c}+1/{u_2^c}=1$ shows that $ \mathcal{E}(u_1^c,u_2^c)$ has a unique minimum point, given by
\begin{linenomath*}\begin{equation}\label{eq:minimumA1}
	\mathcal{M}_S=(u_{1S}^c, u_{2S}^c):=\left(2,2\right).
\end{equation}\end{linenomath*}
This is also a local minimum of the region defined by Equation \eqref{eq:constrA1} when $ \gamma_{11}>-1 $. This can be shown by performing a Taylor expansion of $ \mathcal{E}(u_1^c,u_2^c) $ about the point $\mathcal{M}_S$, to give
\begin{linenomath*}\begin{align}\nonumber
		\mathcal{E}(2+\epsilon,2+\delta)& \approx    \mathcal{E}(2,2)+ \partial_{u_1^c} \mathcal{E}(2,2) \epsilon + \partial_{u_2^c} \mathcal{E}(2,2)\delta\\\nonumber
		&= \mathcal{E}(2,2)+ \frac{1}{2}(1+\gamma_{1 1})(\epsilon + \delta) \\\nonumber
		& \geq \mathcal{E}(2,2),
\end{align}\end{linenomath*}
where the inequality uses $ \gamma_{1 1}>-1 $, $\epsilon+\delta\geq0$. We recall that $\epsilon+\delta \geq 0$ ensures that we remain in the $\lvert S_1\lvert +\lvert S_2\lvert \leq 1$ region (Figure \ref{fig:Diagram}).
		
Since the point $ \mathcal{M}_S $ lies on the boundary curve $ \lvert S_1\lvert  + \lvert S_2\rvert = 1 $, we have so far only established that when $ \gamma_{1 1}>-1 $, $\mathcal{M}_S$ is a minimum of $ \mathcal{E}(u_1^c,u_2^c)$ (Equation \eqref{eq:mathcalEA1}) in the region where $ \lvert S_1\rvert  + \lvert S_2\rvert \leq 1 $. We also need to show $\mathcal{M}_S$ is a minimum in the region where $ \lvert S_1\rvert  + \lvert S_2\rvert  > 1 $. By Equation \eqref{ssssicA}, the condition $ \lvert S_1\rvert  + \lvert S_2\rvert  > 1 $ is equivalent to $ {1}/{u_1^c}+{1}/{u_2^c}> L $.  Therefore we have the following constraints
\begin{linenomath*}\begin{align}
		\frac{1}{u_1^c}+\frac{1}{u_2^c}&> 1, \nonumber \\
				u_i^c&\geq 1, \text{ for } i=1,2. 
		\label{eq:constr2A1}
\end{align}\end{linenomath*}
As already shown in Appendix \ref{appendix:A1} (see Equation \eqref{eq:mathcalEsA}), when $\lvert S_1\rvert  + \lvert S_2\rvert  > 1$ the function $\mathcal{E}({u}_{1}^c,{u}_{2}^c) $ (Equation \eqref{eq:mathcalEA}) can be rewritten as
\begin{linenomath*}\begin{align} 
		\mathcal{E}(u_1^c,u_2^c) & =\text{ln}(u_1^c)+\text{ln}(u_2^c) + \frac{1}{2}\gamma_{11}  (u_1^c+u_1^c)+ \gamma_{12}  u_1^c u_2^c \left(\frac{1}{u_1} + \frac{1}{u_2} -1\right).
		\label{eq:mathcalEsA1}
\end{align}\end{linenomath*}
%	 Also, by using the parameter values the constraints from Equation \eqref{eq:constr2} are
%	\begin{linenomath*}\begin{align}
%			\frac{1}{u_1^c}+\frac{1}{u_2^c} &> 1, \nonumber \\
%			u_i^c &\geq 1, i=1,2. 
%			\label{region_ui}
%	\end{align}\end{linenomath*}
To show that $\mathcal{M}_S$ (Equation \eqref{eq:minimumA1}) is a minimum on the region of the domain given by Equation \eqref{eq:constr2A1}, we perform a Taylor expansion of $\mathcal{E}(u_1^c,u_2^c)$ (Equation \eqref{eq:mathcalEsA1}) around $\mathcal{M}_S$ within this region. Since the slope of the tangent line to the curve $1/u_1^c+1/u_2^c  = 1$ at the point  $ \mathcal{M}_S $  is $ -1 $, we choose two arbitrary constants, $\epsilon$ and $\delta$, such that $ \epsilon+ \delta\leq 0$.  The Taylor expansion is then
\begin{linenomath*}\begin{align}\nonumber
		\mathcal{E}(2+\epsilon,2+\delta) & \approx    \mathcal{E}(2,2)+ \partial_{u_1^c} \mathcal{E}(2,2) \epsilon + \partial_{u_2^c}\mathcal{E}(2,2)\delta\\\nonumber
		&=\mathcal{E}(2,2)+\frac{1}{2}(1+ \gamma_{11}-2\gamma_{12})(\epsilon +\delta)\\ 
		& \geq \mathcal{E}(2,2),
\end{align}\end{linenomath*}
if $ \gamma_{11} < 2\gamma_{12}-1 $. Therefore, $ \mathcal{M}_S $ (Equation \eqref{eq:minimumA1}) is a local minimum of $\mathcal{E}(u_1^c,u_2^c)$ (Equation \eqref{eq:mathcalEA1}) when $ -1<\gamma_{11} < 2\gamma_{12}-1 $. We recall that if $(u_1^c,u_2^c)=\mathcal{M}_S$ then the functions $u_i(x)$ (Equation \eqref{eq:uiA}) that locally minimize the energy $E[u_1 ,u_2 ]$ (Equation \eqref{eq:energylocA1}) correspond to the class of functions $ \mathcal{S}_{S}^{2,2} $ defined in Equation \eqref{eq:St2}.

Next we look for other local minima within the region of the domain given by Equation \eqref{eq:constr2A1}.  A direct calculation using partial derivatives shows that there are no local minima of $ \mathcal{E}(u_1^c,u_2^c)$ in the interior of this region.  We now verify whether local minima occur on the boundaries. On the part of the boundary given by $ u_i^c =1 $, for $ i=1,2 $, there is a local minimum at 
\begin{linenomath*}\begin{equation}\label{eq:minimum2A1}
		\mathcal{M}_H=(u_{1H}^c, u_{2H}^c):=\left(1,1\right).
\end{equation}\end{linenomath*}
This is also a local minimum of the region defined by Equation \eqref{eq:constr2A1} when $ \gamma_{11}>-2 $.  This can be shown by performing a Taylor expansion of $ \mathcal{E}(u_1^c,u_2^c) $ (Equation \eqref{eq:mathcalEsA1}) about the point $\mathcal{M}_H$, to give
\begin{linenomath*}\begin{align}\nonumber
		\mathcal{E}(1+\epsilon,1+\delta)& \approx    \mathcal{E}(1,1)+ \partial_{u_1^c} \mathcal{E}(1,1) \epsilon + \partial_{u_2^c} \mathcal{E}(1,1)\delta\\\nonumber
		&= \mathcal{E}(1,1)+ \left(1+\frac{1}{2}\gamma_{11}\right) (\epsilon +\delta) \\ \nonumber
		& \geq \mathcal{E}(1,1),
\end{align}\end{linenomath*}
where the inequality uses $ \gamma_{1 1}>-2 $, $\epsilon\geq0$ and $\delta\geq0$. Note that $\epsilon$ and $\delta$ are chosen to be non-negative so that we remain in the $ u_i^c \geq 1 $ region (Figure \ref{fig:Diagram}). We recall that if $(u_1^c,u_2^c)=\mathcal{M}_H$ then the functions $u_i(x)$ (Equation \eqref{eq:uiA}) that locally minimize the energy $E[u_1 ,u_2 ]$ (Equation \eqref{eq:energylocA1}) correspond to $\mathcal{S}_H$, the homogeneous steady state.
			
Notice also that on the boundary $u_{1}^c=1$, $\mathcal{E}(u_1^c,u_2^c)$  (Equation \eqref{eq:mathcalEsA1}) decreases as $u_2^c\rightarrow \infty$ and, analogously, on the boundary $u_{2}^c=1$,  $\mathcal{E}(u_1^c,u_2^c)$  (Equation \eqref{eq:mathcalEsA1}) decreases as $u_1^c\rightarrow \infty$. Therefore, by keeping $u_{i}^c=1$ fixed, for $i=1,2$, $\mathcal{E}(u_1^c,u_2^c)\rightarrow -\infty$  as $u_{j}^c \rightarrow \infty$, for $ j\neq i $. As we approach this limit, the function $u_j(x) $ (Equation \eqref{eq:uiA}) becomes an arbitrarily high function with an arbitrarily narrow support, while $u_i(x) $ (Equation \eqref{eq:uiA}), for $i\neq j$, remains at finite height. We denote the limit of these solutions by $\mathcal{S}_S^{1,\infty}$.

In conclusion: 
%\textcolor{hillencolor}{(TH2) I think it would be useful to collect all these conditions into a big Lemma. I mean all cases (A1), (A2), (A3) into one Lemma. Maybe post this Lemma in the main text under Figure 6, and refer the proof to the appendix? }
\begin{itemize}
	\item If $\gamma_{11}> 2\gamma_{12}-1$, then the energy $E(u_1^c,u_2^c)$ (Equation \eqref{eq:uiA}) has the following local minima: $\mathcal{S}_H$, $\mathcal{S}_S^{\infty,\infty}$ and $\mathcal{S}_S^{1,\infty}$.
	\item If $-1<\gamma_{11}<2\gamma_{12}-1$, the energy $E(u_1^c,u_2^c)$ (Equation \eqref{eq:uiA}) has the following local minima: $\mathcal{S}_H$, $\mathcal{S}_S^{\infty,\infty}$, $\mathcal{S}_S^{1,\infty}$ and $\mathcal{S}_S^{2,2}$.
	\item If $-2<\gamma_{11}<-1$, the energy $E(u_1^c,u_2^c)$ (Equation \eqref{eq:uiA}) has the following local minima: $\mathcal{S}_H$, $\mathcal{S}_S^{\infty,\infty}$ and $\mathcal{S}_S^{1,\infty}$.
	\item If $\gamma_{11}<-2$, the energy $E(u_1^c,u_2^c)$ (Equation \eqref{eq:uiA}) has the following local minima: $\mathcal{S}_S^{\infty,\infty}$ and $\mathcal{S}_S^{1,\infty}$.
\end{itemize}
Furthermore, linear stability analysis (Equation \eqref{eq:turingcondition}) suggests that when $\alpha$ tends to zero, the homogeneous steady state is stable if $\gamma_{11}> \gamma_{12}-1$.  This gives rise to the diagram of analytically-predicted steady states given by the red and black lines in Figure \ref{fig:BD_mutual_avoidance_self_attraction}.

\section{Details of calculations from Section \ref{sec:N2}}
\label{appendix:C}

{\noindent}Here, we analyze the solutions to the system $\left\{\det (A_1^{(2)})=0, \det (A_2^{(2)})=0\right\}$, where $A_1^{(2)}$ and $A_2^{(2)}$ are given in Equation \eqref{A1N2} and Equation \eqref{A2N2}, respectively. We write the system $\left\{\det (A_1^{(2)})=0, \det (A_2^{(2)})=0\right\}$ in full as
\begin{linenomath*}\begin{align}
		\label{eq:Asystem1}
			0&=(D_1+\gamma_{11}u_1 )(D_2+\gamma_{22}u_2 )-\gamma_{12}\gamma_{21}u_1 u_2 , \\
		\label{eq:Asystem2} \nonumber
			0&=\gamma_{12} u_1 \left(\gamma _{11} \left(D_2+\gamma _{22} u_2\right)-\gamma_{12}\gamma_{21} u_2\right)\\
			& \quad-\left(\gamma _{22} \left(D_1+\gamma _{11} u_1\right)-\gamma_{12}\gamma_{21} u_1\right) \left(D_1+\gamma _{11} u_1\right),
\end{align}\end{linenomath*}
By subtracting Equation \eqref{eq:Asystem2} from Equation \eqref{eq:Asystem1}, we obtain the following linear equation in $u_2$
\begin{linenomath*}\begin{align}\nonumber
		\gamma _{11} D_2 u_1-\gamma _{11} \gamma _{12} D_2 u_1-\gamma _{12} \gamma _{21} D_1 u_1+\gamma _{22} \left(D_1+\gamma _{11} u_1\right){}^2\\\label{eq:u2}+D_1 D_2-\gamma _{11} \gamma _{12} \gamma _{21} u_1^2 +
		u_2 (\gamma _{22} D_1+\left(\gamma _{12}-1\right) \left(\gamma _{12} \gamma _{21}-\gamma _{11} \gamma _{22}\right) u_1)=0
\end{align}\end{linenomath*}
By using Equation \eqref{eq:u2} to find $u_2$ in terms of $u_1$ and then substituting this into Equation \eqref{eq:Asystem1}, we obtain the following cubic equation in $u_1$
\begin{linenomath*}\begin{align}\nonumber
		\gamma _{22}^2 D_1^3 - D_1 u_1 \left(\gamma _{21} \gamma _{12}^2 D_2+\gamma _{22} \left(2 \gamma _{12} \gamma _{21}-3 \gamma _{11} \gamma _{22}\right) D_1\right) &\\\nonumber+ u_2^2 D_1 \left(\gamma _{12} \gamma _{21}-3 \gamma _{11} \gamma _{22}\right) \left(\gamma _{12} \gamma _{21}-\gamma _{11} \gamma _{22}\right) & \\\label{eq:u1} +u_1^3\gamma _{11} \left(\gamma _{12} \gamma _{21}-\gamma _{11} \gamma _{22}\right){}^2 &=0.
\end{align}\end{linenomath*}
Since Equation \eqref{eq:u1} has at most three roots,  System \eqref{eq:Asystem1}-\eqref{eq:Asystem2} has at most three solutions.

\end{appendices}

%%===========================================================================================%%
%% If you are submitting to one of the Nature Portfolio journals, using the eJP submission   %%
%% system, please include the references within the manuscript file itself. You may do this  %%
%% by copying the reference list from your .bbl file, paste it into the main manuscript .tex %%
%% file, and delete the associated \verb+\bibliography+ commands.                            %%
%%===========================================================================================%%

\bibliography{bib_O3}% common bib file
%% if required, the content of .bbl file can be included here once bbl is generated
%%\input sn-article.bbl

%% Default %%
%%\input sn-sample-bib.tex%

\end{document}